\documentclass[oneside]{article}
\usepackage[intlimits,reqno]{amsmath}
\usepackage{amsthm, graphics}
\usepackage{amssymb}
\usepackage{verbatim}
\include{diagxy}
\input{xy}
\xyoption{all}

\usepackage{enumerate}
\usepackage{amsmath}
\usepackage{verbatim}
\usepackage{bbm}
\newcommand{\norm}[1]{\left\Vert#1\right\Vert}

\newcommand{\No}{\mathbb{N}\cup\{0\}}

\newcommand{\N}{\mathbb N}

\newcommand{\A}{\mathcal{A}}
\newcommand{\B}{\mathcal{B}}

\newcommand{\J}{\mathcal{J}}

\renewcommand{\P}{\mathcal{P}}
\newcommand{\G}{\mathcal{G}}
\newcommand{\AG}{\mathcal{AG}}
\newcommand{\AU}{\mathcal{AU}}
\newcommand{\U}{\mathcal{U}}

\renewcommand{\H}{\mathcal{H}}

\newcommand{\Ss}{\textbf{s}}
\newcommand{\Tt}{\textbf{t}}

\newcommand{\M}{\mathfrak{M}}

\renewcommand{\to}{\rightarrow}

\newcommand{\too}{\rightrightarrows}
\newcommand{\prf}[1]{\begin{proof}#1\end{proof}}

\newcommand{\ol}{\overline}

\newcommand{\be}{\begin{equation}}
\newcommand{\ee}{\end{equation}}
\newcommand{\bse}{\begin{subequations}}
\newcommand{\ese}{\end{subequations}}
\newcommand{\ben}{\begin{enumerate}}
\newcommand{\een}{\end{enumerate}}
\newcommand{\bit}{\begin{itemize}}
\newcommand{\eit}{\end{itemize}}
\newcommand{\bex}{\begin{example}}
\newcommand{\eex}{\begin{flushright}$\diamondsuit$\end{flushright}\end{example}}

\vfuzz2pt 
\hfuzz2pt 
\newtheorem{thm}{Theorem}[section]
\newtheorem{cor}[thm]{Corrolary}
\newtheorem{lem}[thm]{Lemma}

\newtheorem{prop}[thm]{Proposition}

\theoremstyle{definition}

\newtheorem{example}{Example}
\numberwithin{example}{section}
\theoremstyle{remark}
\newtheorem{rem}[thm]{Remark}

\numberwithin{equation}{section}

\renewcommand{\emph}[1]{{\bf #1}}


\begin{document}
\title{Groupoids and inverse semigroups associated to $W^*$-algebras}
\author{Anatol Odzijewicz,\ Aneta Sli\.{z}ewska\\Institute of Mathematics\\
University in Bia{\l}ystok
\\Akademicka 2, 15-267 Bia{\l}ystok, Poland}

\maketitle
\tableofcontents
\vspace{1cm}
\begin{abstract}
We investigate the Banach Lie groupoids and inverse semigroups naturally associated
to $W^*$-algebras. We also present statements describing the relationship between
these groupoids and the Banach Poisson geometry which follows in the canonical way from the $W^*$-algebra structure.

\end{abstract}

\section{Introduction}

During recent decades
the notion of groupoid entered many branches of mathematics including topology  \cite{brown}, differential geometry in general \cite{mac2} and  Poisson geometry \cite{Wei} in particular  as well as the theory of operator algebras \cite{pater}. Let us recall shortly that  a groupoid
 is a small category all of whose morphisms are invertible.
In accordance with \cite{mac2} they are ``the natural formulation of a symmetry for objects which have bundle structure''.
Nevertheless  the role of groupoids is not so widely accepted as
that of groups.  On the other hand the theory of $W^*$-algebras (von Neumann algebras) occupies an outstanding place in  mathematics and mathematical physics \cite{sakai}.

 Motivated by the existence of the canonically   defined Banach Lie-Poisson structure on the predual $\M_*$ of any $W^*$-algebra $\M$, see \cite{OR}, and by the importance of this structure in the theory of infinite dimensional Hamiltonian systems, see \cite{OR2}, we clarify here some natural connections between Banach Poisson geometry and groupoid theory from one side and $W^*$-algebras from the other.

In Section 2 we show that the  structure of any  $W^*$-algebra $\M$ naturally defines
two important groupoids $\U(\M)$ and $\G(\M)$
the first of which consists of the partial isometries in $\M$
and the second, being the complexification of $\U(\M)$, consists of
the partially invertible elements of $\M$. In this section we also discuss canonical actions of $\G(\M)$ and $\U(\M)$ on the lattice of projections $\mathcal{L}(\M)$ and on the cone $\M^+_*$ of the positive normal states on $\M$. In the Theorem \ref{rep} we show that the action groupoid $\U(\M)*\M_*^+$ has the faithful representation on the GNS bundle $\ol{\mathbb{E}}\to \M_*^+$. The Theorem \ref{propLambda} shows that one can consider  $\U(\M)*\M_*^+$ as a subgroupoid of the groupoid of partial isometries $\U(\ol\rho(\M)')$ for the commutant $\ol\rho(\M)'$ of the $W^*$-representation $\ol{\rho}:\M\to L^\infty(L^2\Gamma(\mathbb{E},\M^+_*))$ of $\M$ in the Hilbert space of the square-summable sections for the bundle $\ol{\mathbb{E}}\to \M_*^+$.

In Section 3 we study inverse semigroups which are the subsets of the groupoid $\U(\M)$
for various type of $W^*$-algebras.
Such inverse semigroups form a class of subgroupoids of $\U(\M)$. In particular we describe the inverse semigroups related to matrix unit inverse semigroups, see Theorem \ref{3.3}, the Clifford inverse semigroups (Proposition \ref{centr}), and the monogenic inverse semigroups  (Theorem \ref{monogthm}). Finally we discuss the CAR inverse semigroup given by the canonical anticommutation relations which has fundamental meaning  in quantum physics.

The topology and Banach manifold structure of $\G(\M)$ and $\U(\M)$ are described in Section 4. We show here that $\G(\M)$ is not a topological groupoid with respect to any natural topology of $\M$. However the groupoid $\U(\M)$
 is a topological groupoid with respect to the uniform topology as well as the $s^*(\U(\M),\M_*)$-topology. Theorem \ref{4.3} describes the topological structure of the action groupoids related to $\U(\M)$. We investigate the complex Banach manifold structure on the lattice $\mathcal{L}(\M)$ and the groupoid $\G(\M)$ and show that $\G(\M)$ is a Banach-Lie groupoid with $\mathcal{L}(\M)$ as its base manifold, see Theorem \ref{BLgroupoidthm}. The last statement is also true for the groupoid $\U(\M)$ when we consider it as a real Banach manifold, and the groupoid  $\G(\M)$ is the complexification of $\U(\M)$, see Theorem \ref{real}.

In Section 5 we present several, essential in the present context, statements describing relationship between groupoids $\G(\M)$ and $\U(\M)$  and the canonical  Poisson
structure defined on the Banach vector bundles  $\A_*\G(\M)$ and $\A_*\U(\M)$ predual to the algebroids $\AG(\M)$ and $\AU(\M)$ respectively. From that we conclude that in the framework  of $W^*$-algebras theory there exists the natural illustration    of the deep ideas connecting finite dimensional Poisson geometry and Lie groupoids theory which was investigated in  \cite{kara}, \cite{mac2}, \cite{weinstein2}.

\section {Groupoids associated to $W^*$-algebras and their representations}

In this section we introduce  various groupoids   defined in a canonical way by  the
given $W^*$-algebra $\M$. We will also describe representations of these groupoids on vector bundles
related to the algebra $\M$ as well as to its predual $\M_*$. The basic facts from the theory of groupoids
one can find in appendix to this
paper. The detailed presentation of the groupoid theory one can find in \cite{mac2}. The part of the theory of $W^*$-algebras indispensable  for the subsequent investigations is given in \cite{sakai} and \cite{takesaki}.

 By $\mathcal{U}(\M)$ we shall denote the set of all partial
    isometries in $\M$, i.e. $u\in \mathcal{U}(\M)$ if and only if $u^*u\in \mathcal{L}(\M)$, where $\mathcal{L}(\M)$
    is the lattice of  orthogonal projections $p=p^*=p^2\in \M$. For $x\in \M$ one defines the
    left support $l(x)\in \mathcal{L}(\M)$ (respectively  right support $r(x)\in \mathcal{L}(\M)$) as the smallest projection
    in $\M$ such that  $l(x)x=x$ (respectively $x\ r(x)=x$). If $x=x^*$ then $l(x)=r(x)=:s(x)$ and one
    calls $s(x)$ the support of $x$. Let \be\label{polar} x=u|x|\ee
be the polar decomposition of $x$, where $u\in\mathcal{ U}(\M)$ and
$|x|\in\M^+:=\{x\in\M:\ \ x^*=x>0\}$, see \cite{sakai} . Then one has
\be\label{support}\begin{array}{c} l(x)=s(|x^*|)=uu^*,\\
r(x)=s(|x|)=u^*u.\end{array}\ee

 In this paper we will
denote by ${G}(p\M p) $ the group of all invertible elements
 of the  $W^*$-subalgebra $p\M p\subset \M$. In particular if $p=1$ then $G(\M)$ will be the group
 of all invertible elements of $\M$. Similarly by ${U}(p\M p)$ and ${U}(\M)$
 we will denote the groups of unitary elements of $p \M p$ and $\M$.

 For any $x\in \M$ one has $|x|\in p\M p$, where $p=s(|x|)$. Let us define the subset $\G(\M)\subset\M$ by
 \be\mathcal{G}(\M):=\{x\in \M:\ \ |x|\in G(p \M p), \ {\rm where } \ p=s(|x|)\}.\ee
 Equivalently, $x$ belongs to $\ \G(\M)$ if the left multiplication $L_{|x|}$ by $|x|$ defines a right $\M$-module isomorphism
 \be\label{leftisom} L_{|x|}:p\M\ \ \tilde{\to}\ \  p\M\ee
 of the right ideal $p\M$.

\begin{prop}
The subset $\mathcal{G}(\M)\subset \M$ has the canonical  structure of a
groupoid with  $\mathcal{L}(\M)$ as the base set. The groupoid structure of $
\mathcal{G}(\M)$  is defined as follows: \ben[(i)]
\item the identity section $\varepsilon: \mathcal{L}(\M)\hookrightarrow \mathcal{G}(\M)$ is the inclusion;
\item the source and target maps: $\Ss,\Tt:\mathcal{G}(\M)\rightarrow \mathcal{L}(\M)$ are defined  by
\be\label{moment} \Ss(x):=r(x)=u^*u\ \ \  {\rm and} \ \ \  \Tt(x):=l(x)=uu^*;\ee
\item the product
\be\label{prod} \mathcal{G}(\M)^{(2)}\ \ni\ (x,y)\mapsto \ xy\ \in\mathcal{G}(\M)\
\ee on the set of composable pairs $$\mathcal{G}(\M)^{(2)}
:=\{(x,y)\in\mathcal{G}(\M)\times\mathcal{G}(\M);\ \ \Ss(x)=\Tt(y)\}$$  is given by
the product in the $W^*$-algebra $\M$;
\item the inverse map $\iota:\mathcal{G}(\M)\rightarrow \mathcal{G}(\M)$ is given by \be\label{inverse}\iota (x):=|x|^{-1}u^*, \ee
where $u$ and $|x|$ are defined in the unique way by the polar decomposition
(\ref{polar}). \een\end{prop}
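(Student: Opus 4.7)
My plan is to use the polar decomposition $x=u|x|$ as the organising tool and lean on the equivalent characterisation of $\G(\M)$ already given in the excerpt: namely $x\in\G(\M)$ iff $L_{|x|}\colon p\M\to p\M$ (with $p=s(|x|)$) is a right $\M$-module automorphism, which by $L_x=L_u\circ L_{|x|}$ is equivalent to $L_x$ restricting to a right $\M$-module isomorphism $r(x)\M\ \tilde{\to}\ l(x)\M$. Since right $\M$-submodules of $\M$ correspond bijectively to projections in $\mathcal{L}(\M)$ via $q\M\leftrightarrow q$, this reformulation will make the identification of $\Ss$, $\Tt$ on products painless.

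First I would verify that $\Ss$ and $\Tt$ are well defined: $r(x)=u^{*}u$ and $l(x)=uu^{*}$ are projections by the partial-isometry property of $u$, and \eqref{support} places them in $\mathcal{L}(\M)$. Next I would check that $\iota(x)=|x|^{-1}u^{*}$ belongs to $\G(\M)$ and that its polar decomposition is $\iota(x)=u^{*}\cdot(u|x|^{-1}u^{*})$: the element $u|x|^{-1}u^{*}$ is positive, lies in $l(x)\M l(x)$, and its inverse in that corner is $u|x|u^{*}$, while $u^{*}$ is a partial isometry with $(u^{*})^{*}u^{*}=uu^{*}=l(x)=s(u|x|^{-1}u^{*})$, so the support condition for polar decomposition is met. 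This gives $\Ss(\iota(x))=\Tt(x)$ and $\Tt(\iota(x))=\Ss(x)$. A direct calculation then yields the inverse axioms: $x\iota(x)=u|x|\cdot|x|^{-1}u^{*}=uu^{*}=l(x)=\varepsilon(\Tt(x))$ and $\iota(x)x=|x|^{-1}u^{*}u|x|=|x|^{-1}p|x|=p=r(x)=\varepsilon(\Ss(x))$, using $p|x|=|x|$. The identity axioms $l(x)x=x$ and $x\,r(x)=x$ are built into the definitions of left and right support.

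The main obstacle is closure under the product and the computation of source/target on composable pairs. For $(x,y)\in\G(\M)^{(2)}$ with polar decompositions $x=u|x|$, $y=v|y|$ and composability condition $u^{*}u=vv^{*}=:p$, I would compute
\[
(xy)^{*}(xy)=|y|v^{*}|x|u^{*}u|x|v|y|=|y|(v^{*}|x|^{2}v)|y|,
\]
so the problem reduces to showing $v^{*}|x|^{2}v$ is invertible in $q\M q$, where $q=v^{*}v=s(|y|)$. Since $|x|^{2}$ is invertible in $p\M p$ with inverse $k$, the element $v^{*}kv\in q\M q$ satisfies $(v^{*}|x|^{2}v)(v^{*}kv)=v^{*}|x|^{2}(vv^{*})kv=v^{*}|x|^{2}kv=v^{*}pv=q$, giving the required inverse. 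Combined with invertibility of $|y|$ in $q\M q$, this shows $|xy|^{2}$ is invertible in $q\M q$, hence $s(|xy|)=q=r(y)$ and $xy\in\G(\M)$. Thus $\Ss(xy)=r(y)=\Ss(y)$.

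To finish, I need $\Tt(xy)=\Tt(x)$. Here I would invoke the module-theoretic picture: $xy\,\M=l(xy)\M$ and $x\M=l(x)\M$, so it suffices to show these right ideals coincide. Using $y\M=v|y|\M=v\M=vv^{*}\M=r(x)\M$ (the middle equalities follow from $|y|$ being invertible in $r(y)\M r(y)$ and from $vv^{*}b=v(v^{*}b)$), we get $xy\,\M=x(r(x)\M)=x\M=l(x)\M$, whence $l(xy)=l(x)$. Associativity of the product and compatibility with composability are inherited from associativity in $\M$, completing the verification of the groupoid axioms.
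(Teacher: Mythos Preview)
Your argument is correct and complete; in fact it is more explicit on the closure issue than the paper itself. The route, however, is genuinely different. For the identity $\Ss(xy)=\Ss(y)$ you compute $(xy)^{*}(xy)=|y|\,(v^{*}|x|^{2}v)\,|y|$ and produce the inverse $v^{*}|x|^{-2}v$ by hand, thereby establishing $s(|xy|)=r(y)$ and closure in one stroke; the paper instead runs a lattice argument on supports: from $yx\,\Ss(x)=yx$ it gets $r(yx)\leq r(x)$, and then multiplying $yx\,r(yx)=yx$ on the left by $\iota(y)$ (using $\iota(y)y=r(y)=l(x)$) gives $x\,r(yx)=x$, hence $r(x)\leq r(yx)$. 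For $\Tt(xy)=\Tt(x)$ you pass through the right $\M$-module picture $z\M=l(z)\M$ for $z\in\G(\M)$, while the paper simply repeats the same minimality-of-support trick on the left. The trade-off: your polar-decomposition calculation makes the membership $xy\in\G(\M)$ completely transparent and shows exactly \emph{why} the inverse of $|xy|$ exists, whereas the paper's argument is shorter and symmetric but leaves closure implicit (it follows a posteriori from the two-sided inverse $\iota(y)\iota(x)$ and the equalities $r(xy)=r(y)$, $l(xy)=l(x)$).
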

 \prf{Since $\varepsilon:\mathcal{L}(\M)\rightarrow
\mathcal{G}(\M)$ is inclusion the maps $\Tt:\mathcal{G}(\M)\to \mathcal{L}(\M)$ and
$\Ss:\mathcal{G}(\M)\to \mathcal{L}(\M)$ are surjective.

From $x\Ss(x)=x$ one has $yx\Ss(x)=yx$. This gives $\Ss(yx)\leqslant \Ss(x)$,
where $" \leqslant "$ means lattice order in $\mathcal{L}(\M)$. Using
$r(y)=\iota(y)y=l(x)$ and $\iota(y)yx\ \Ss(yx)=\iota(y)yx$ we obtain $x\
\Ss(yx)=x$. Thus  we have $\Ss(x)\leqslant \Ss(yx)$. This shows that
$r(yx)=\Ss(yx)=\Ss(x)=r(x)$. In a similar way we show that $l(yx)=l(y)$.

The associativity of  the product (\ref{prod}) follows from the associativity of the
algebra  product.

  Using (\ref{polar}) and (\ref{support}) we get
\be\label{groupoid}\begin{array}{c}\iota (x)x=\varepsilon (\Ss(x)),\\
 x\iota(x)=\varepsilon(\Tt(x)),\\
\Ss(x)=\Tt(\iota(x)),\\
\Tt(x)=\Ss(\iota(x))\end{array}\ee
for $x\in \mathcal{G}(\M)$. The above proves the groupoid structure of $\G(\M)$.}

From now on we will call $\mathcal{G}(\M)$ the groupoid
 of partially invertible elements of the $W^*$-algebra $\M$.

Since \be\label{cos} |x^*|=u|x|u^*\ee we see  that the groupoid $\G(\M)$ is invariant with respect to $*$-involution. Thus from the definition of the inverse map $\iota:\G(\M)\to \G(\M)$ follows that the involution
$J:\G(\M)\to\G(\M)$ defined by
\be\label{J} J(x):=\iota(x)^*=\iota(x^*)\ee
is an automorphism of the groupoid $\G(\M)$. We note also that the set of fixed points of $J:\G(\M)\to\G(\M)$, i.e.
\be\label{parisom} \{u\in \G(\M):\quad J(u)=u\}\ee
is the set $\mathcal{U}(\M)$ of all partial isometries of the $W^*$-algebra $\M$. From the above  we conclude the following
\begin{prop}\label{cor1}
The set $\mathcal{U}(\M)$ of partial isometries in $\M$ is a wide subgroupoid
of the groupoid $\mathcal{G}(\M)$.
\end{prop}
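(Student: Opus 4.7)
The plan is to leverage the characterization already recorded in (\ref{parisom}): the set $\mathcal{U}(\M)$ of partial isometries coincides with the fixed-point set of the groupoid automorphism $J:\G(\M)\to\G(\M)$. The general principle I want to invoke is that the fixed-point set of a groupoid automorphism is automatically a subgroupoid, and becomes wide precisely when the automorphism acts trivially on the base. So the task splits into (a) verifying closure under multiplication and inversion via $J$-invariance, and (b) checking that $J$ fixes every projection.

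For (a), the key observation is that $J$ preserves multiplication (not reverses it): since $\iota(xy)=\iota(y)\iota(x)$ is a general groupoid identity and $(xy)^{*}=y^{*}x^{*}$, composing these two order-reversing maps gives $J(xy)=\iota(xy)^{*}=(\iota(y)\iota(x))^{*}=\iota(x)^{*}\iota(y)^{*}=J(x)J(y)$. Hence if $u,v\in\mathcal{U}(\M)$ are composable (i.e.\ $\Ss(u)=\Tt(v)$) then $J(uv)=J(u)J(v)=uv$, so $uv\in\mathcal{U}(\M)$. For inversion, an automorphism commutes with $\iota$, so $J(\iota(u))=\iota(J(u))=\iota(u)$, placing $\iota(u)$ back in $\mathcal{U}(\M)$.

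For (b), I need $\mathcal{L}(\M)\subset\mathcal{U}(\M)$, i.e.\ that every orthogonal projection is a partial isometry and is fixed by $J$. This is immediate: if $p=p^{*}=p^{2}$, then $p^{*}p=p\in\mathcal{L}(\M)$, so $p\in\mathcal{U}(\M)$ by definition. Alternatively, via $J$: the polar decomposition of $p$ is trivial ($p=p\cdot p$ with partial isometry $p$ and $|p|=p$), $|p|$ is its own inverse in $p\M p$, so $\iota(p)=p^{-1}p^{*}=p$, and $J(p)=p^{*}=p$. Consequently the identity section $\varepsilon(\mathcal{L}(\M))$ lies in $\mathcal{U}(\M)$, and the source/target maps of $\G(\M)$ restrict to surjections $\mathcal{U}(\M)\to\mathcal{L}(\M)$, witnessing that $\mathcal{U}(\M)$ is a \emph{wide} subgroupoid.

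I do not anticipate any real obstacle; the substantive work has already been done in identifying $\mathcal{U}(\M)$ as $\mathrm{Fix}(J)$ and in showing $J\in\Aut(\G(\M))$. The only point I would want to state carefully is the computation that $J$ preserves (rather than reverses) the groupoid product, since this is what makes the fixed-point argument clean and avoids redoing the partial-isometry closure of $uv$ by hand.
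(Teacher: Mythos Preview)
Your proposal is correct and follows precisely the route the paper intends: the paper does not give an explicit proof but simply writes ``From the above we conclude the following'', where ``the above'' is exactly the observation that $J$ is a groupoid automorphism and $\mathcal{U}(\M)$ is its fixed-point set (\ref{parisom}). Your write-up spells out the details the paper leaves implicit---in particular the verification that $J$ preserves rather than reverses the product, and that $J$ fixes $\mathcal{L}(\M)$ pointwise---so it is a faithful expansion of the paper's own argument.
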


The other important wide subgroupoid of $\G(\M)$ is the inner subgroupoid $\mathcal{J}(\M)\subset\G(\M)$ defined by
\be\label{isotropy}
\mathcal{J}(\M):=\bigcup_{p\in \mathcal{L}(\M)}(\Ss^{-1}(p)\cap \Tt^{-1}(p)). \ee
It is a totally intransitive subgroupoid and one can consider it as a bundle $s: \mathcal{J}(\M)\to \mathcal{L}(\M)$ of
groups $\Ss^{-1}(p)\cap \Tt^{-1}(p)=G(p\M p)$ indexed by $p\in \mathcal{L}(\M)$. One has also the action
$I:\mathcal{G}(\M)* \mathcal{J}(\M)\to \mathcal{J}(\M)$ of $\mathcal{G}(\M)$ on
$\mathcal{J}(\M)$  defined by \be\label{inner} I_x y:=xy\ \iota(x)\ee for $(x,y)\in
\mathcal{G}(\M)* \mathcal{J}(\M):=\{(x,y)\in\mathcal{G}(\M)\times
\mathcal{J}(\M): r(x)=s(y)\}$.  This action is called the inner action. Note that the moment map for the inner action  is the support map $s:\mathcal{J}(\M)\to\mathcal{L}(\M)$. Since for $y\in\mathcal{J}(\M)$ one has $s(y)=l(y)=r(y)$  one can consider the lattice of projections $\mathcal{L}(\M)$ as a subgroupoid  of $\J(\M)$ invariant  with
respect to the action $I:\mathcal{G}(\M)*\J(\M)\to \J(\M)$. So, the inner action
(\ref{inner}) defines the action $I:\mathcal{G}(\M) * \mathcal{L}(\M)\to \mathcal{L}(\M)$ of the
groupoid $\mathcal{G}(\M)$ on the lattice $\mathcal{L}(\M)$. The moment map for this
action is the identity map $id:\mathcal{L}(\M)\to \mathcal{L}(\M)$.

The groupoid structure of $\mathcal{G}(\M)$ allows us to define the principal bundles:
\be\label{bundles}\begin{array}{c} \Ss:\Tt^{-1}(p) \to \mathcal{O}_p\\
\Tt:\Ss^{-1}(p) \to \mathcal{O}_p\end{array}\ee over the orbit $\mathcal{O}_p:=\{xp\iota(x):\
x\in\Ss^{-1}(p)\}$ of the inner action $I:\mathcal{G}(\M)*\mathcal{L}(\M)\to \mathcal{L}(\M)$ of
$\mathcal{G}(\M)$ on the lattice $\mathcal{L}(\M)$. The structural group for the principal
bundles (\ref{bundles}) is the group $G(p\M p)$.

 The inner action (\ref{inner}) defines the equivalence relation on $\mathcal{L}(\M)$:
\be\label{eqiv rel1} p\thicksim q\ \ \ {\rm iff}\ \ \ q\in\mathcal{O}_p,\ee for which the
equivalence class $[p]$ is the orbit $\mathcal{O}_p$ generated from the projection $p\in
\mathcal{L}(\M)$. Let $\mathcal{L}(p):=\{q\in \mathcal{L} (\M):\ \ q\leqslant p\}\subset \mathcal{L}(\M)$ be the lattice
ideal of the subprojections of the projection $p\in \mathcal{L}(\M)$. One has the
canonically defined relation $\prec $ on the set of  equivalence classes of the equivalence relation (\ref{eqiv rel1}), i.e. \be\label{eqiv rel2}[q]\prec [p]\ \ \ {\rm
iff}\bigcup_{q'\in[q]} \mathcal{L}(q') \ \subset\ \bigcup_{p'\in[p]} \mathcal{L}(p').\ee

From  Proposition \ref{cor1} we conclude

\begin{cor}
The inner actions of $\mathcal{G}(\M)$ and  $\mathcal{U}(\M)$ on $\mathcal{L}(\M)$
have the same orbits.
\end{cor}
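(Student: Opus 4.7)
The plan is to prove the two inclusions between the $\mathcal{G}(\M)$-orbit $\mathcal{O}_p^{\mathcal{G}}$ and the $\mathcal{U}(\M)$-orbit $\mathcal{O}_p^{\mathcal{U}}$ of a projection $p\in\mathcal{L}(\M)$ under the inner action. One inclusion is essentially free: by Proposition \ref{cor1}, $\mathcal{U}(\M)$ is a wide subgroupoid of $\mathcal{G}(\M)$, and the inverse map $\iota$ restricted to $\mathcal{U}(\M)$ coincides with the involution $u\mapsto u^*$ (as follows from the discussion of fixed points of $J$ in (\ref{parisom})). Hence every $q=upu^*$ with $u\in\Ss^{-1}(p)\cap\mathcal{U}(\M)$ is also of the form $up\iota(u)$ with $u\in\Ss^{-1}(p)\cap\mathcal{G}(\M)$, giving $\mathcal{O}_p^{\mathcal{U}}\subset\mathcal{O}_p^{\mathcal{G}}$.

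For the reverse inclusion, I would start with an arbitrary $x\in\Ss^{-1}(p)\cap\mathcal{G}(\M)$ and its polar decomposition $x=u|x|$ from (\ref{polar}). From (\ref{support}) and the assumption $\Ss(x)=r(x)=p$, I get $u^{*}u=p$, so $u\in\Ss^{-1}(p)\cap\mathcal{U}(\M)$. Moreover the definition of $\mathcal{G}(\M)$ gives $|x|\in G(p\M p)$ with $s(|x|)=p$, and from (\ref{inverse}) I can compute $\iota(x)=|x|^{-1}u^{*}$.

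The crucial and completely routine step is then the cancellation
\be
xp\iota(x)=u|x|\,p\,|x|^{-1}u^{*}=u\bigl(|x|\,|x|^{-1}\bigr)u^{*}=upu^{*}=u\,p\,\iota(u),
\ee
where I use $p|x|=|x|p=|x|$ (because $p$ is the unit of $p\M p$ and $|x|\in p\M p$), and $\iota(u)=|u|^{-1}u^{*}=pu^{*}=u^{*}$ since $|u|=(u^{*}u)^{1/2}=p$. This identifies the $\mathcal{G}(\M)$-orbit element $xp\iota(x)$ with the $\mathcal{U}(\M)$-orbit element $up\iota(u)=uu^{*}=\Tt(u)$, giving $\mathcal{O}_p^{\mathcal{G}}\subset\mathcal{O}_p^{\mathcal{U}}$ and concluding the proof.

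There is no serious obstacle here; the only point that deserves a brief verification is that $|x|^{-1}$ — which by construction only makes sense as an inverse inside the corner algebra $p\M p$ — really conjugates $p$ to $p$, and this reduces to the observation that $p$ is the identity of $p\M p$ together with the support identity $|x|p=|x|$ following from $s(|x|)=p$.
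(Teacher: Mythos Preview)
Your argument is correct and is essentially the approach the paper has in mind: the paper states the corollary as an immediate consequence of Proposition~\ref{cor1} without writing out a proof, but the implicit reason is exactly the polar decomposition identity you compute, encoded already in (\ref{moment}) as $\Ss(x)=u^*u$ and $\Tt(x)=uu^*$. Since $xp\iota(x)=x\iota(x)=\Tt(x)$ for $x\in\Ss^{-1}(p)$, the $\G(\M)$-orbit of $p$ is $\Tt(\Ss^{-1}(p))=\{uu^*:u\in\U(\M),\ u^*u=p\}$, which is precisely the $\U(\M)$-orbit; your explicit cancellation $u|x|\,p\,|x|^{-1}u^*=upu^*$ just unpacks this.
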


Therefore one can define the equivalence relation (\ref{eqiv rel1}) and the
relation (\ref{eqiv rel2}) taking $\mathcal{U}(\M)$ instead of $\mathcal{G}(\M)$.

\begin{prop}
The relation $\prec $  defined in (\ref{eqiv rel2}) is a order relation on the set of
the inner action orbits of groupoid $\G(\M)$ on $\mathcal{L}(\M)$. If  $\M$ is a factor then this order is linear.
\end{prop}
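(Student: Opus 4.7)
The plan is to establish reflexivity, transitivity, and antisymmetry of $\prec$ separately, then handle linearity in the factor case. Writing $E_p := \bigcup_{p' \in [p]} \mathcal{L}(p')$, the definition reads $[q] \prec [p]$ iff $E_q \subset E_p$, so reflexivity and transitivity are immediate from the corresponding properties of set inclusion.

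For antisymmetry I would use that $q \in \mathcal{L}(q) \subset E_q$, so $[q] \prec [p]$ forces $q \leq p_0$ for some $p_0 \in [p]$. Picking $u \in \U(\M)$ with $u^*u = p$ and $uu^* = p_0$ (available thanks to Corollary \ref{cor1}), one checks that $qu \in \U(\M)$ implements an equivalence between $q$ and the projection $u^*qu \leq p$: indeed $(qu)^*(qu) = u^*qu$ and $(qu)(qu)^* = qp_0q = q$. This is precisely the statement that $q$ is Murray--von Neumann subequivalent to $p$. Symmetrically $p$ is subequivalent to $q$, and the Schr\"oder--Bernstein theorem for projections in a $W^*$-algebra (see \cite{takesaki}) yields $p \thicksim q$, hence $[p] = [q]$.

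For linearity when $\M$ is a factor I would invoke the comparison theorem for projections in a factor. Assume without loss of generality that $p$ is subequivalent to $q$; I would show $E_p \subset E_q$. Given $r \leq p'$ with $p' \in [p]$, the relations $r \leq p' \thicksim p$ together with the subequivalence of $p$ to $q$ produce $r_0 \leq q$ with $r \thicksim r_0$. It remains to find $q' \in [q]$ with $r \leq q'$. Apply comparison once more, now to $q - r_0$ and $1 - r$. If $q - r_0$ is subequivalent to $1 - r$, there is $s \leq 1 - r$ with $s \thicksim q - r_0$; since $r \perp s$ and $r_0 \perp q - r_0$, the projection $q' := r + s$ satisfies $q' \thicksim q$. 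Otherwise $1 - r$ is subequivalent to $q - r_0$, giving $t \leq q - r_0$ with $1 - r \thicksim t$; then $1 = r + (1-r) \thicksim r_0 + t \leq q \leq 1$, so a second application of Schr\"oder--Bernstein forces $q \thicksim 1$ and one may take $q' = 1$.

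The main obstacle is precisely this last dichotomy in the linearity step: even after the factor hypothesis guarantees that $p$ is subequivalent to $q$, extending the given subprojection $r$ into the class $[q]$ is not automatic, and in the ``heavy'' branch one is driven to the conclusion $q \thicksim 1$ via Schr\"oder--Bernstein after a second appeal to comparison. Everywhere else the argument reduces to the order-theoretic content of the definition and the two standard structural results for projections in $W^*$-algebras (comparison in a factor, and Schr\"oder--Bernstein in general).
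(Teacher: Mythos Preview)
Your argument is correct, and it diverges from the paper's in two places worth noting.

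For antisymmetry the paper argues via maximal elements of the orbit $\mathcal{O}_p$: it asserts $\bigcup_{p'\in\max[p]}\mathcal{L}(p')=\bigcup_{p'\in[p]}\mathcal{L}(p')$, then from $E_p=E_q$ picks $p'\in\max[p]$, finds $q'\in\max[q]$ with $p'\leq q'$, and uses maximality to force $p'=q'$. Your route bypasses maximal elements entirely: from $E_q\subset E_p$ you extract $q\leq p_0\in[p]$, compress via a partial isometry to get Murray--von Neumann subequivalence $q\precsim p$, do the same in the other direction, and invoke the Cantor--Schr\"oder--Bernstein theorem for projections. This is cleaner in that it never needs the existence of maximal elements in an equivalence class (which the paper leaves unjustified), at the cost of importing Schr\"oder--Bernstein. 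Both reach $p\sim q$ and hence $[p]=[q]$.

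For linearity the paper simply writes that it ``follows from the Comparability Theorem.'' You unpack what that actually requires: comparability gives, say, $p\precsim q$, but one must still show $E_p\subset E_q$, i.e.\ that any $r\leq p'\in[p]$ sits under some $q'\in[q]$. Your two-branch argument (comparing $q-r_0$ with $1-r$, and in the ``heavy'' branch concluding $q\sim 1$) fills exactly this gap and is correct. So your treatment of the factor case is strictly more detailed than the paper's one-line appeal; the paper's intended argument is presumably along the same lines but is not spelled out.
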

\prf{ Let us denote by $max[p]$ the set of maximal elements of the orbit
$\mathcal{O}_p$. Since $\mathcal{L}(p)\subset \mathcal{L}(q)$ iff $p\leqslant q$ we have $\bigcup_{p'\in
max[p]} \mathcal{L}(p') =\bigcup_{p'\in [p]} \mathcal{L}(p')$. Thus $[p]\prec [q]$ and $[q]\prec [p]$
iff $\bigcup_{p'\in [p]} \mathcal{L}(p')=\bigcup_{q'\in[q]} \mathcal{L}(q').$ From the above it follows
that $p'\in \mathcal{L}(q')$ for some $q'\in max[q]$. Since $p'\in[p]$ and $q'\in[q]$ are
maximal elements of the orbits we obtain $[p]=[q]$, i.e. the relation $\prec$ is
antisymmetric.
The proof of reflexivity and transitivity of the relation  $\prec$ is trivial.\\
In the factor case the linearity of order relation $\prec $  follows from Comparability Theorem,
e.g. see \cite{sakai}, \cite{takesaki}.}

The equivalence relation (\ref{eqiv rel1}) is fundamental  for the
classification of $W^*$-algebras, see \cite{sakai}, \cite{takesaki}. So, the
problem of classification of $\mathcal{U}(\M)$-orbits on $\mathcal{L}(\M)$ is strictly
related to the Murray and von Neumann classification of $W^*$-algebras. The reason is that the inner
action $I:\mathcal{U}(\M) * \mathcal{L}(\M)\to \mathcal{L}(\M)$ preserves the lattice structure
of $\mathcal{L}(\M)$, i.e. for any $(u,p)\in \mathcal{U}(\M)*\mathcal{L}(\M)$ the maps \be I_u:\
\mathcal{L}(p)\ \to\ \mathcal{L}(upu^*)\ee are isomorphisms of the lattice ideals. In particular if the
projection $z\in \mathcal{L}(\M)\cap \mathcal{Z}(\M)$ is central, where $\mathcal{Z}(\M)$
is the center of $\M$, then the lattice $\mathcal{L}(z)=\mathcal{L}(z\M)$ is preserved by the
inner action.  This allows us to reduce the classification of
$\mathcal{U}(\M)$-orbits on $\mathcal{L}(\M)$ to the classification of
$\mathcal{U}(\M)$-orbits for the case when $\M$ is a factor.

In the subsequent part of this section we investigate the representations of the groupoids on the vector bundles which are given by the structure of the $W^*$-algebra.

Let us begin by briefly explaining that what one understands by representation of the groupoid is a direct generalization of the notion of group representation in the vector space. However for groupoids one takes a vector bundle instead of the vector space. For the purposes of this paper as a rule we assume that the fibres $\pi^{-1}(m)$, $m\in M$, of vector bundle $(\mathbb{E},M, \pi:\mathbb{E}\to M)$ under consideration will be not necessary isomorphic. In consequence of that the structural groupoid $\G(\mathbb{E})$ of this  bundle  would be not necessary transitive on base $M$.

Recall, see also \cite{mac2}, that the structural groupoid $\G(\mathbb{E})$ consists of linear isomorphisms $e_m^n:\mathbb{E}_m\ \tilde{\to}\ \mathbb{E}_n$ between the fibers of $\pi: \mathbb{E}\to M$. The base of $\G(\mathbb{E})$ is the base set $M$ of the bundle. The source map $\Ss:\G(\mathbb{E})\to M$ and the target map $\Tt:\G(\mathbb{E})\to M$ are defined by $\Ss(e_m^n):=m$ and  $ \Tt(e_m^n):=n$ respectively. The inverse map is given by $\iota(e_m^n):=(e_m^n)^{-1}$ and the identity section by $\varepsilon(m):=id_m^m$. Finally the product of isomorphisms  $e_l^m:\mathbb{E}_l\ \tilde{\to}\ \mathbb{E}_m$ and $e_m^n:\mathbb{E}_m\ \tilde{\to}\ \mathbb{E}_n$ is given by their composition $e_m^n\circ e_l^m:\mathbb{E}_l\ \tilde{\to}\ \mathbb{E}_n$.

Usually one investigates vector bundles with some additional structures. Further we will consider cases when the fibres of $\pi: \mathbb{E}\to M$ will be provided with these structures. For example such as: Hilbert space structure, $W^*$-algebra structure, lattice structure or $W^*$-algebra module structure.

Let $G$ be a groupoid with base set $B$. One defines the representation of $G$ on the vector bundle $\pi: \mathbb{E}\to M$ as a groupoid morphism:

\unitlength=5mm
\be\label{bundle}\begin{picture}(11,4.6)
    \put(1,4){\makebox(0,0){$G$}}
    \put(8,4){\makebox(0,0){$\G(\mathbb{E})$}}
    \put(1,-1){\makebox(0,0){$B$}}
    \put(8,-1){\makebox(0,0){$M$}}
    \put(1.2,3){\vector(0,-1){3}}
    \put(0.7,3){\vector(0,-1){3}}
    \put(8.2,3){\vector(0,-1){3}}
    \put(7.7,3){\vector(0,-1){3}}
    \put(2.5,4){\vector(1,0){4}}
    \put(2,-1){\vector(1,0){4.7}}
    \put(-0.2,1.4){\makebox(0,0){$\Ss$}}
    \put(2.2,1.4){\makebox(0,0){$ \Tt$}}
    \put(9.2,1.4){\makebox(0,0){$\Tt$}}
    \put(6.8,1.4){\makebox(0,0){$\Ss$}}
    \put(4.5,4.6){\makebox(0,0){$\phi$}}
    \put(4.5,-0.5){\makebox(0,0){$\varphi$}}
    \end{picture}\ee
\bigskip
\\
of $G$ into the structural groupoid $\G(\mathbb{E})$ of the bundle.

After these preliminary remarks let us consider the following two actions of $\G(\M)$ on the $W^*$-algebra $\M$:
 \ben[(i)]
\item the left action $L:\mathcal{G}(\M) *_l\M\to\M$ defined by
\be\label{left action} L_x y:=xy \ee for $(x,y)\in \G(\M)*_l\M:=\{(x,y)\in
\G(\M)\times \M;\quad r(x)=l(y)\}$;
\item  the right action $R:\mathcal{G}(\M) *_r\M\to \M$ defined by
\be\label{right action} R_x y:=yx \ee for $(x,y)\in \G(\M)*_r\M:=\{(x,y)\in
\G(\M)\times \M;\quad l(x)=r(y)\}$. \een
The moment map $\mu:\M\to\mathcal{L}(\M)$ (see Appendix C) for the left action $L$ (the right action $R$) is the left support map
$\mu:=l:\M\to \mathcal{L}(\M)$ (the right support map $\mu:=r:\M\to \mathcal{L}(\M)$)  defined in
(\ref{support}). Since the both actions are intertwined by the inverse map, i.e.
\be\label{circ} \iota\circ L_x=R_{\iota(x)}\circ
\iota\ee
we will restrict ourselves to the left action only. All statements concerning the right action
$R:\G(\M)*_r\M\to\M$ we obtain converting statements for the left action $L$   by (\ref{circ}).

Let us take $p,\ q\in \mathcal{L}(\M)$. According to the commonly accepted notation by $\G(\M)_p^q$ we denote the set $\Tt^{-1}(q)\cap\Ss^{-1}(p)$. For any $p\in \mathcal{L}(\M)$ one has the following inclusions:
\be\begin{array}{l} \Ss^{-1}(p)\subset r^{-1}(p)\subset \M p \\
\Tt^{-1}(p)\subset l^{-1}(p)\subset p\M\end{array}\ee
where $\M p$ ($p\M$) is the left (right) $W^*$-ideal generated by $p$. Note here that $\Ss=r|_{\G(\M)}$ and $\Tt=l|_{\G(\M)}$.

Now we consider the bundle $\pi:\mathcal{M}_R(\M)\to\mathcal{L}(\M)$ of right $\M$-modules over the lattice $\mathcal{L}(\M)$ with total space defined by
 \be\label{total} \mathcal{M}_R(\M):=\{(y,p)\in \M\times \mathcal{L}(\M):\quad p\ r(y)=r(y)\}\ee
 and bundle map $\pi:=pr_2$ as the projection on the second component of the product $\M\times \mathcal{L}(\M)$. The fibre $\pi^{-1}(p)$ over $p\in \mathcal{L}(\M)$ is the right ideal $p\M$ of $\M$ generated by the projection $p$. Any element $x\in \G(\M)_p^q$ defines by the left multiplication  an isomorphism $L_x:p\M\ \tilde{\to}\  q\M$ of the right $\M$-modules, i. e.
 \be\label{modul} L_x(ay)=L_x(a)y\ee
 for $a\in p\M$ and $y\in \M$. The $\M$-module isomorphisms $L:p\M\ \tilde{\to}\  q\M$, where $p,q\in \mathcal{L}(\M)$, form the groupoid $\G(\mathcal{M}_R(\M))$ of structural isomorphisms of the fibers of the bundle $\pi:\mathcal{M}_R(\M)\to\mathcal{L}(\M)$. One can show that $L=L_x$ for some $x\in \G(\M)_p^q$. Thus we have the following statement:
 \begin{prop} The structural groupoid $\G(\mathcal{M}_R(\M))$ of the bundle $\pi:\mathcal{M}_R(\M)\to\mathcal{L}(\M)$ is isomorphic to $\G(\M)$.
 \end{prop}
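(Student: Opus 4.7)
The plan is to construct the isomorphism explicitly as the left multiplication map and verify it bijectively identifies $\G(\M)$ with $\G(\mathcal{M}_R(\M))$ over the identity on $\mathcal{L}(\M)$. Define
\[
\Phi : \G(\M) \to \G(\mathcal{M}_R(\M)), \qquad \Phi(x) := L_x,
\]
where for $x \in \G(\M)_p^q$ the map $L_x : p\M \to q\M$ is given by $L_x(y) = xy$. By the very definition of $\G(\M)$ (see (\ref{leftisom})) $L_x$ is a right $\M$-module isomorphism, and $L_{\iota(x)}$ provides its inverse. Associativity of the product in $\M$ immediately gives $L_{xy} = L_x \circ L_y$ on composable pairs, while $L_p = \mathrm{id}_{p\M}$, so $\Phi$ is a morphism of groupoids compatible with the identity on the common base $\mathcal{L}(\M)$.

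Next I would verify injectivity. If $L_x = L_y$ for $x,y \in \G(\M)_p^q$, apply both sides to the identity $p \in p\M$: one gets $xp = yp$. Since $r(x) = r(y) = p$ one has $xp = x$ and $yp = y$, so $x = y$.

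The main step, and the only nontrivial point, is surjectivity: given an arbitrary right $\M$-module isomorphism $L : p\M \to q\M$, produce the element $x \in \G(\M)_p^q$ with $L = L_x$. Set $x := L(p)$ and $x' := L^{-1}(q)$. For any $y \in p\M$ we have $py = y$, hence by right $\M$-linearity
\[
L(y) = L(p \cdot y) = L(p)\, y = x y,
\]
and analogously $L^{-1}(z) = x' z$ for $z \in q\M$. From $L L^{-1} = \mathrm{id}_{q\M}$ applied to $q$ and $L^{-1} L = \mathrm{id}_{p\M}$ applied to $p$ we obtain the ``corner inverse'' relations
\[
x x' = q, \qquad x' x = p.
\]
Taking left and right supports yields $l(x) = q$ and $r(x) = p$.

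The hard point, which I expect to be the main obstacle, is now to upgrade $x$ from an element of $q\M p$ with a corner inverse to an element of $\G(\M)_p^q$, i.e.\ to show that $|x|$ is invertible in $p\M p$. For this I would invoke the polar decomposition $x = u|x|$ with $u^*u = r(x) = p$, $uu^* = l(x) = q$, and observe that $x'u \in p\M p$ (using $px' = x'$, $x'q = x'$ and $qu = u$, $up = u$). A direct computation gives
\[
|x|\,(x'u) = u^* x \, x' u = u^* q u = p, \qquad (x'u)\,|x| = x' \, u|x| = x' x = p,
\]
so $x'u$ is a two-sided inverse of $|x|$ in $p\M p$. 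Hence $x \in \G(\M)_p^q$ and $\Phi(x) = L_x = L$. This establishes that $\Phi$ is a bijective groupoid morphism covering the identity of $\mathcal{L}(\M)$, i.e.\ an isomorphism of groupoids.
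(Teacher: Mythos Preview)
Your proof is correct and follows exactly the approach the paper sketches: the paper defines the map $x\mapsto L_x$, notes (\ref{modul}), and then simply asserts ``One can show that $L=L_x$ for some $x\in \G(\M)_p^q$'' without supplying any argument. You have supplied precisely that argument, including the nontrivial step---showing via polar decomposition and the corner inverse $x'$ that $|x|$ is invertible in $p\M p$---which the paper omits entirely.
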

 Replacing $\mathcal{M}_R(\M)$ by  $\mathcal{M}_L(\M)$ and the  action $x\to L_x$  by the right action $x\to R_x$, where $x\in \G(\M)$, we obtain the anti-isomorphism of $\G(\M)$ with $\G(\mathcal{M}_L(\M))$. Using the above two representations  we obtain  a representation of  $\G(\M)$ on the bundle $\pi:\mathcal{A}(\M)\to \mathcal{L}(\M)$ of the $W^*$-subalgebras of  $\M$ with total space $\mathcal{A}(\M)$  defined by
 \be\label{A}
 \mathcal{A}(\M):=\{(y,p)\in \M\times \mathcal{L}(\M):\quad y\in p\M p\}
 \ee
 and the bundle map by $\pi:=pr_2$. The morphism $I:\G(\M)\to \G(\mathcal{A}(\M))$ of $\G(\M)$ into the structural groupoid $\G(\mathcal{A}(\M))$ of the bundle $\pi:\mathcal{A}(\M)\to\mathcal{L}(\M)$ is defined as follows
 \be\label{I} I_x:=R_{\iota(x)}\circ L_x: p\M p\to q\M q,\ee
where $x\in \G(\M)_p^q$.\\
 Note here that $\mathcal{J}(\M)\subset\mathcal{A}(\M)$ and the action   $I:\G(\M)*\mathcal{A}(\M)\to\mathcal{A}(\M)$ is an extension of the inner action $I:\G(\M)* \mathcal{J}(\M)\to\mathcal{J}(\M)$.  For $u\in \U(\M)_p^q$ we find that $I_u:p\M p\to q\M q$ is an isomorphism of $W^*$-subalgebras of $\M$. Thus we have

 \begin{prop}\label{2.6}
The inner action $I:\mathcal{U}(\M)* \mathcal{A}(\M)\to \mathcal{A}(\M)$ of the partial isometries
groupoid $\mathcal{U}(\M)$ on $\mathcal{A}(\M)$ preserves the  positivity, normality,
selfadjointness and the norm of the elements of the fibres of $\mathcal{A}(\M)$, i.e.: \ben[(i)]
\item $|I_ux|=I_u|x|$,
\item $xx^*=x^*x\ \ $ iff $\ \ (I_ux)^*(I_ux)=(I_ux)(I_ux)^*$
\item $x=x^*\ \ $ iff $\ \ (I_ux)^*=I_ux$
\item $\| I_ux\|=\|x\|$
\een for $(u,x)\in \mathcal{U}(\M)*\mathcal{A}(\M)$.
\end{prop}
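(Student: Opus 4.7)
My plan is to unpack the inner action on a partial isometry and then verify the four properties in order of increasing subtlety. For $u \in \mathcal{U}(\M)_p^q$, the polar decomposition of $u$ is trivial: $|u| = (u^*u)^{1/2} = p$, so applying (\ref{inverse}) one gets $\iota(u) = p^{-1}u^* = u^*u \cdot u^* = u^*$. Hence $I_u x = u x u^*$ for every $x \in p\M p$, and throughout the proof I will freely use $u^*u = p$, $uu^* = q$, together with the identity $x = pxp$, equivalently $xp = px = x$, available for $x \in p\M p$.

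For (iii), selfadjointness is immediate in one direction since $(uxu^*)^* = u x^* u^*$. For the converse, multiplying $ux^*u^* = uxu^*$ by $u^*$ on the left and $u$ on the right collapses $u^*u$ to $p$ on both sides, giving $px^*p = pxp$, i.e.\ $x^* = x$. For (ii), I would compute both one-sided products: using $u^*u = p$ and $xp = px = x$,
\[
(uxu^*)(uxu^*)^* = u x (u^*u) x^* u^* = u (xx^*) u^*,
\]
and analogously $(uxu^*)^*(uxu^*) = u(x^*x)u^*$. Normality of $x$ thus implies normality of $uxu^*$; the converse again comes from multiplying by $u^*,u$ to recover the identity inside $p\M p$. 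For (iv), $\|uxu^*\|\leq\|u\|\,\|x\|\,\|u^*\|\leq\|x\|$, and conversely $\|x\| = \|u^*(uxu^*)u\|\leq\|uxu^*\|$, so $\|I_u x\| = \|x\|$.

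The main point is (i), where I would invoke uniqueness of the polar decomposition. Write $x = v|x|$ with $v \in \mathcal{U}(\M)$ and $v^*v = s(|x|) \leq p$, so that $pv = v = vp$. Then
\[
uxu^* = (uvu^*)(u|x|u^*),
\]
and it suffices to check that this is the polar decomposition of $uxu^*$. The factor $u|x|u^*$ is manifestly positive; the factor $uvu^*$ is a partial isometry because
\[
(uvu^*)^*(uvu^*) = u v^* (u^*u) v u^* = u v^* p v u^* = u(v^*v)u^*,
\]
and $u(v^*v)u^*$ is a projection, since $(u(v^*v)u^*)^2 = u(v^*v)p(v^*v)u^* = u(v^*v)u^*$ using that $v^*v \in p\M p$. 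The remaining compatibility $(uvu^*)^*(uvu^*) = s(u|x|u^*)$ follows from the computation $u s(|x|) u^* \cdot u|x|u^* = u s(|x|)\, |x| u^* = u|x|u^*$ together with the minimality built into $s(|x|)$. By uniqueness of the polar decomposition in $\M$, $|I_u x| = u|x|u^* = I_u|x|$.

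The only step that requires some care is (i): the conjugated partial isometry part has to be shown to be a genuine partial isometry with the correct support, and it is here that the identity $v = pv = vp$ (together with $u^*u = p$) is used in an essential way. Once (i) is established, (iii) and (ii) can in fact be rederived from it, but proving them directly as above is simpler and highlights why the partial isometry hypothesis on $u$ is needed rather than just partial invertibility.
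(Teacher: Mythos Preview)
Your proof is correct. The paper itself does not give a detailed argument: it simply notes, in the line preceding the proposition, that for $u\in\mathcal{U}(\M)_p^q$ the map $I_u:p\M p\to q\M q$ is an isomorphism of $W^*$-subalgebras, and lets (i)--(iv) follow from the standard fact that $*$-isomorphisms preserve modulus, normality, selfadjointness and norm. Your approach is the same idea unpacked: you identify $I_u x = uxu^*$ and then verify each property by hand, the only nontrivial step being (i), where you reconstruct the polar decomposition of $uxu^*$ and appeal to its uniqueness. The one place you could tighten things is the minimality claim $s(u|x|u^*)=u\,s(|x|)\,u^*$: it is justified because $s(u|x|u^*)\leq uu^*=q$, so $u^*s(u|x|u^*)u$ is a projection in $p\M p$ dominating $|x|$, hence $\geq s(|x|)$; but this is exactly the $*$-isomorphism argument in disguise.
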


Let $\M^+, \M^h$ and $\M^n$ denote the sets of positive, selfadjoint and
normal elements of $\M$ respectively. Let $S$ be the sphere in $\M$, i. e. $x\in
S$ if and only if $\|x\|=1$.  We conclude from Proposition \ref{2.6} that the
subsets $\J(\M)\cap\M^+$, $\ \J(\M)\cap\M^h$, $\ \J(\M)\cap\M^n$, $\
\J(\M)\cap S$ and $\ \J(\M)\cap \mathcal{U}(\M)$ are invariant with respect to
the inner action $I:\mathcal{U}(\M)* \J(\M)\to \J(\M)$. Let us also note that the
lattice of  projections $\mathcal{L}(\M)$ consists of the extreme points in
$\M^+\cap S$, e.g. see \cite{sakai}.

\begin{thm}
The actions $L:\mathcal{U}(\M) *_l \M\to \M$   and $R:\mathcal{U}(\M) *_r \M\to \M$
defined by (\ref{left action}) and (\ref{right action}) are free. Their orbits
are indexed by elements of the cone $\M^+$ of positive selfadjoint elements of
$\ \M$.
\end{thm}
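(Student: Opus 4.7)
The actions $L$ and $R$ are intertwined by the inverse map via (\ref{circ}), so it suffices to treat the left action $L$; the statement for $R$ then follows by applying $\iota$.

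For freeness, I would start from $L_u y = uy = y$ with $(u,y)\in \U(\M)*_l\M$ (so $r(u)=l(y)$) and aim to show $u$ is the identity element $\varepsilon(l(y))=l(y)$ in the groupoid. The natural move is to right-multiply by $y^*$ and pass to $a := yy^*\in\M^+$, which has $s(a)=l(y)=r(u)$ and satisfies $ua=a$. Iterating yields $ua^n=a^n$ for all $n\geq 1$, whence by continuous functional calculus $u f(a)=f(a)$ for every continuous $f$ with $f(0)=0$. Applied to the spectral cut-offs $p_\eps:=\chi_{[\eps,\|a\|]}(a)$, written as $p_\eps = a\,f_\eps(a)$ with $f_\eps(t)=t^{-1}\chi_{[\eps,\|a\|]}(t)$, this gives $u p_\eps=p_\eps$. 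Since $p_\eps\uparrow s(a)$ $\sigma$-strongly in $\M$ as $\eps\downarrow 0$, and $u\cdot s(a)=u\cdot r(u)=u$, passing to the limit gives $u=s(a)=l(y)$, which is precisely freeness.

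For the orbit description I would use the polar decomposition (\ref{polar}). If $y_2=u y_1$ lies in the orbit of $y_1$ under $L$, then $y_2^*y_2 = y_1^* u^* u\, y_1 = y_1^* r(u) y_1 = y_1^* l(y_1) y_1 = y_1^* y_1$, so $|y_2|=|y_1|$ and the assignment $[y]\mapsto |y|$ is a well-defined map from the orbit space into $\M^+$. Conversely, if $|y_1|=|y_2|=a$ with polar decompositions $y_i=v_i a$, then the partial isometry $u:=v_2 v_1^*$ has $r(u)=v_1 v_1^* = l(y_1)$ and $u y_1 = v_2 v_1^* v_1 a = v_2 s(a) a = y_2$, so $y_1,y_2$ lie in the same orbit; surjectivity is immediate since any $a\in\M^+$ is its own modulus. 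Thus the orbit space of $L$ is in bijection with $\M^+$; the statement for $R$ follows either by transporting through (\ref{circ}) or by the parallel argument with $[y]\mapsto |y^*|$.

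The main obstacle is the freeness step: because $a\in\M^+$ need not be invertible in the corner $s(a)\M s(a)$, one cannot simply ``divide'' the identity $ua=a$ by $a$. This is exactly where the $W^*$-structure of $\M$ is needed, in order to approximate $s(a)$ by the spectral projections $p_\eps$ and invoke the $\sigma$-strong convergence $p_\eps\uparrow s(a)$; the rest of the argument is essentially bookkeeping with polar decomposition and the left/right support identities (\ref{support}).
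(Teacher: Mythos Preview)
Your proof is correct, and the orbit description via polar decomposition is essentially the paper's argument. The freeness argument, however, takes a different route from the paper's.

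You prove freeness in the isotropy form ($uy=y\Rightarrow u=\varepsilon(l(y))$) by passing to $a=yy^*$, writing the spectral cut-offs $p_\eps=a\,f_\eps(a)$ to deduce $up_\eps=p_\eps$, and then using the $\sigma$-strong convergence $p_\eps\uparrow s(a)$ together with $u\,r(u)=u$ to conclude $u=s(a)$. This is sound, and indeed the functional-calculus detour through continuous $f$ is not even needed: $u p_\eps=(ua)f_\eps(a)=a f_\eps(a)=p_\eps$ follows directly from $ua=a$. The paper instead proves freeness in the ``two-acting-elements'' form $u_1x=u_2x\Rightarrow u_1=u_2$ and avoids spectral theory entirely: from $u_1x=u_2x=:y$ one has $|y|=|x|$, and writing $x=v|x|$ gives two polar decompositions $y=(u_iv)|x|$; uniqueness of polar decomposition forces $u_1v=u_2v$, whence $u_i=u_i(u_i^*u_i)=u_ivv^*$ gives $u_1=u_2$. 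The paper's argument is shorter and uses only the uniqueness of polar decomposition, while yours makes explicit use of the Borel functional calculus and $\sigma$-strong approximation available in a $W^*$-algebra; both are perfectly valid, but the paper's route shows that the full $W^*$-toolkit you flag as ``the main obstacle'' is in fact not needed here.
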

\prf{ Since left and right actions are intertwined  by the inverse map
(\ref{inverse}) it is enough to consider the case of the left action $L$. Let us
assume that there are elements $u_1, \ u_2\in \mathcal{U}(\M)$ such that
$y:=u_1x=u_2x$ for $u_1^*u_1=vv^*=u_2^*u_2$, where $v\in \mathcal{L}(\M)$ is
defined by \be x=v|x|.\ee Since
$|y|^2=y^*y=x^*u_1^*u_1x=x^*v^*vx=x^*x=|x|^2$ we have $y=u_1v|y|=u_2v|y|$.
Thus from the uniqueness  of the polar decomposition(\ref{polar}) we obtain
$u_1v=u_2v$. The above gives
$u_1=u_1u_1^*u_1=u_1vv^*=u_2vv^*=u_2u_2^*u_2=u_2$. So, the left action
$L$ is free.

Taking the  polar decomposition  $x=v|x|$ of $x\in \M$  we obtain that
$v^*x=v^*v|x|=|x|\in \M^+$. So any orbit $\mathcal{O}_x$ of $\mathcal{U}(\M)$ intersects
$\M^+$. If $x,y \in \mathcal{O}_x\cap \M^+$ then $x=|x|$, $y=|y|$ and $|y|=u|x|$ for some
$u\in \mathcal{U}(\M)$. Thus  from uniqueness  of the polar decomposition
we obtain $x=|x|=|y|=y$.}

\bigskip

For the sake of completeness and the further applications let us consider the
actions
\be\label{actions*}\begin{array}{l} L_{*}:\ \mathcal{U}(\M) *_{l_*} \M_*\to\M_*\\
 R_{*}:\ \mathcal{U}(\M) *_{r_*} \M_*\to\M_*\end{array}\ee
of $\mathcal{U}(\M)$ on $\M_*$ which are  predual to the actions $L:
\mathcal{U}(\M)\times\M\to\M$ and $\ R:\mathcal{U}(\M)\times\M\to\M$
respectively.

Refering to \cite{takesaki} we recall,  that the left predual action $L_{*}:\
\M\times\M_*\to\M_*$ (respectively the right predual action $R_{*}:\
\M\times\M_*\to\M_*$) of $W^*$-algebra $\M$ on the predual Banach space
$\M_*$ is defined by: \be \label{action} \langle x,L_{*a}\omega\rangle:=\langle
xa,\omega\rangle\ \ \ \ ({\rm respectively}\ \langle x,R_{*a}\omega
\rangle:=\langle ax,\omega\rangle)\ee for any  $x\in\M$, where $a\in \M$ and
$\omega\in \M_*.$ So, one has \be\label{sprz}(L_{*a})^*=R_a, \ \ \ {\rm and}\ \ \
(R_{*a})^*=L_a.\ee For any element $\omega\in \M_*$ one takes the closed
left  invariant subspace $[\M\omega]\subset \M_*$  (respectively the right
invariant subspace $ [\omega\M]\subset \M_*$) generated from $\omega$ by
the left (respectively right) action of $\M$ (\ref{action}). The annihilator
$[\M\omega]^0\subset \M$ of the Banach subspace $[\M\omega]\subset \M$
 is the right  $W^*$-ideal in $\M$. Similarly the
annihilator $[\omega\M]^0\subset \M$ of the Banach subspace
$[\omega\M]\subset \M$
 is the left  $W^*$-ideal in $\M$.
Thus there exist the orthogonal projections $e, f \in \M$  such that
$[\M\omega]^0=e\M$ and $ [\omega\M]^0=\M f$. The projection $e$  is the
greatest one of all the projections $q\in \M$ such that $R_{*q}\omega =0$.
Similarly the projection $f$ is the greatest one of all the projections $q\in \M$
such that $L_{*q}\omega =0$. Thus one defines the map \be r_*(\omega):=1-e
\ee and \be \ l_*(\omega):=1-f,\ee
 where $(1-e)$ and  $  (1-f)$  are  the least projection with the property
 $R_{*(1-e)}\omega=\omega$ and $  L_{*(1-f)}\omega=\omega$ respectively.
 The projections $r_*(\omega)$ and  $ l_*(\omega)$  are  called, respectively, the right
 support projection  and
 the left support projection of $\omega\in\M_*$. It follows from the polar decomposition
(e.g. see \cite{takesaki}) \be \omega=L_{*v}|{\omega}|\ee of $\omega\in \M_*$,
where $v\in \mathcal{U}(\M)$ and $|\omega|\in \M^+_*$, that
$$r_*(\omega)=v^*v \ \ {\rm and}\ \ l_*(\omega)=vv^*.$$

Considering $r_*:\M_*\to \mathcal{L}(\M)$\ and $ l_*:\M_*\to \mathcal{L}(\M)$ as  moment maps
we define  the actions (\ref{actions*})  by
\be\label{rag} \mathcal{U}(\M)*_{r_*}\M_*\ni(u,\omega)\mapsto
R_{*u}\omega\in\M_*\ee and  \be \label{lag}
\mathcal{U}(\M)*_{l_*}\M_*\ni(u,\omega)\mapsto L_{*u} \omega\in\M_*,\ee
respectively, where
$$\mathcal{U}(\M) *_{r_*}\M_*=\{(u,\omega)\in \mathcal{U}(\M) \times\M_*;\
\ \ \Tt(u)=uu^*=r_*(\omega)\}$$ and
$$ \mathcal{U}(\M) *_{l_*}\M_*=\{(u,\omega)\in \mathcal{U}(\M) \times\M_*;\ \ \
\Ss(u)=u^*u=l_*(\omega)\}.$$

Let us define the  bundle $\pi_*:\mathcal{A}_*(\M)\to \mathcal{L}(\M)$ predual to the bundle of the $W^*$-algebras $\pi:\mathcal{A}(\M)\to\mathcal{L}(\M)$. In this case the total space is the following
\be\label{A*} \mathcal{A}_*(\M):=\{(\omega,p)\in\M_*\times \mathcal{L}(\M):\ r_*(\omega)=p\ r_*(\omega)\ {\rm and}\ l_*(\omega)=l_*(\omega)p\}\ee
and the bundle map $\pi_*$ is the projection of $(\omega,p)\in \M_*\times \mathcal{L}(\M)$ on the second component. Note that one can identify the fibre $\pi^{-1}_*(p)=(R_{*p}\circ L_{*p})(\M)$, $\ p\in\mathcal{L}(\M)$, with the Banach space $(p\M p)_*$ predual to subalgebra $p\M p$.

We define the predual inner action
\be\label{inner*} I_*:\G(\M)*\mathcal{A}_*(\M)\to \mathcal{A}_*(\M)\ee of the groupoid $\G(\M)$  on $\mathcal{A}_*(\M)$
where $ \G(\M)*\mathcal{A}_*(\M):=\{(x,(\omega,p)):\ \Ss(x)=p\}$
and the bundle map $\pi_*:\mathcal{A}_*(\M) \rightarrow \mathcal{L}(\M)$ is the
moment map.

Now we  define the following subbundles of $\pi_*:\mathcal{A}_*(\M)\to \mathcal{L}(\M)$. The subbundle $\pi_*:\mathcal{J}_*(\M)\to\mathcal{L}(\M)$  whose total space is defined by
\be\label{bundle1}\mathcal{J}_*(\M):=\{(\omega,p)\in\mathcal{A}_*(\M):\quad l_*(\omega)=r_*(\omega)=p\}.\ee
The subbundle of selfadjoint normal functionals $\pi_*:\mathcal{A}^h_*(\M)\to\mathcal{L}(\M)$ for which the total space is
\be\label{bundle3}\mathcal{A}^h_*(\M):=\{(\omega,p)\in\mathcal{A}_*(\M): \ \omega^*=\omega\}.\ee
The subbundle of positive normal functionals $\pi_*:\mathcal{A}^+_*(\M)\to\mathcal{L}(\M)$ with  the set
\be\label{bundle2}\mathcal{A}^+_*(\M):=\{(\omega,p)\in\mathcal{A}_*(\M):\ \omega^*=\omega>0\}.\ee
as the total space.\\

\bigskip

When we restrict the action (\ref{inner*}) to subgroupoid $\mathcal{U}(\M)\subset\G(\M)$ we obtain the following statement.

\begin{prop}\label{pred}
For the predual inner action $I_*:\mathcal{U}(\M)* \mathcal{A}_*(\M)\to \mathcal{A}_*(\M)$ of the
partial isometries groupoid $\mathcal{U}(\M)$ on $\mathcal{A}_*(\M)$ one has:
\be\label{Aaa}  \omega=\omega^*\ \ {\rm iff }\ \
(I_{*u}\omega)^*=I_{*u}\omega\ee \be\label{Bb}  \| I_{*u}\omega\|=\|\omega\|\ee
\be\label{Cc}|I_{*u}\omega|=I_{*u}|\omega|\ee
for $(u,(\omega,p))\in \mathcal{U}(\M)*\mathcal{A}_*(\M)$, i.e. the subbundles (\ref{bundle1}), (\ref{bundle2}), (\ref{bundle3}) are  invariant with respect to this action.

\end{prop}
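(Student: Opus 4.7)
The plan is to get all three identities from a single observation: that $I_{*u}$ is the predual of a normal $*$-isomorphism between two $W^*$-subalgebras of $\M$, and then to pin down (\ref{Cc}) by uniqueness of the polar decomposition in $\M_*$. Writing $p=u^*u=\Ss(u)$ and $q=uu^*=\Tt(u)$ and unwinding (\ref{inner*}), (\ref{I}) and (\ref{action}), the predual inner action takes the explicit form
\[
I_{*u}\;=\;L_{*u}\circ R_{*u^{*}},\qquad (I_{*u}\omega)(x)\;=\;\omega(u^*xu),
\]
valid for $(\omega,p)\in\A_*(\M)$ and $x\in q\M q$. A direct check shows that $x\mapsto u^*xu$ is a normal $*$-isomorphism $q\M q\to p\M p$ (the identities $u^*u=p$, $uu^*=q$ and $qxq=x$ for $x\in q\M q$ suffice), hence $I_{*u}:(p\M p)_*\to (q\M q)_*$ is its predual map.

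Granted this, (\ref{Aaa}) is an immediate one-line computation using $\omega^*(y):=\overline{\omega(y^*)}$ and $(u^*xu)^*=u^*x^*u$:
\[
(I_{*u}\omega)^*(x)\;=\;\overline{\omega(u^*x^*u)}\;=\;\overline{\omega((u^*xu)^*)}\;=\;\omega^*(u^*xu)\;=\;(I_{*u}\omega^*)(x),
\]
so $(I_{*u}\omega)^*=I_{*u}\omega^*$, and both directions of the equivalence follow from the injectivity of $I_{*u}$. Once (\ref{Cc}) is in hand, (\ref{Bb}) also drops out, using that the norm of a positive normal functional on $q\M q$ equals its value on $q$ and that $|\omega|$ vanishes on $1-p$:
\[
\|I_{*u}\omega\|\;=\;\|I_{*u}|\omega|\|\;=\;(I_{*u}|\omega|)(q)\;=\;|\omega|(u^*qu)\;=\;|\omega|(p)\;=\;\|\omega\|.
\]

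The core of the proposition is therefore (\ref{Cc}). I would take the polar decomposition $\omega=L_{*v}|\omega|$ of \cite{takesaki} with $v\in\U(\M)$, $v^*v=r_*(\omega)$, $vv^*=l_*(\omega)$ (both $\leq p$), and propose the candidate
\[
I_{*u}\omega\;=\;L_{*(uvu^{*})}\bigl(I_{*u}|\omega|\bigr).
\]
Two things need checking. First, $uvu^*$ is a partial isometry in $\M$, since $(uvu^*)^*(uvu^*)=uv^*vu^*$ and $(uvu^*)(uvu^*)^*=uvv^*u^*$ are projections (using $u^*u=p$ together with $v^*v,vv^*\leq p$); and $I_{*u}|\omega|$ is positive normal, because $u^*(y^*y)u=(yu)^*(yu)\geq 0$. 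Second, the proposed identity reduces to the chain
\[
(L_{*(uvu^{*})}(I_{*u}|\omega|))(x)\;=\;|\omega|(u^*x\,uvu^*\,u)\;=\;|\omega|(u^*xuvp)\;=\;|\omega|(u^*xuv)\;=\;\omega(u^*xu),
\]
which uses $u^*u=p$ and then the absorption $vp=v$ (a consequence of $v^*v\leq p$), and finishes by the defining property of the polar decomposition $\omega=L_{*v}|\omega|$. Uniqueness of the polar decomposition in $\M_*$ then yields $|I_{*u}\omega|=I_{*u}|\omega|$. The main obstacle is the bookkeeping of supports in this calculation: one has to keep $u^*u=p$, $uu^*=q$, and the containments $r_*(\omega),l_*(\omega)\leq p$ simultaneously in view so that the absorption identities $pv=vp=v$ land at the right spots and $uvu^*$ is placed in the correct corner of $\M$ for the uniqueness argument to apply.
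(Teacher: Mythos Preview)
Your argument is correct and follows essentially the same route as the paper: the identity $(I_{*u}\omega)^*=I_{*u}\omega^*$ for (\ref{Aaa}), and for (\ref{Cc}) the proposal $I_{*u}\omega=L_{*(uvu^*)}(I_{*u}|\omega|)$ together with uniqueness of the polar decomposition in $\M_*$. The one place you diverge is (\ref{Bb}): the paper proves it directly and independently of (\ref{Cc}) via the two-sided estimate $\|L_{*u}\omega\|\leq\|\omega\|=\|L_{*u^*u}\omega\|\leq\|L_{*u}\omega\|$ (using $\|u\|=1$), whereas you deduce it from (\ref{Cc}) by evaluating the positive functional $I_{*u}|\omega|$ at the unit $q$. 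Both are fine; the paper's version has the slight advantage of not depending on the polar decomposition machinery. One small point worth making explicit in your (\ref{Cc}) argument: for uniqueness you also need $(uvu^*)^*(uvu^*)=uv^*vu^*$ to equal $s_*(I_{*u}|\omega|)$, which follows since $I_u:p\M p\to q\M q$ is a $*$-isomorphism carrying $s_*(|\omega|)=v^*v$ to the support of $I_{*u}|\omega|$.
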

\prf {In order to prove (\ref{Aaa})  we note that for $\omega\in\pi_*^{-1}(u^*u)$ we have
$\langle\omega^*,x\rangle:=\overline{\langle\omega,x^*\rangle}$, where
$x\in\M$, so we obtain
$$(I_{*u}\omega)^*=I_{*u}\omega^*.$$
Thus and from $I_{*u^*u}\omega=\omega$
 we have that  $\omega=\omega^*$ iff $(I_{*u}\omega)^*=I_{*u}\omega.$
\newline Since $\norm u =1$ and $L_{*u^*u}\omega=\omega$ one has \be
\norm{L_{*u}\omega}\leqslant\norm\omega\leqslant\norm{L_{*u}\omega}.\ee
Similarly we prove that $\norm{R_{*u}\omega}=\norm\omega$. Thus we have
(\ref{Bb}).

 Let us assume that $\M_*\ni \omega \geqslant 0$ then for any  $x\in
\M^+$ one has $u^*xu\in \M^+$ and
$$\langle I_{*u}\omega, x\rangle=\langle\omega, u^*xu\rangle\geqslant 0.$$ Thus we find that
$I_{*u}\omega\geqslant 0$ iff $\omega\geqslant 0$. If $\omega=\omega^*$
then for any $x\in \M$ we have
$$\langle( I_{*u}\omega)^*, x\rangle = \overline{\langle\omega, u^*xu\rangle}=\langle\omega^*,
(u^*xu)^*\rangle =\langle I_{*u}\omega^*, x\rangle.$$ The above shows that inner action commutes with conjugation
 operation.

 Let us take the polar
decomposition of $\omega\in\pi_*^{-1}(u^*u)$ \be \omega= L_{*v}|\omega|.\ee We
note that the polar decomposition of $I_{*u}\omega\in\pi_*^{-1}(u^*u)$,  where
$v^*v\leqslant u^*u$ and $vv^*\leqslant u^*u$, is given by \be\label{AA} I_{*u} \omega=L_{uvu^*}|I_{*u}
\omega|. \ee From  (\ref{AA})  it follows that
$$|I_{*u} \omega|=L_{(uvu^*)^*}I_{*u} \omega.$$ Thus for any $x\in \M$ we have
 $$\langle| I_{*u}\omega|, x\rangle=\langle L_{(uvu^*)^*} I_{*u}\omega,
x\rangle=\langle I_{*u}\omega, xuv^*u^*\rangle=$$ $$ =\langle \omega,
u^*xuv^*u^*u\rangle=\langle \omega, u^*xuv^*\rangle=\langle L_{*v^*}\omega,
u^*xu\rangle=\langle |\omega|, u^*xu\rangle=$$ $$ =\langle I_{*u}|\omega|,
x\rangle.$$ Thus we obtain (\ref{Cc}) }

Now basing on GNS construction we define the pre-Hilbert bundle $\pi:\mathbb{E}\to \M_*^+$ over the cone of the positive normal states $\M _*^+$. The total space $\mathbb{E}$ and bundle projection we define as follows
\be\label{preHil} \mathbb{E}:=\{(x,\omega)\in \M\times \M_*^+:\quad xs_*(\omega)=x\}\ee
and $\pi:=pr_2|_{\mathbb{E}}$.\\
Since for $\omega\in \M _*^+$ one has $\mathbb{E}_{\omega}=\pi^{-1}(\omega)=\M s_*(\omega)$ the scalar product
 \be\mathbb{E}_{\omega}\times \mathbb{E}_{\omega}\ \ni\ (x,y)\mapsto \langle
x|y\rangle_{\omega}:=\langle\omega,x^*y\rangle\ \in\ \mathbb{C}\ee
is non degenerate. Thus it defines the pre-Hilbert space structure on $\mathbb{E}_{\omega}$. Note here that $\langle\omega,x^*x\rangle=0$ if and only if $x\in \M(1-s_*(\omega))$.

Completing $\mathbb{E}_{\omega}$ with respect to the norm  $\norm{x}_{\omega}:=\langle\omega,x^*x\rangle^{\frac{1}{2}}$ we obtain the bundle  $\ol{\pi}:\ol{\mathbb{E}}\to \M_*^+$ of Hilbert spaces. Since for the clear reasons we will call this bundle the GNS bundle.
\begin{thm}\label{rep}
One has a   faithful representation \unitlength=5mm

\be\label{diag}\begin{picture}(11,4.6)
    \put(0.8,4){\makebox(0,0){$\mathcal{U}(\M)*_{}\M_*^+$}}
    \put(8,4){\makebox(0,0){$\G(\ol{\mathbb{E}})$}}
    \put(1,-1){\makebox(0,0){$\M_*^+$}}
    \put(8,-1){\makebox(0,0){$\M_*^+$}}
    \put(1.2,3){\vector(0,-1){3}}
    \put(0.7,3){\vector(0,-1){3}}
    \put(8.2,3){\vector(0,-1){3}}
    \put(7.7,3){\vector(0,-1){3}}
    \put(3,4){\vector(1,0){3}}
    \put(2,-1){\vector(1,0){4.7}}
    \put(-0.2,1.4){\makebox(0,0){$\Ss$}}
    \put(2.2,1.4){\makebox(0,0){$\Tt$}}
    \put(9.2,1.4){\makebox(0,0){$\Tt$}}
    \put(6.8,1.4){\makebox(0,0){$\Ss$}}
    \put(4.5,4.4){\makebox(0,0){$\phi$}}
    \put(4.5,-0.5){\makebox(0,0){$id$}}
    \end{picture}\ee
\bigskip
\\
of the right action of groupoid $\mathcal{U}(\M)*_{}\M_*^+$ on the GNS Hilbert spaces
bundle $\ol{\pi}:\ol{\mathbb{E}}\to \M_*^+$  with the fibres isomorphisms $\phi(u,\omega): \ol{\mathbb{E}}_{\omega}\to \ol{\mathbb{E}}_{I_{*u}\omega}$ defined as follows
 \be\label{phi}\phi(u,\omega)(xs_*(\omega),\omega):=(xs_*(\omega)u^*, I_{*u}\omega)\ee
\end{thm}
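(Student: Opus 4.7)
The statement packages four separate verifications: (a) each $\phi(u,\omega)$ is a well-defined linear map into the correct fibre; (b) it extends to a Hilbert space isomorphism $\ol{\mathbb{E}}_\omega\to\ol{\mathbb{E}}_{I_{*u}\omega}$; (c) the family $\{\phi(u,\omega)\}$ forms a groupoid morphism; and (d) this morphism is faithful. The admissibility condition for $(u,\omega)\in\mathcal{U}(\M)*\M_*^+$ is $u^*u=s_*(\omega)$, which is exactly what makes $I_{*u}\omega$ meaningful and forces $s_*(I_{*u}\omega)=us_*(\omega)u^*=uu^*$. Then $xs_*(\omega)u^*=xu^*\cdot uu^*$ lies in $\M s_*(I_{*u}\omega)=\mathbb{E}_{I_{*u}\omega}$, and the formula manifestly depends only on the class $xs_*(\omega)\in\M s_*(\omega)$, settling (a) and linearity.

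For (b), the key computation uses the defining relation $\langle I_{*u}\omega,y\rangle=\langle\omega,u^*yu\rangle$ together with $u^*u=s_*(\omega)$:
\begin{align*}
\|\phi(u,\omega)(xs_*(\omega),\omega)\|^2_{I_{*u}\omega}
&= \langle I_{*u}\omega,\, us_*(\omega)x^*xs_*(\omega)u^*\rangle \\
&= \langle\omega,\, u^*u s_*(\omega)x^*xs_*(\omega)u^*u\rangle \\
&= \langle\omega,\, s_*(\omega)x^*x s_*(\omega)\rangle = \|xs_*(\omega)\|^2_\omega,
\end{align*}
where the final step uses $\omega(s_*(\omega)ys_*(\omega))=\omega(y)$, valid for any positive normal $\omega$ since $l_*(\omega)=r_*(\omega)=s_*(\omega)$. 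Thus $\phi(u,\omega)$ is an isometry on the dense subspace $\mathbb{E}_\omega\subset\ol{\mathbb{E}}_\omega$, so it extends uniquely to an isometric embedding; surjectivity onto the dense subspace of the target (hence unitarity) comes from the observation that every generator $yuu^*\in\mathbb{E}_{I_{*u}\omega}$ is the image of $(yus_*(\omega),\omega)$ under $\phi(u,\omega)$.

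For (c), the unit axiom is transparent since $s_*(\omega)\in\mathcal{U}(\M)$, $I_{*s_*(\omega)}\omega=\omega$, and $s_*(\omega)^2=s_*(\omega)$, so $\phi(s_*(\omega),\omega)$ is the identity of $\ol{\mathbb{E}}_\omega$. For composition, a composable pair is $(u_1,\omega)$ and $(u_2,I_{*u_1}\omega)$ with $u_2^*u_2=u_1u_1^*$; direct substitution in the definition of $I_{*u}$ gives $I_{*u_2}I_{*u_1}=I_{*u_2u_1}$, so the groupoid product is $(u_2u_1,\omega)$ (and one checks $u_2u_1\in\mathcal{U}(\M)$ using $u^*uu^*=u^*$). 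Tracking supports with the partial-isometry identities, one then computes
\[
\phi(u_2,I_{*u_1}\omega)\circ\phi(u_1,\omega)(xs_*(\omega),\omega)=(xu_1^*u_2^*,I_{*u_2u_1}\omega)=\phi(u_2u_1,\omega)(xs_*(\omega),\omega).
\]
Faithfulness (d) follows by evaluating $\phi(u_1,\omega)=\phi(u_2,\omega)$ on the element $(s_*(\omega),\omega)$: this gives $s_*(\omega)u_1^*=s_*(\omega)u_2^*$, and since $s_*(\omega)u_i^*=u_i^*u_iu_i^*=u_i^*$, we conclude $u_1=u_2$.

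The main obstacle is pure bookkeeping: keeping straight the distinction between $u^*u$ and $uu^*$ in the various support identities, and noticing that the composition law in the inner-action groupoid involves the reversed product $u_2u_1$ (reflecting $I_{*u_2}I_{*u_1}=I_{*u_2u_1}$). All underlying manipulations reduce to the defining relations $uu^*u=u$, $u^*uu^*=u^*$, and the support characterization of positive normal functionals, so no deeper $W^*$-algebra input is required.
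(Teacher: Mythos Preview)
Your proof is correct and follows essentially the same route as the paper: verify the isometry identity via $\langle I_{*u}\omega,y\rangle=\langle\omega,u^*yu\rangle$ together with $u^*u=s_*(\omega)$, check the multiplicativity $\phi(u_2,I_{*u_1}\omega)\circ\phi(u_1,\omega)=\phi(u_2u_1,\omega)$, and deduce faithfulness by evaluating at $x=s_*(\omega)$. Your write-up is in fact slightly tidier in two places---you explicitly note why the image lands in the correct fibre $\M s_*(I_{*u}\omega)$ and why the isometry is onto the dense subspace $\mathbb{E}_{I_{*u}\omega}$---while the paper instead spells out the inverse identity $\phi(u,\omega)^{-1}=\phi(u^*,I_{*u}\omega)$ and the commutativity of the base diagram; these are equivalent packagings of the same algebra.
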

\begin{proof}
The following sequence of equalities \be\label{D}\begin{array}{l}
\langle \phi(u,\omega)xs_*(\omega)|\phi(u,\omega)ys_*(\omega)\rangle_{I_{*u}\omega }=\\
=\langle{I_{*u}\omega,(xs_*(\omega)u^*)^*ys_*(\omega)u^*}\rangle=\\
=\langle{\omega,u^*us_*(\omega)x^*ys_*(\omega)u^*u}\rangle=\ \ \ \ \ \ \ \ \ \ \ \ \ \ \ \ \ \ \ \ \ \ \\
\ \ \ \ \ \ \ \ \ \ \ \ \
=\langle{\omega,(xs_*(\omega))^*ys_*(\omega)}\rangle=\langle{xs_*(\omega)|ys_*(\omega)}\rangle_{\omega}\end{array}\ee
shows that
$\phi(u,\omega): {\mathbb{E}}_{\omega}\to {\mathbb{E}}_{I_{*u}\omega}$ extends to isomorphism of Hilbert spaces.

For elements $(u,\omega),\ (v,\lambda)\in \mathcal{U}(\M)*_{}\M_*^+$ such
that $\Tt(v)=\Ss(u)$, i.e. $\ \omega=I_{*v}\lambda$ we have
$$\phi((u,\omega)(v,\lambda))(xs_*(\lambda,)\lambda)=\phi(uv,\lambda)(xs_*(\lambda),\lambda)=
(xs_*(\lambda)(uv)^*,I_{*uv}\lambda)=$$
$$=\phi(u,\omega)(xs_*(\lambda)v^*,I_{*u}\lambda )= (\phi(u,\omega)\circ(\phi(v,\lambda))(xs_*(\lambda),\lambda))$$
for any $(xs_*(\lambda),\lambda)\in {\mathbb{E}}_\lambda$. Thus we obtain
\be\label{B}\phi((u,\omega)(v,\lambda))=\phi(u,\omega)\circ\phi(v,\lambda).\ee
We recall that  $(u,\omega)(v,\lambda)$ is  the product of $(\mathcal{U}(\M)*_{}\M_*^+)^{(2)}$
defined by (\ref{*prod}).

 One can easily check that
for $\phi(u,\omega)$ and $\phi(u^*,I_{*u}\omega)$ we have
$$(\phi(u^*,I_{*u}\omega)\circ\phi(u,\omega))( xs_*(\omega),\omega)=\phi(u^*,I_{*u}\omega)(xs_*(\omega)u^*,I_{*u}\omega)=$$
$$=(xs_*(\omega)u^*u, I_{*u^*u}\omega)=(xs_*(\omega), \omega)$$
for any $(xs_*(\omega),\omega)\in \mathbb{E}_\omega$. The above
shows that $$\phi(u^*,I_{*u}\omega)\circ\phi(u,\omega)=id|_{\mathbb{E}_\omega}.$$ In the similar
way we prove that
$$\phi(u,\omega)\circ\phi(u^*,I_{*u}\omega)=id|_{\mathbb{E}_{I_{*u}}\omega}.$$ Thus we get
\be\label{C}(\phi(u,\omega))^{-1}=\phi(u^*,I_{*u}\omega).\ee

 For any
$(u,\omega)\in \mathcal{U}(\M)*_{}\M_*^+$ one has
$$(id\circ \Ss)(u,\omega)=id(\omega)=\omega,$$
$$(id\circ \Tt)(u,\omega)=id(I_{*u}\omega)=I_{*u}\omega$$
and
$$(\Ss\circ \phi)(u,\omega)(xs_*(\omega),\omega)=\Ss(\phi(u,\omega)(xs_*(\omega),\omega))=\omega, $$
$$(\Tt\circ \phi)(u,\omega)(xs_*(\omega),\omega)=\Tt(\phi(u,\omega)(xs_*(\omega),\omega))={I_{*u}}\omega,$$
which shows that $id\circ \Ss=\Ss\circ \phi$ and $id\circ \Tt=\Tt\circ
\phi$, i. e. the diagram (\ref{diag}) is commutative. The above
shows that $\phi$ is a groupoid morphism.

From \be\label{wierna}
\phi(u,\omega)=\phi(u',\omega').\ee
 we find that\\
$$\omega=\omega'$$
\be\label{wierna1}s_*(\omega)=u^*u=u'^*u'=s_*(\omega')\ee and
\be\label{wierna2}xs_*(\omega)u^*=xs_*(\omega)u'^*\ee for any $x\in\M$.
Setting $x=s_*(\omega)$ in (\ref{wierna2})  we prove  that from (\ref{wierna}) it
follows $$(u,\omega)=(u',\omega ').$$ Thus $\phi$ is the faithful
morphism of groupoids.
\end{proof}

In order to obtain a faithful $W^*$-representation of $\M$, see \cite{sakai}, we recall that $\mathbb{E}_\omega$ is a left $W^*$-ideal of $\M$.  Hence one has $W^*$-representation $\ol{\rho_\omega}:\M\to L^\infty(\ol{\mathbb{E}_\omega})$ of $\M$ in the $W^*$-algebra of bounded operators on Hilbert space  $\ol{\mathbb{E}_\omega}$, defined by the continuous extension of
\be \rho_\omega(x)ys_*(\omega):=xys_*(\omega),\ee
 where $x\in\M$ and $ys_*(\omega)\in \mathbb{E}_\omega$.

 Let us denote by $L^2\Gamma(\mathbb{E},\M^+_*)$ the Hilbert space of square summable sections $\psi:\M_*^+\to\ol{\mathbb{E}}$
 \be\label{258}\sum_{\omega\in\M^+_*}\norm{\psi(\omega)}^2<\infty\ee
  of the GNS bundle.

 The direct sum
 \be\label{ro}\ol{\rho}:=\bigoplus_{\omega\in\M^+_*}\ol{\rho_\omega}\ee
 is a faithful $W^*$-representation $\ol{\rho}:\M\to L^\infty(L^2\Gamma(\mathbb{E},\M^+_*))$ of $\M$ in the Hilbert space $L^2\Gamma(\mathbb{E},\M^+_*)$. Recall that the $^*$-homomorphism of $W^*$-algebras is a $W^*$-homomorphism  if it is a map $\phi:\M_1\to\M_2$ continuous with respect to the $\sigma(\M_1,\M_{1*})$-topology and  $\sigma(\M_2,\M_{2*})$-topology.

 Let $\ol\rho(\M)'$ be the commutant of $\ol\rho(\M)$ in the operator algebra $L^\infty(L^2\Gamma(\mathbb{E},\M^+_*))$.
 \begin{thm}\label{propLambda}
 There exists a  groupoid monomorphism
 \be\label{Lambda}\Lambda:\mathcal{U}(\M)*_{}\M_*^+\to \U(\ol\rho(\M)')\ee
 of the action groupoid $\mathcal{U}(\M)*_{}\M_*^+$ into the groupoid of partial isometries $\U(\ol\rho(\M)')$ of the $W^*$-algebra $\ol\rho(\M)'$.\end{thm}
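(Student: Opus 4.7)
The plan is to use the fibre isomorphisms $\phi(u,\omega)$ from Theorem \ref{rep} to construct, for each $(u,\omega)\in\mathcal{U}(\M)*\M_*^+$, a partial isometry $\Lambda(u,\omega)$ on the global Hilbert space $L^2\Gamma(\mathbb{E},\M_*^+) = \bigoplus_{\eta\in\M_*^+}\ol{\mathbb{E}}_\eta$ that lies in the commutant $\ol{\rho}(\M)'$. Explicitly, define
\be\label{Lambdaop}
(\Lambda(u,\omega)\psi)(\eta) := \begin{cases}\phi(u,\omega)(\psi(\omega)) & \text{if } \eta = I_{*u}\omega,\\ 0 & \text{otherwise},\end{cases}
\ee
for $\psi\in L^2\Gamma(\mathbb{E},\M_*^+)$. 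Writing $P_\eta$ for the orthogonal projection onto the summand $\ol{\mathbb{E}}_\eta$, the isometric property of $\phi(u,\omega)$ established in (\ref{D}) makes $\Lambda(u,\omega)$ a bounded partial isometry on $L^2\Gamma(\mathbb{E},\M_*^+)$ with $\Lambda(u,\omega)^*\Lambda(u,\omega) = P_\omega$ and $\Lambda(u,\omega)\Lambda(u,\omega)^* = P_{I_{*u}\omega}$.

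The key step is verifying that $\Lambda(u,\omega)\in\ol{\rho}(\M)'$. The projections $P_\eta$ themselves lie in $\ol{\rho}(\M)'$: by (\ref{ro}) one has $\ol{\rho}(x) = \bigoplus_\eta \ol{\rho_\eta}(x)$, diagonal with respect to the direct sum decomposition, so each fibre $\ol{\mathbb{E}}_\eta$ is $\ol{\rho}(\M)$-invariant. It remains to check $\ol{\rho}(x)\Lambda(u,\omega) = \Lambda(u,\omega)\ol{\rho}(x)$ for every $x\in\M$. Both sides vanish on fibres $\eta\neq\omega$ (on the right) and have image in the $I_{*u}\omega$-fibre (on the left); on $\ol{\mathbb{E}}_\omega$ a direct computation using the defining formula (\ref{phi}) gives
\[
\ol{\rho}(x)\Lambda(u,\omega)(ys_*(\omega)) = x(ys_*(\omega)u^*) = xys_*(\omega)u^* = \phi(u,\omega)(xys_*(\omega)) = \Lambda(u,\omega)\ol{\rho}(x)(ys_*(\omega)),
\]
where the middle equality is associativity of multiplication in $\M$. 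Continuous extension to the completions finishes the check; the conceptual point is that $\ol{\rho}(\M)$ acts by left multiplication whereas $\phi$ acts by right multiplication, so they commute for free.

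Finally the groupoid morphism axioms and injectivity of $\Lambda$ transfer from those of $\phi$. The base map $\M_*^+\to\mathcal{L}(\ol\rho(\M)')$ is $\omega\mapsto P_\omega$, injective because distinct $\omega$ label orthogonal summands. Compatibility with source and target follows from the partial isometry identities above; the product identity $\Lambda((u,\omega)(v,\lambda)) = \Lambda(u,\omega)\Lambda(v,\lambda)$ is (\ref{B}) read inside $L^\infty(L^2\Gamma)$, and $\Lambda(u,\omega)^* = \Lambda(u^*,I_{*u}\omega)$ follows from (\ref{C}). For injectivity, if $\Lambda(u,\omega) = \Lambda(u',\omega')$ then equating initial projections forces $\omega = \omega'$, after which restriction to $\ol{\mathbb{E}}_\omega$ yields $\phi(u,\omega) = \phi(u',\omega)$; the faithfulness of $\phi$ proved at the end of Theorem \ref{rep} then gives $u = u'$.

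The main obstacle is the commutation check of the second paragraph: it relies in an essential way on $\phi(u,\omega)$ being implemented by right multiplication by $u^*$, which is exactly what the GNS bundle construction (\ref{preHil}) provides, since its fibres are \emph{left} $\M$-ideals on which $\ol{\rho}(\M)$ acts by left multiplication. Once this dichotomy between the left action of $\M$ and the right action of $\mathcal{U}(\M)*\M_*^+$ is recognised, everything else is bookkeeping on top of Theorem \ref{rep}.
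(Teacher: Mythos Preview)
Your proof is correct and follows the same route as the paper: define $\Lambda$ as the composite of $\phi$ from Theorem~\ref{rep} with the extension-by-zero map sending a fibre isomorphism $\ol{\mathbb{E}}_\omega\to\ol{\mathbb{E}}_{I_{*u}\omega}$ to a partial isometry of $L^2\Gamma(\mathbb{E},\M_*^+)$, and then verify that the image lies in $\U(\ol\rho(\M)')$. Your argument is in fact more careful than the paper's at the crucial point: the paper asserts that \emph{every} element of the structural groupoid $\G(\ol{\mathbb{E}})$ extends to a partial isometry in $\ol\rho(\M)'$, citing only the $\ol\rho$-invariance of the fibres, whereas invariance of the fibres alone does not force an arbitrary unitary between two fibres to intertwine the representations $\ol{\rho_{\omega_1}}$ and $\ol{\rho_{\omega_2}}$; what actually makes the commutation work is precisely the observation you isolate, namely that $\phi(u,\omega)$ is right multiplication by $u^*$ while $\ol\rho(x)$ is left multiplication by $x$.
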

 \prf{Any element $e_{\omega_1}^{\omega_2}\in \G(\mathbb{E})_{\omega_1}^{\omega_2}\subset \G(\M)$ of the structural  groupoid  which maps $e_{\omega_1}^{\omega_2}:\ol{\mathbb{E}}_{\omega_1}\to \ol{\mathbb{E}}_{\omega_2}$ has the  extension  to a partial isometry of the Hilbert space $L^2\Gamma(\mathbb{E},\M^+_*)$. Since Hilbert subspaces $\ol{\mathbb{E}}_{\omega_1}$ and $\ol{\mathbb{E}}_{\omega_2}$ of  $L^2\Gamma(\mathbb{E},\M^+_*)$ are invariant with respect to the representation $\ol\rho$ we conclude that $e_{\omega_1}^{\omega_2}\in\U(\ol\rho(\M)') $. Hence we obtain a monomorphism $\iota: \G(\mathbb{E})\hookrightarrow \U(\ol\rho(\M)')$. Thus  Proposition \ref{rep} implies that $\Lambda=\iota\circ \phi$ is a groupoid monomorphism.
 }

Finally let us note that the projection \be\label{coscos} pr_1:\mathcal{U}(\M)*_{}\M_*^+\to \U(\M)\ \cong\ \U(\ol\rho(\M)')\ee
 of $\mathcal{U}(\M)*_{}\M_*^+$ on the first component of the product $\mathcal{U}(\M)\times\M_*^+$ defines a morphism of the groupoids. The map (\ref{coscos}) is an epimorphism of groupoid if and only if $\M$ is a $\sigma$-finite $W^*$-algebra.

\section{Inverse semigroups in $\mathcal{U}(\M)$}

Usually one  finds in specialized papers the constructions of the operator algebras from the representation of given inverse semigroup. See for example \cite{pater}, where also one can find other references. Here we will follow conversely obtaining inverse semigroup from given $W^*$-algebra.

Let us recall that the \textbf{inverse semigroup} is a semigroup S in which for each element $s\in S$ exists a unique element $t\in S$ such that
\be\label{invers} sts=s \qquad tst=t.\ee The element $t$ is denoted by $s^*$ and the map $s\to s^*$ is involution on $S$.
An important role  plays the subset $E(S)\subset S$ of idempotents of the inverse semigroup $S$. Its easy to check that for any $s\in S$  elements $ss^*$ and $s^*s$ belong to $E(S)$. For any elements $e, f \in E(S)$ we have $e=e^*$ and $ef=fe$.

 It is a well-known fact in the theory of the inverse semigroups, which follows directly from the
Vagner-Preston  theorem \cite{howie}, that any inverse semigroup
$\mathcal{S}$ is composed  by the partial isometries in the Hilbert space
$l^2(\mathcal{S})$, see \cite{pater}. On the other hand any $W^*$-algebra
$\M$ is also realized by bounded operators in some Hilbert space $\H$, i.e.
$\M\subset L^\infty (\H)$. These facts motivate us to investigate the inverse
semigroups which are composed by partial isometries of $W^*$-algebra, i.e. $S\subset \U(\M)$.

\begin{rem}\label{rem1}
The product $uv$ of two partial isometries $u,v\in\mathcal{\U}(\M)$ is a partial
isometry if and only if the initial projection of $u$ and the final projection of $v$
 commute, see \cite{H-W}. \end{rem}

 Let us assume that partial isometries $u, v\in S$ satisfy property (\ref{invers}). Then one has
\be\label{invers2}u^*=u^*uu^*=u^*uvuu^*=u^*uvv^*vv^*vuu^*=\qquad\qquad\ee $$\qquad\qquad=vv^*u^*uvuu^*v^*v
=v(v^*(u^*uu^*)v^*)v=v.$$
The equality (\ref{invers2}) shows that for $S\subset \U(\M)$ the element $v$ inverse to $u$ in sense of (\ref{invers}) is unique and it is defined as $u^*$.

So, let us assume that $\mathcal{S}\subset \M$. Hence the set $E$ of
idempotents of $\mathcal{S}$ is given by $E=\Ss(\mathcal{S})=\Tt(\mathcal{S})\
\subset\ \mathcal{L}(\M)$ and $\mathcal{S} \subset \mathcal{U}(E)$ where \be\label{UE}
\mathcal{U}(E):=\Ss^{-1}(E)\cap\Tt^{-1}(E) \ee is the full subgroupoid of the
groupoid of partial isometries $\mathcal{U}(\M)$. Since the product $uv$ in
$\mathcal{S}$  is defined for all $(u,v)\in \mathcal{S}\times \mathcal{S}$ it is
defined also for $(u,v)\in\mathcal{S}^{(2)}=\{(u,v):\ \ \Ss(u)=\Tt(v)\}$. Therefore,
the inverse semigroup $\mathcal{S}\subset \mathcal{U}(E)\subset
\mathcal{U}(\M)$ can be considered as a subgroupoid of the groupoid
$\mathcal{U}(\M)$. In this context  the question when $\mathcal{U}(E)$ is the
inverse semigroup arises.

\begin{prop}\label{propUE}
The subgroupoid $\mathcal{U}(E)$  complemented by $\{0\}$ is an inverse semigroup
if and only if: \ben[(i)]
\item for  $p,q\in E$ one has $pq=qp\in E$,
\item for  $u\in \mathcal{U}(E)$ and $p\in E$ one has $upu^*\in E$ or $upu^*=0$.
\een
\end{prop}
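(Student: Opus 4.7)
The plan is to combine Remark~\ref{rem1} (a product of partial isometries is a partial isometry iff the initial projection of the first factor and the final projection of the second commute) with the observation that every $p\in E$ sits inside $\mathcal{U}(E)$ as the partial isometry equal to its own source and target.

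For the forward implication, I would assume $\mathcal{U}(E)\cup\{0\}$ is an inverse semigroup. Given $p,q\in E$, closure of the product forces $pq\in\mathcal{U}(E)\cup\{0\}$; Remark~\ref{rem1} then gives $pq=qp$, and the resulting selfadjoint idempotent $pq$ can only lie in $\mathcal{U}(E)$ if it equals its own source and target, i.e.\ if it belongs to $E$, which yields (i). For (ii), given $u\in\mathcal{U}(E)$ and $p\in E$, (i) ensures that $u^*u$ commutes with $p$, so $up$ is a partial isometry by Remark~\ref{rem1}, and a short computation using $p(u^*u)p=p(u^*u)=u^*up$ and $uu^*u=u$ shows $(upu^*)^2=upu^*=(upu^*)^*$. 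Closure of the semigroup again forces this projection into $\mathcal{U}(E)\cup\{0\}$, hence into $E\cup\{0\}$.

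For the converse, I assume (i) and (ii) and verify the inverse-semigroup axioms for $\mathcal{U}(E)\cup\{0\}$. Given $u,v\in\mathcal{U}(E)\setminus\{0\}$, condition (i) says $u^*u$ and $vv^*$ commute, and Remark~\ref{rem1} yields that $uv$ is a partial isometry. To place $uv$ back in $\mathcal{U}(E)\cup\{0\}$, I apply (ii) twice: to $v^*\in\mathcal{U}(E)$ and $u^*u\in E$, obtaining $(uv)^*(uv)=v^*(u^*u)v\in E\cup\{0\}$; and to $u$ and $vv^*\in E$, obtaining $(uv)(uv)^*=u(vv^*)u^*\in E\cup\{0\}$. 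Since the two supports of a partial isometry vanish simultaneously, either $uv\in\mathcal{U}(E)$ or $uv=0$. Associativity is inherited from $\M$, and the standard identities $uu^*u=u$, $u^*uu^*=u^*$ show that $u^*$ satisfies (\ref{invers}); uniqueness of the inverse is already established by the computation (\ref{invers2}) in the text.

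The main obstacle is the closure step in the backward direction: one must apply (ii) on both sides of $uv$, once to the adjoint of the right factor and once to the left factor, in order to land both supports simultaneously in $E\cup\{0\}$. Everything else is a routine assembly of Remark~\ref{rem1}, the partial-isometry identities, and the uniqueness computation (\ref{invers2}).
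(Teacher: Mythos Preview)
Your proof is correct and follows essentially the same route as the paper. The only minor differences are presentational: for the forward direction of (i) the paper invokes the standard fact that idempotents in an inverse semigroup form a commutative subsemigroup, while you derive commutativity via Remark~\ref{rem1}; and for (ii) the paper simply notes that closure gives $up\in\mathcal{U}(E)\cup\{0\}$ and then $upu^*=(up)(up)^*\in E\cup\{0\}$ as a final projection, without the detour through (i). In the backward direction you are more explicit than the paper in applying (ii) on both sides to place both supports of $uv$ in $E\cup\{0\}$, which the paper leaves implicit.
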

\prf{Let us assume that $\mathcal{U}(E)$ is an inverse semigroup. Then
$E\subset \mathcal{U}(E)$ consists of idempotents of $\mathcal{U}(E)$ and is
a commutative subsemigroup of $\mathcal{U}(E)$. If $p\in E$ and $u\in
\mathcal{U}(E)$ then $up\in \mathcal{U}(E)$. Thus $upu^*=up(up)^*\in E$.

Now, let us assume that properties (i) and (ii) of the proposition are valid. Then
for $u,v\in \mathcal{U}(E)$ we have
$$uv(uv)^*uv=uvv^*u^*uv=uu^*uvv^*v=uv.$$ So, $uv$ is a partial isometry and
$uv(uv)^*=uvv^*u^*\in E$. This shows that $uv\in \mathcal{U}(E)$. }

 From the
Proposition \ref{propUE} we conclude:
\begin{cor}
The partial isometries groupoid $\mathcal{U}(\M)$ is an inverse semigroup if
and only if $W^*$-algebra $\M$ is abelian.
\end{cor}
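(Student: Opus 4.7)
The corollary is essentially a direct application of Proposition \ref{propUE} to the largest admissible idempotent set, namely $E = \mathcal{L}(\M)$. The first step in my plan is to observe that with this choice one has $\mathcal{U}(E) = \mathcal{U}(\M)$, because by definition the source and target of any partial isometry $u \in \mathcal{U}(\M)$ (namely $u^*u$ and $uu^*$) lie in $\mathcal{L}(\M)$. Moreover the zero element $0 \in \M$ satisfies $0^*0 = 0 \in \mathcal{L}(\M)$, so the adjunction of $\{0\}$ in Proposition \ref{propUE} is already included in $\mathcal{U}(\M)$, and no bookkeeping is needed there.

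For the forward implication, I would assume $\mathcal{U}(\M)$ is an inverse semigroup and invoke condition (i) of Proposition \ref{propUE}, which forces $pq = qp$ for every pair of projections $p,q \in \mathcal{L}(\M)$. The key reduction then is that in a $W^*$-algebra commutativity of all projections implies commutativity of the whole algebra: by the spectral theorem every selfadjoint element $x = x^* \in \M$ is the $\sigma$-weak limit of linear combinations of its spectral projections (which belong to $\M$), hence commutes with every projection of $\M$, and therefore with every selfadjoint element of $\M$; the decomposition $x = \tfrac{1}{2}(x+x^*) + \tfrac{i}{2i}(x-x^*)$ extends the conclusion to arbitrary $x \in \M$.

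For the reverse implication, I would assume $\M$ abelian and verify conditions (i) and (ii) of Proposition \ref{propUE} directly for $E = \mathcal{L}(\M)$. Condition (i) is immediate since any product $pq$ of commuting projections is a projection. For condition (ii), given $u \in \mathcal{U}(\M)$ and $p \in \mathcal{L}(\M)$, commutativity of $\M$ gives $u p u^* = u u^* p$, which is the product of the two commuting projections $uu^* = \Tt(u)$ and $p$, hence lies in $\mathcal{L}(\M)$. Proposition \ref{propUE} then yields that $\mathcal{U}(\M)$ is an inverse semigroup.

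The only nonroutine step is the spectral-theoretic passage from ``all projections commute'' to ``$\M$ is abelian''; everything else is formal manipulation and an application of the previous proposition, so I expect no real obstacle beyond citing this standard $W^*$-algebra fact (see, e.g., \cite{sakai}, \cite{takesaki}).
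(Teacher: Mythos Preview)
Your proposal is correct and follows essentially the same line as the paper's proof: both directions hinge on the commutativity of projections and the spectral theorem. The only minor difference is in the reverse implication, where the paper bypasses Proposition~\ref{propUE} and simply checks directly that for abelian $\M$ and $u,v\in\mathcal{U}(\M)$ one has $uv(uv)^*uv = uvv^*u^*uv = uu^*uvv^*v = uv$, so $uv$ is a partial isometry; your route through conditions (i) and (ii) of the proposition achieves the same thing with a slightly more systematic (but equally short) argument.
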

\prf{If $\mathcal{U}(\M)$ is an inverse semigroup then the lattice $\mathcal{L}(\M)$ is a
Boolean one and $pq=qp$ for $p,q\in \mathcal{L}(\M)$. From this and from the spectral
theorem we obtain that $\M$ is an abelian $W^*$-algebra. If $\M$ is abelian
and $u,v\in \mathcal{U}(\M)$ then $$uv(uv)^*uv=uvv^*u^*uv=uu^*uvv^*v=uv.$$
It means that $\mathcal{U}(\M)$ is closed with respect to multiplication, i.e.
$uv\in \mathcal{U}(\M)$. So, $\mathcal{U}(\M)$ is an inverse semigroup.}

\begin{example}
Let us consider a family $\{p_i\}_{i\in I}$ of mutually orthogonal projections
$p_ip_j=\delta_{ij}p_i$ in $\M$. We define
$\mathcal{U}_{ij}:=\Tt^{-1}(p_i)\cap\Ss^{-1}(p_j)$, i.e. $u_{ij}\in \mathcal{U}_{ij}$
iff $u_{ij}^*u_{ij}=p_j$ and $u_{ij}u_{ij}^*=p_i$. So, for $u_{ij}\in
\mathcal{U}_{ij}$ and $u_{kl}\in \mathcal{U}_{kl}$ one has
\be\label{ex1}u_{ij}u_{kl}=\delta_{jk}u_{ik}u_{kl}\ee and  \be\label{ex1a}
u^*_{ij}\in \mathcal{U}_{ji}.\ee In the case when $\mathcal{U}_{ij}=\emptyset$
we will assume in (\ref{ex1}) and (\ref{ex1a}) by definition that $u_{ij}=0$.
Concluding we see that $\mathcal{U}(E)\cup \{0\}$, where $E=\{p_i\}_{i\in I}$, is
an inverse semigroup.

\end{example}

As a specialization of the above abstract example of the inverse semigroup let us take $I=\M_*^+$ and define $\U_{\omega '\omega}:=\U(\M)_{\omega}^{\omega '}$ for $\omega, \omega '\in\M_*^+$. Here we consider $\U(\M)_{\omega}^{\omega '}$ as a set of partial isometries in Hilbert space $L^2\Gamma(\mathbb{E},\M_*^+)$, see definition (\ref{258}).

\begin{example}
Let $V_{ij}\subset \mathcal{U}_{ij}$ be such that  $\bigcup_{i,j\in
I}V_{ij}\cup\{0\}$ is closed with respect to  (\ref{ex1}) and (\ref{ex1a}).
 Then $\bigcup_{i,j\in I}V_{ij}\cup\{0\}$ is the
 inverse subsemigroup of $\mathcal{U}(\{p_i\}_{i\in I})\cup\{0\}$.
\end{example}

\begin{example}
 In the case when $V_{ij}$ are one element subsets we will denote  the inverse semigroup
 $\bigcup_{i,j\in I}V_{ij}$
 by $\mathcal{V}(\{p_i\}_{i\in I})$. Note also that $\mathcal{V}(\{p_i\}_{i\in I})$ is the
 $(I\times I)$-matrix unit in $p\M p$, where $p=\sum_{i\in I} p_i$. For the definition of the
 $(I\times I)$ - matrix unit see e.g.
\cite{kadison}, \cite{takesaki}. In the below  we will call $\mathcal{V}(\{p_i\}_{i\in
I})$ the \emph{matrix unit inverse semigroup}.

\end{example}

The notion of the  $(I\times I)$  - matrix unit plays an important role in studying
of $W^*$-algebra structure for particular of I type $W^*$-algebras, e.g. see
\cite{kadison}. Recall that for any central projection $z$ of a $W^*$-algebra $\M$ of type I
 one has decomposition $z=\Sigma_{i\in I}p_i$ given by a
family $\{p_i\}_{i\in I}$ of mutually orthogonal, equivalent abelian projections.
The cardinal number $\alpha:=\sharp I$ does not depend on a choice of
$\{p_i\}_{i\in I}$. Hence we conclude that for any $\alpha$-homogeneous direct
summand $z\M$ of the I type $W^*$-algebra $\M$ one has abelian $(I\times
I)$-matrix unit inverse semigroup  $\mathcal{V}(\{p_i\}_{i\in I})$. The inverse
semigroups  $\mathcal{V}(\{p_i\}_{i\in I})$ are isomorphic for different choices
of the decompositions $z=\Sigma_{i\in I}p_i$.

\bigskip

Let $B_{part}(I)$ be the inverse semigroup of partial bijections of the set of
indices $I$ which appeared in the above examples. Let $\mathfrak{I}(I)\subset
B_{part}(I)$ be the subset of partial bijections $\varphi:A \ \tilde{\to}\  B$, where
$A, B\subset I$, such that \be\label{parbij} U_{\varphi{(i)}i}\neq\emptyset.\ee
 for each $ i\in A$.\\
We denote by $\mathcal{U}(I)\subset \mathcal{U}(\M)$  the subset of partial
isometries defined by \be \label{par isom} u_\varphi :=\sum_{i\in A}
u_{\varphi{(i)}i},\ee where $u_{\varphi{(i)}i}\in U_{\varphi(i)i}$.

\begin{thm}\label{3.3}\ben[(i)]
\item
The subset $\mathcal{U}(I)\subset \mathcal{U}(\M)$ is an inverse semigroup of
partial isometries.
\item
The subsemigroup $E$ of idempotents of $\mathcal{U}(I)$ is a semilattice
 of orthogonal projections which are defined by
\be p_A:=\sum_{i\in A}p_i,\qquad A\subset I. \ee
\item
The map \be\label{surj} \phi: \mathcal{U}(I)\ \ni\ u_\varphi\to \varphi \in
\mathfrak{I}(I) \ee is a surjective morphism of the  inverse semigroups. \een
\end{thm}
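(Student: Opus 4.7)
The plan is to prove the three parts in order, with all the work ultimately reducing to direct manipulation of the matrix unit relations \eqref{ex1}, \eqref{ex1a}. The key algebraic identity driving the proof is that for a partial bijection $\varphi \colon A \to B$ in $\mathfrak{I}(I)$, the sum $u_\varphi = \sum_{i\in A} u_{\varphi(i)i}$ is a partial isometry whose initial and final projections are $p_A = \sum_{i\in A} p_i$ and $p_B = \sum_{i\in B} p_i$ respectively; this follows since $u_\varphi^* u_\varphi = \sum_{i,j\in A} u_{i\varphi(i)} u_{\varphi(j)j} = \sum_{i,j\in A} \delta_{\varphi(i)\varphi(j)} u_{ij} = \sum_{i\in A} p_i$ by injectivity of $\varphi$, and symmetrically for $u_\varphi u_\varphi^*$ by injectivity of $\varphi^{-1}$.

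For (i), I would compute the product of two elements $u_\varphi$, $u_\psi \in \mathcal{U}(I)$ with $\varphi \colon A \to B$ and $\psi \colon C \to D$:
\begin{equation*}
u_\varphi u_\psi = \sum_{i\in A,\, j\in C} u_{\varphi(i)i} u_{\psi(j)j} = \sum_{j\in \psi^{-1}(A\cap D)} u_{\varphi(\psi(j))j},
\end{equation*}
using \eqref{ex1}. Since the indices $\varphi(\psi(j))$ and $j$ arise from nonempty $V_{\cdot\cdot}$ (as the corresponding matrix-unit elements are obtained as products of nonzero matrix units), the composition $\varphi\circ\psi$ lies in $\mathfrak{I}(I)$, and $u_\varphi u_\psi = u_{\varphi\circ\psi} \in \mathcal{U}(I)$. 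Together with $u_\varphi^* = \sum_{i\in A} u_{i\varphi(i)} = u_{\varphi^{-1}}$ from \eqref{ex1a}, the inverse-semigroup axioms $u_\varphi u_\varphi^* u_\varphi = u_\varphi$ and $u_\varphi^* u_\varphi u_\varphi^* = u_\varphi^*$ follow by reapplying the product formula; uniqueness of the inverse follows from the general calculation \eqref{invers2}.

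For (ii), from the product formula an element $u_\varphi$ is idempotent iff $\varphi\circ\varphi = \varphi$ as partial bijections, which forces $\varphi$ to be the identity on some subset $A\subset I$. Then $u_{\mathrm{id}_A} = \sum_{i\in A} u_{ii} = \sum_{i\in A} p_i = p_A$, since $u_{ii} = p_i$ in the matrix unit. The relations $p_A p_B = p_{A\cap B}$ and $p_A^* = p_A$ are immediate from mutual orthogonality of the $p_i$, so $E$ is the semilattice $\{p_A : A \subset I\}$.

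For (iii), surjectivity of $\phi$ is built into the definitions of $\mathcal{U}(I)$ and $\mathfrak{I}(I)$. The morphism property $\phi(u_\varphi u_\psi) = \varphi\circ\psi = \phi(u_\varphi)\circ\phi(u_\psi)$ follows directly from the product identity computed in step (i), and well-definedness of $\phi$ is ensured because the partial bijection $\varphi$ is recovered from $u_\varphi$ via $p_A = u_\varphi^* u_\varphi$, $p_B = u_\varphi u_\varphi^*$, and $p_{\varphi(i)} u_\varphi p_i = u_{\varphi(i)i}$. The only real point where care is needed, and which I would treat as the main technical obstacle, is verifying that in the product $u_\varphi u_\psi$ the effective domain $\psi^{-1}(A\cap D)$ coincides with the domain assigned to $\varphi\circ\psi$ as a partial bijection in $\mathfrak{I}(I)$, i.e.\ that the nonemptiness condition \eqref{parbij} is automatically propagated under composition; this is where the hypothesis that the $V_{ij}$ form a matrix unit (and therefore are closed under the multiplication \eqref{ex1}) is essential.
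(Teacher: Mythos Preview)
Your approach is the same as the paper's: both proofs reduce everything to the matrix-unit relations \eqref{ex1}, \eqref{ex1a}, compute the product $u_\varphi u_\psi$ as a sum indexed by the domain of the composite partial bijection, and read off the initial and final projections $p_A$, $p_B$ of $u_\varphi$ directly.

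There is, however, a slip in your argument for (ii). You assert that $u_{ii}=p_i$ ``in the matrix unit,'' but in the definition of $\mathcal{U}(I)$ the summands $u_{\varphi(i)i}$ are arbitrary elements of $U_{\varphi(i)i}$; in particular $u_{ii}$ may be any unitary in $p_i\M p_i$, not necessarily $p_i$ itself. Consequently ``$\varphi\circ\varphi=\varphi$'' is only a necessary condition for $u_\varphi$ to be idempotent, not a sufficient one. The clean way to identify the idempotents---which you in fact already carried out in your opening paragraph---is to note that in an inverse semigroup every idempotent has the form $uu^*$, and your computation $u_\varphi^*u_\varphi=\sum_{i\in A}p_i=p_A$ then shows that the idempotents are exactly the $p_A$. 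This is precisely the paper's argument for (ii); your alternative route through $\varphi\circ\varphi=\varphi$ is not needed and, as written, does not close.
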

\prf{  \ben[(i)]
\item  Taking another partial bijection $\psi:C\ \tilde{\to}\  D$ of
$C,D\subset I$  and the corresponding partial isometry $u_\psi$  we find that
\be\label{mno} u_\psi u_\varphi =\left(\sum_{j\in C}
u_{\psi{(j)}j}\right)\left(\sum_{i\in A} u_{\varphi{(i)}i}\right) =\ \ \ \ \ \ \ \ \ \ \ \ \ \ \ \ \
\ \ \ee $$ \ \ \ \ \ \ \ \ \ \ \ \ \ \ \ \ \ \ \ \ \ \ \ = \sum_{k\in \varphi^{-1}(B\cap C)}
u_{(\psi\circ\varphi)(k)\varphi(k)} u_{\varphi(k)k}, $$ \be\label{par inv}
u^*_\varphi= \sum_{i\in A}u^*_{\varphi(i)i}=\sum_{j\in B}u^*_{j\varphi^{-1}(j)},\ee
where \be \label{bij} \psi\circ\varphi:\varphi^{-1}(B\cap C)\ \tilde{\to}\  \psi(B\cap
C)\ee and \be \varphi^{-1}:B \ \tilde{\to}\  A.\ee  The product
$u_{(\psi\circ\varphi)(k)\varphi(k)} u_{\varphi(k)k}$ belongs to
$U_{(\psi\circ\varphi)(k) k}$ and $u^*_{\varphi(i)i}\in U_{\varphi^{-1}(j)j}$, where
$j=\varphi(i)$. Thus $u_\psi u_\varphi\in \mathcal{U}(I)$  and $u^*_\varphi\in
\mathcal{U}(I)$.

\item It follows from (\ref{mno}) and (\ref{par inv}) that
$$u^*_\varphi u_\varphi =u_{\varphi^{-1}}u_\varphi=u_{id_A}=\sum_{i\in A}p_i$$
where $id_A:A\to A$ is identity partial bijection. Similarly we can show that
$u_\varphi u^*_\varphi=\sum_{i\in B}p_i$.

\item From (\ref{mno}) and (\ref{par inv}) follows that $\mathfrak{I}(I)$ is closed with respect
to the superposition of partial bijections. The fact, that the map  $\phi$
defined in (\ref{surj}) is a surjective morphism, follows from  (\ref{mno}) and
(\ref{par inv}) and definition of $\mathfrak{I}(I)$.

\een}

\begin{rem}Taking partial isometries $u_\varphi\in\mathcal{U}(I)$ where $\varphi$ is a bijection between one elements sets  we find that  the matrix unit inverse semigroup  $\mathcal{U}(\{p_i\}_{i\in I})$ is an  inverse subsemigroup   of $\mathcal{U}(I)$.
\end{rem}

 In order to obtain an  interesting examples of the inverse semigroup $\mathcal{V}(\{p_i\}_{i\in I})$,
where $I=\{1,2,...,N\}$ or $I=\mathbb{N}$,
 let us take a properly infinite $W^*$-algebra $\M$ . It follows from the Halving Theorem
 (see \cite{kadison}) that there exists a family $\{p_i\}_{i\in I}$ of mutually orthogonal
  projections in $\M$ such that
  \be\label{cuntz} p_i\sim \mathbf{1}\quad {\rm  and}\quad \sum_{i\in I}p_i=\mathbf{1},\ee
 where $I=\{1,2,...,N\}$  or $I=\N\cup\{\infty\}$. Since of $p_i\sim \mathbf{1}$ there are isometries
 $s_i\in \mathcal{U}(\M)$ satisfying $s_i^*s_i=\mathbf{1}$ and $s_is_i^*=p_i$. One has
 $s_is_j^*s_ks_l^*=\delta_{jk}s_is_l^*$ and $(s_is_j^*)^*=s_js_i^*$.
 Hence  $V_{ij}=\{s_is_j^*\}$ is the self-adjoint system
 of the  $I\times I$ matrix units, i.e. it is the inverse  semigroup. \\
For $N>1$ the $\mathcal{V}(\{p_i\}_{i\in I})$ is an inverse subsemigroup
$\mathcal{V}_N\subset S_N$ of the Cuntz inverse semigroup $S_N$.   The
\emph{Cuntz inverse semigroup}  $S_N$    consists of  elements
$$s_{i_1}...s_{i_k}s_{j_1}^*...s_{j_l}^*$$ where $i_1,...,i_k,j_1,...,j_l\in I$, e.g.
see \cite{pater}. Denoting $k$-tuples by $\alpha:=(i_1,...,i_k)$ and the isometry
$s_{i_1}...s_{i_k}$ by $s_\alpha$ we find that the product and the involution in
Cuntz inverse semigroup $S_N$ are given by \be\begin{array}{l}
(s_\alpha s_{\beta\gamma}^*)(s_\beta s_\delta^*)=s_\alpha s_{\gamma\delta}^*\\
(s_\alpha s_{\beta}^*)(s_{\beta\gamma} s_\delta^*)=s_{\alpha\gamma}
s_{\delta}^*\end{array}\ee and \be(s_\alpha s_\beta^*)^*=s_\beta s_\alpha^*\ee
respectively, see \cite{renault}. The inverse semigroup $S_N$ generates the
$C^*$-subalgebra of $\M$ isomorphic to Cuntz algebra $\mathcal{O}_N$ \cite{Cuntz}.
 Note that for $N=1$ one obtains Toeplitz inverse semigroup.

 \begin{prop}\label{C}
 $W^*$-algebra $\M$ is properly infinite if and only if  it contains
 a Cuntz inverse semigroup $S_N$ such that $\mathbf{1}\in S_N$, where $N\in \mathbb{N}$ or $N=\infty$.
  \end{prop}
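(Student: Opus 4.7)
The plan is to prove both implications directly. The forward direction is essentially packaged in the discussion preceding the proposition, so I would just extract it cleanly. The reverse direction is where the real content lies: I need to exhibit, for every nonzero central projection $z \in Z(\M)$, a proper subprojection of $z$ equivalent to $z$, witnessing $z$ as an infinite projection and hence $\M$ as properly infinite.

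For the $(\Rightarrow)$ direction, I would invoke the Halving Theorem (\cite{kadison}): in a properly infinite $W^*$-algebra, $\mathbf{1}$ admits a decomposition $\mathbf{1} = \sum_{i\in I} p_i$ into mutually orthogonal projections each equivalent to $\mathbf{1}$, for any prescribed $N = |I| \geq 2$ (including $N = \infty$). Choosing isometries $s_i \in \M$ realizing these equivalences ($s_i^* s_i = \mathbf{1}$, $s_i s_i^* = p_i$), the set of all finite products $s_\alpha s_\beta^*$ is, by the relations already verified in the text, the Cuntz inverse semigroup $S_N$. Since $\mathbf{1} = s_1^* s_1 \in S_N$, we are done.

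For the $(\Leftarrow)$ direction, assume $\M \supset S_N$ with $\mathbf{1} \in S_N$ and $N \geq 2$. Extract the generating isometries $s_1, s_2, \dots \in \M$ with $s_i^* s_i = \mathbf{1}$ and $p_i := s_i s_i^*$ mutually orthogonal. Given an arbitrary nonzero $z \in Z(\M)$, form $v := z s_1$; by centrality $v \in z\M z$, and a short computation gives $v^* v = z$ and $v v^* = z p_1$, so $z \sim z p_1$ inside $z\M z$. The crucial step is to argue the strict inequality $z p_1 < z$. If instead $z p_1 = z$, then $z \leq p_1$, so $z p_2 = 0$; using $s_2 = p_2 s_2$ and centrality, this forces $s_2 z = 0$ and therefore $z = s_2^* s_2 z = 0$, a contradiction. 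Hence $z p_1 < z$, so $z$ is infinite in $z\M z$, and since $z \in Z(\M)$ was arbitrary, $\M$ is properly infinite.

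The main obstacle is precisely this strict inequality $z p_1 < z$, and it is exactly here that the assumption $N \geq 2$ (equivalently, the existence of a second orthogonal isometry $s_2$) is used in an essential way: with only one generator one cannot exclude $z \leq p_1$, which explains why the Toeplitz case ($N = 1$) is insufficient. Everything else in the argument is bookkeeping with the defining relations $s_i^* s_j = \delta_{ij}\mathbf{1}$ and the centrality of $z$.
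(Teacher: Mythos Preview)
Your proof is correct and follows essentially the same route as the paper. For the reverse direction the paper writes the decomposition $z=\sum_{i\in I}p_iz$ with $p_iz\sim z$ and concludes directly that $z$ is equivalent to a proper subprojection, whereas you isolate the single equivalence $z\sim zp_1$ and verify properness $zp_1<z$ by a short contradiction using the second isometry $s_2$; this is exactly the missing step the paper leaves implicit in the phrase ``not trivial subprojection'', so your version is the same argument made more explicit.
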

  \prf{ Summing up the facts presented above we find that any properly infinite $W^*$-algebra $\M$ contains a Cuntz inverse semigroup $S_N$ and unit of $\M$ belongs to $S_N$.

   Let us assume that $S_N\subset\M$ and $\mathbf{1}\in S_N$. Then from (\ref{cuntz}) we have
   \be\label{cuntz2} z=\sum_{i\in I}p_iz\qquad {\rm and}\qquad p_iz\sim z\ee for any central projection $z$ of $\M$. From (\ref{cuntz2}) follows that any central projection $z$ is equivalent to its not trivial subprojection. This means that $\M$ is properly infinite.
}

\bigskip

We recall that  a \emph{Clifford inverse semigroup} is an inverse
semigroup $\mathcal{S}$ whose  idempotents are central, see \cite{howie}.

\begin{prop}\label{centr}
Let $E\subset \mathcal{L}(\M)$ be a semilattice. We define \be\label{centr}
\mathcal{S}:=\bigcup_{p\in E} Z(p\M p)\cap \mathcal{U}(\M), \ee where $Z(p\M
p)$ is the center of the $W^*$-subalgebra $p\M p\subset\M$. Then
$\mathcal{S}$ is an inverse semigroup if $\mathcal{S}\subset \mathcal{U}(E)$.
Additionally  $\mathcal{S}$  is a Clifford inverse semigroup.
\end{prop}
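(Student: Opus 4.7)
The plan is to reduce everything to a single key commutation lemma, then use it both to establish closure of $\mathcal{S}$ under the $W^*$-algebra product and to show that any two elements of $\mathcal{S}$ commute; the Clifford property will then drop out for free.

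The first step is to prove the \emph{commutation lemma}: if $u\in Z(p\M p)\cap\U(\M)$ with $p\in E$ and $q\in E$, then $uq = qu$. My reasoning would be that because $E$ is a semilattice, $pq = qp \in E$, and then $pq = pqp$ lies in $p\M p$, so centrality of $u$ gives $u(pq) = (pq)u$; combined with $u = pu = up$ (the identity holding for any $u \in p\M p$), this rearranges to $uq = qu$. I expect this to be the main technical hurdle of the proof; the rest is bookkeeping.

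Given the lemma, for $u\in Z(p\M p)\cap\U(\M)$ and $v\in Z(q\M q)\cap\U(\M)$ I would exploit that $v$ commutes with $p$ (by the lemma with roles swapped) to decompose $v = pvp + (1-p)v(1-p)$, then use $u(1-p) = (1-p)u = 0$ to kill the off-diagonal piece. Centrality of $u$ in $p\M p$ then gives $u\cdot pvp = pvp\cdot u$, so $uv = pvp\cdot u$; the same calculation with $u$ and $v$ swapped yields $vu = pvp\cdot u$, hence $uv = vu$ and $\mathcal{S}$ is commutative under the $\M$-multiplication. To see that $uv \in \mathcal{S}$, I would observe that the normality of $u, v$ (they sit in commutative $^*$-subalgebras) gives $u^*u = uu^* = e_u$ and $v^*v = vv^* = e_v$; the standing hypothesis $\mathcal{S}\subset\U(E)$ places $e_u, e_v$ in $E$, and the lemma plus semilattice closure yield $(uv)^*(uv) = e_u e_v \in E$, a projection. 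By Remark \ref{rem1}, $uv$ is a partial isometry. A short check shows $uv\in pq\M pq$, and $uv$ commutes with every $x\in pq\M pq$ (such $x$ belong simultaneously to $p\M p$ and $q\M q$, hence commute with both $u$ and $v$), so $uv\in Z(pq\M pq)\cap\U(\M)\subset\mathcal{S}$ since $pq\in E$.

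Closure under the $^*$-involution is immediate because $Z(p\M p)$ is itself a $^*$-subalgebra, and the uniqueness of $u^*$ as the inverse in the sense of (\ref{invers}) has already been recorded in (\ref{invers2}). This completes the verification that $\mathcal{S}$ is an inverse semigroup. The Clifford property is then automatic: since $\mathcal{S}$ is commutative under $\M$-multiplication, every idempotent of $\mathcal{S}$ is central. The one subtlety to keep in mind throughout is the interplay between the hypothesis $\mathcal{S}\subset\U(E)$ (which is what forces $e_u, e_v\in E$) and semilattice closure of $E$ (which is what makes $e_u e_v$ land in $E$); together they are exactly what is needed to keep the product $uv$ inside $\U(E)\cap\mathcal{S}$.
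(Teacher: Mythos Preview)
Your proof is correct, and the commutation lemma you isolate is exactly the step the paper uses at the end to verify the Clifford property. The difference is one of organization and strength: you front-load that lemma and push it further to show that the entire semigroup $\mathcal{S}$ is \emph{commutative} under the $\M$-product, from which the Clifford property is immediate. The paper instead argues closure directly --- writing $uv(uv)^*uv = uvv^*u^*uv = uu^*uvv^*v = uv$ using only that the supports $p=u^*u=uu^*$ and $q=v^*v=vv^*$ (which lie in $E$ by the standing hypothesis) commute --- then observes $uv=(pqupq)(pqvpq)\in pq\M pq$ and checks centrality via $p\M p\cap q\M q=pq\M pq$, and only afterwards proves $qu=uq$ for $q\in E$. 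Your route is a bit longer but yields a genuinely stronger conclusion (commutativity of $\mathcal{S}$, not just centrality of idempotents); the paper's route is more direct for the stated claim. One phrasing remark: when you write ``the same calculation with $u$ and $v$ swapped yields $vu = pvp\cdot u$'', what you actually mean is computing $vu$ using the \emph{same} decomposition $v=pvp+(1-p)v(1-p)$ and again killing the $(1-p)$ piece via $(1-p)u=0$; literally swapping the roles of $u$ and $v$ would give $vu=quq\cdot v$ instead, which also simplifies to $uv$ but via a different route.
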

\prf{Assuming $\mathcal{S}\subset  \mathcal{U}(E)$ we obtain from (\ref{centr})
that for  $u,v\in\mathcal{S}$ there are $p, q\in E$ such that $p=u^*u=uu^*$ and
$q=v^*v=vv^*$. From
$$uv(uv)^*uv=uvv^*u^*uv=uu^*uvv^*v=uv$$
and from
$$uv=upqv=(pqupq)(pqvpq)$$
it follows that $uv$ is a partial isometry and $uv\in pq\M pq$. Since $u\in Z(p\M
p)$, $v\in Z(q\M q)$ and $p\M p\cap q\M q=pq\M pq$ we have $uv\in Z(pq\M
pq)$. The above shows that $\mathcal{S}$ is an inverse semigroup. In order to
prove $qu=uq$ for any $u\in \mathcal{S}$ and any $q\in E$ we observe that
$u\in Z(p\M p)\cap \mathcal{U}(\M)$ for some $p\in E$. Since $pq\in Z(p\M
p)\cap \mathcal{U}(\M)$ one obtains $qu=qpu=uqp=uq$. }
\begin{rem}
The condition $\mathcal{S}\subset  \mathcal{U}(E)$ from Proposition \ref{centr}
is fulfilled, for example, if from $p\in E$ and $q\leqslant p$ follows $q\in E$.
\end{rem}

Now we will describe the inverse semigroups $\mathcal{S}\subset
\mathcal{\U}(\M)$ generated by a single partial isometry $u\in\mathcal{\U}(\M)$.
According  to \cite{howie} we will call such an inverse semigroup a
\emph{monogenic inverse semigroup}. By definition, see \cite{H-W}, the partial
isometry $u\in\M$ is a \emph{power partial isometry} if  $u^k$ is a partial
isometry for all $k\in \mathbb{N}$.

We conclude from  Remark \ref{rem1} that \be\label{monog1} up_{k+1}=p_k u
\quad {\rm{and}}\quad u^*p_k=p_{k+1}u^*\ee \be\label{monog2} q_{l+1}u=uq_l
\quad {\rm{and}}\quad q_l u^*=u^* q_{l+1}\ee where \be\label{monog3}
p_k:=u^{*k} u^k, \quad q_l:=u^lu^{*l}.\ee

\begin{lem}
Projections $p_k$ and $q_l$ defined in (\ref{monog3}) have the following
properties: \be\label{monog4} p_kp_l=p_{max\{k,l\}}\ee \be\label{monog5}
g_kq_l=q_{max\{k,l\}}\ee \be\label{monog6} p_kq_l=q_lp_k.\ee\end{lem}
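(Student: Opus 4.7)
The plan is to first iterate the intertwining relations (\ref{monog1}) and (\ref{monog2}) to the higher powers of $u$ and $u^*$, and then use these to compute each of the three products directly. By a routine induction on $j$, the identities $up_{k+1}=p_k u$ and $u^*p_k=p_{k+1}u^*$ give
\[
u^j p_{k+j}=p_k u^j, \qquad u^{*j}p_k=p_{k+j}u^{*j},
\]
and similarly from (\ref{monog2}) one gets $u^j q_l = q_{l+j}u^j$ and $q_l u^{*j}=u^{*j}q_{l+j}$. These four relations will do all the work.

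For (\ref{monog4}), I would write $p_{k+1}=u^{*(k+1)}u^{k+1}=u^{*k}(u^*u)u^k=u^{*k}p_1 u^k$ and then use $p_1 u^k = u^k p_{k+1}$ to get $p_{k+1}=u^{*k}u^k p_{k+1}=p_k p_{k+1}$. Thus $p_{k+1}\leq p_k$ in the projection order, and an induction on $|k-l|$ shows that the sequence $\{p_k\}$ is decreasing, which yields $p_k p_l = p_{\max\{k,l\}}$. For (\ref{monog5}) the argument is dual: $q_{l+1}=u^l(uu^*)u^{*l}=u^l q_1 u^{*l}$, and using $u^l q_1 = q_{l+1}u^l$ one obtains $q_{l+1}=q_{l+1}u^l u^{*l}=q_{l+1}q_l$, so $q_{l+1}\leq q_l$ and the sequence $\{q_l\}$ is decreasing.

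For the commutation (\ref{monog6}), the two iterated intertwinings $u^k q_l = q_{l+k}u^k$ and $q_l u^{*k}=u^{*k}q_{l+k}$ give
\[
p_k q_l = u^{*k}(u^k q_l) = u^{*k} q_{l+k} u^k = (q_l u^{*k})u^k = q_l p_k,
\]
so $p_k$ and $q_l$ share the common expression $u^{*k}q_{l+k}u^k$ and therefore commute.

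There is really no hard step: the content of the lemma is entirely encoded in the intertwining relations already established in (\ref{monog1})--(\ref{monog2}), and the only care needed is to bookkeep the shifts in the indices $k\to k+j$ when pushing $u^j$ or $u^{*j}$ past a $p$ or $q$. The one place where one could slip is the direction of the shift---$u$ lowers the index of $p$ but raises the index of $q$ (and vice versa for $u^*$)---so I would verify the base cases of each induction explicitly before iterating.
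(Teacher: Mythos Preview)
Your proof is correct. For (\ref{monog4}) and (\ref{monog5}) you do essentially what the paper does: the paper writes $p_k=u^{*l}p_{k-l}u^l$ and pushes $p_{k-l}$ to the right via (\ref{monog1}) to obtain $p_k=p_lp_k$, which is the same monotonicity argument as your $p_{k+1}=p_kp_{k+1}$, only organized slightly differently (the phrase ``induction on $|k-l|$'' is unnecessary once you have $p_{k+1}\le p_k$ for all $k$).

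For (\ref{monog6}) you take a genuinely different route. The paper does not compute at all: it observes that $u^k\cdot u^l=u^{k+l}$ is a partial isometry by hypothesis, and then invokes Remark~\ref{rem1} (the Halmos--Wallen criterion) to conclude that the initial projection $p_k$ of $u^k$ and the final projection $q_l$ of $u^l$ commute. Your argument instead iterates (\ref{monog2}) to get $u^kq_l=q_{l+k}u^k$ and $q_lu^{*k}=u^{*k}q_{l+k}$, and then reads off $p_kq_l=u^{*k}q_{l+k}u^k=q_lp_k$ directly. Your version is more self-contained (it does not appeal to the cited result) and yields the explicit common form $u^{*k}q_{l+k}u^k$, while the paper's version is a one-line conceptual shortcut once Remark~\ref{rem1} is in hand.
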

 \prf{
Assuming $k>l$ we obtain \be\label{monog7}
p_k=u^{*k}u^k=u^{*l}p_{k-l}u^l=p_lp_k\ee where the last equality in
(\ref{monog7}) follows from (\ref{monog1}). Similarly
we prove that $q_k=q_lq_k$.\\
Since $v^ku^l=u^{k+l}$ is a partial isometry the projections $p_k=u^{*k} u^k$
and $q_l=u^l u^{*l}$ commute, see Remark \ref{rem1}. This proves
(\ref{monog6}). }

\begin{thm}\label{monogthm}
The partial isometry $u\in\mathcal{\U}(\M)$ generates a monogenic inverse
semigroup $\mathcal{S}_{\langle u\rangle}$ if and only if $u$ is a power partial isometry.\\
Every partial isometry from $\mathcal{ S}_{\langle u\rangle}$ can be expressed
in the form \be\label{monog8} p_kq_lu^m \quad {\rm{or}} \quad p_kq_lu^{*m}\ee
where $k, l, m\in \mathbb{N}$, while $u^0=1$
and $u^{*0}=1$. In particular the set of idempotents of $\mathcal{ S}_{\langle
u\rangle}$ consists of projections $p_kq_l$.
\end{thm}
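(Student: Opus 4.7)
The plan is to handle the direction $(\Rightarrow)$ immediately, then establish the normal form description by induction on word length, which simultaneously yields $(\Leftarrow)$ and the idempotent description.

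For the implication $(\Rightarrow)$: if $\mathcal{S}_{\langle u \rangle} \subset \U(\M)$ is an inverse semigroup, then each power $u^k$ is a product of the generator $u$, hence lies in $\mathcal{S}_{\langle u \rangle}$ and is a partial isometry. Thus $u$ is a power partial isometry.

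For $(\Leftarrow)$, assume $u$ is a power partial isometry, so the $p_k$ and $q_l$ defined in (\ref{monog3}) are projections obeying (\ref{monog1})--(\ref{monog2}) and (\ref{monog4})--(\ref{monog6}). I argue by induction on word length that every word in $\{u, u^*\}$ reduces to a normal form $p_k q_l u^m$ or $p_k q_l u^{*m}$. Both generators are in normal form via $u = p_0 q_0 u^1$ and $u^* = p_0 q_0 u^{*1}$ with $p_0 = q_0 = \I$. For the inductive step, right-multiplying a normal form by $u$ or $u^*$ produces another normal form via the basic reductions $u^m u^* = q_m u^{m-1}$ and $u^{*m} u = p_m u^{*(m-1)}$ for $m \geq 1$ (each obtained from $uu^* = q_1$ or $u^*u = p_1$ by transporting the resulting projection through $u^{m-1}$ or $u^{*(m-1)}$ using (\ref{monog1})--(\ref{monog2})), followed by absorption into $q_l$ or $p_k$ via (\ref{monog4})--(\ref{monog5}).

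I then check that each normal form is a partial isometry. For $a = p_k q_l u^m$, the pairwise commutation of $p_k, q_l, q_m$ (by (\ref{monog6})) gives $aa^* = p_k q_l q_m q_l p_k = p_k q_{\max(l,m)}$, whence $aa^*a = p_k q_{\max(l,m)} q_l u^m = p_k q_{\max(l,m)} u^m$, which equals $p_k q_l u^m = a$ because $q_{\max(l,m)} u^m = q_l u^m$ (both equal $u^m q_{(l-m)^+}$ by (\ref{monog2})). The case $a = p_k q_l u^{*m}$ is symmetric. Hence the generated subsemigroup lies in $\U(\M)$ and is closed under $*$ (as $*$ reverses a word in $u, u^*$), so by the discussion culminating in (\ref{invers2}) it is indeed an inverse semigroup.

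For the idempotents, recall that in any inverse semigroup $E(\mathcal{S}) = \{ss^* : s \in \mathcal{S}\}$. The computation just performed shows $(p_k q_l u^m)(p_k q_l u^m)^* = p_k q_{\max(l,m)}$, and symmetrically $(p_k q_l u^{*m})(p_k q_l u^{*m})^* = p_{\max(k,m)} q_l$; both are manifestly of the form $p_{k'} q_{l'}$. The principal obstacle is purely the bookkeeping in the inductive step, where the four cases (word ending in $u^m$ or $u^{*m}$; right multiplier $u$ or $u^*$) each trigger a different commutation identity from the preceding Lemma, though each reduction is algorithmic and introduces no new conceptual content.
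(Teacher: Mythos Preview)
Your proposal is correct and follows essentially the same approach as the paper: the forward implication is identical, and for the converse both you and the paper reduce an arbitrary word in $u,u^*$ to the normal form $p_kq_lu^m$ or $p_kq_lu^{*m}$ using the commutation relations (\ref{monog1})--(\ref{monog6}), then verify it is a partial isometry by computing $aa^*$ as a product of commuting projections. Your version is simply more explicit in organizing the reduction as an induction on word length and in checking $aa^*a=a$ directly (the paper stops at observing that $aa^*=p_kq_{\max\{l,m\}}$ is a projection, which already suffices).
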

\prf{If $\mathcal{S}_{\langle u\rangle}\subset \mathcal{\U}(\M)$ is a monogenic
inverse semigroup then  $u^k\in \mathcal{S}_{\langle u\rangle}$. So it is a
partial
isometry. So, $u$ is power partial isometry.\\
Now let us assume that for any $k\in \mathbb{N}$ element $u^k$ is a partial
isometry. Then the relations  (\ref{monog1}),  (\ref{monog2}), (\ref{monog4}),
(\ref{monog5}) and (\ref{monog6}) are valid. Using these relations one
transforms the arbitrary element \be\label{monog9}
u^{k_1}u^{*l_1}u^{k_2}u^{*l_2}...u^{k_N}u^{*l_N} \ee to the product of three
elements \be\label{monog10}  p_kq_lu^m \quad {\rm{or}} \quad
p_kq_lu^{*m}\ee which satisfy
$$p_kq_lu^m(p_kq_lu^m)^*=p_kq_lq_m=p_kq_{max\{l,m\}}$$
or $$p_kq_lu^{*m}(p_kq_lu^{*m})^*=p_kq_lp_m=q_lp_{max\{k,m\}}.$$

So, they are partial isometries. In such a way we show that the arbitrary
element (\ref{monog9})  generated by $u$ is a partial isometry of the form
(\ref{monog8}) and it is an idempotent if and only if  $m=0$. }

\begin{cor}\label{mon}
Each element of $\mathcal{S}_{\langle u\rangle}$  can be presented in the form
\be\label{monog11} u^ku^{*l}u^m\ee where $0\leqslant k\leqslant l$, $0\leqslant
m \leqslant l$ and $l>0$.
\end{cor}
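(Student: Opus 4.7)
The plan is to reduce to the normal forms supplied by Theorem~\ref{monogthm}, which already expresses every element of $\mathcal{S}_{\langle u\rangle}$ as $p_Kq_Lu^M$ or $p_Kq_Lu^{*M}$. It then suffices to rewrite each such expression as a triple $u^ku^{*l}u^m$ satisfying $0\leq k\leq l$, $0\leq m\leq l$ and $l>0$. The only ingredients needed are the definitions $p_K=u^{*K}u^K$ and $q_L=u^Lu^{*L}$, the commutation $p_Kq_L=q_Lp_K$ from the lemma, the sliding identities $p_iu^j=u^jp_{i+j}$ and $u^{*j}p_i=p_{i+j}u^{*j}$ (which follow by iterating \eqref{monog1}), together with the consequence that $u^jp_i=u^j$ whenever $i\leq j$, obtained from $u^jp_i=u^{j-i}(u^iu^{*i})u^i=u^{j-i}q_iu^i=u^{j-i}u^i=u^j$.

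For the positive-power case, my first attempt would be the triple $(k,l,m)=(L,K+L,K+M)$, and a direct computation confirms it:
\[
u^Lu^{*(K+L)}u^{K+M}=u^Lu^{*L}u^{*K}u^Ku^M=q_Lp_Ku^M=p_Kq_Lu^M.
\]
The conditions $k\leq l$ and (generically) $l>0$ are immediate, while $m\leq l$ reduces to $M\leq L$. When instead $M>L$, I would use $q_Lu^M=u^Lu^{*L}u^M=u^Lp_Lu^{M-L}$ and then slide the resulting projection through the remaining powers of $u$ via $p_iu^j=u^jp_{i+j}$ to obtain $p_Kq_Lu^M=u^Mp_{K+M}=u^Mu^{*(K+M)}u^{K+M}$; the alternative triple $(M,K+M,K+M)$ then works, with $l=K+M\geq 1$ guaranteed by $M>L\geq 0$.

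The negative-power case $p_Kq_Lu^{*M}$ is handled symmetrically. When $K\geq M$, partial cancellation and sliding yield the representation $u^Lu^{*(K+L)}u^{K-M}$, i.e.~the triple $(L,K+L,K-M)$. When $K<M$, the projection $p_K$ is absorbed since $p_Ku^{*M}=u^{*M}$ for $M\geq K$ (the adjoint statement of $u^jp_i=u^j$ for $i\leq j$), and the element collapses to $q_Lu^{*M}=u^Lu^{*(L+M)}$, giving the triple $(L,L+M,0)$. The main difficulty I anticipate is not any single algebraic step but the bookkeeping among these four subcases together with the verification of $l>0$; the latter can only fail when the element equals $\mathbf{1}$, a situation that forces $u$ to be unitary, an isometry or a co-isometry, in which case one of the trivial representations $\mathbf{1}=uu^*=u^1u^{*1}u^0$ or $\mathbf{1}=u^*u=u^0u^{*1}u^1$ supplies the needed form.
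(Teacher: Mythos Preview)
Your proof is correct and follows essentially the same approach as the paper: start from the normal forms $p_Kq_Lu^M$ and $p_Kq_Lu^{*M}$ of Theorem~\ref{monogthm} and rewrite them using the relations \eqref{monog1}--\eqref{monog6}. The paper's own proof is a one-line citation of these relations without spelling out the case split; your explicit triples $(L,K+L,K+M)$, $(M,K+M,K+M)$, $(L,K+L,K-M)$, $(L,L+M,0)$ and the separate handling of the identity are exactly the bookkeeping the paper leaves to the reader.
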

 \prf{One obtains (\ref{monog11}) from (\ref{monog8})
using  (\ref{monog1}),  (\ref{monog2}), (\ref{monog4}), (\ref{monog5}) and
(\ref{monog6}).}

Since an arbitrary monogenic inverse semigroup $\mathcal{S}$ is isomorphic to
some $\mathcal{S}_{\langle u\rangle}$ the statement of Corollary \ref{mon}  is
valid also for each monogenic inverse semigroup. We thus proved a statement known as the Gluskin theorem \cite{Gluskin}.

\begin{example}
If  the generator $u$ of $\mathcal{S}_{\langle u\rangle}$ is an isometry
(co-isometry), i.e. $u^*u=\mathbf{1}$ ($uu^*=\mathbf{1}$), then the monogenic inverse semigroup
$\mathcal{S}_{\langle u\rangle}$ is the Toeplitz inverse semigroup. In this case
any element of $\mathcal{S}_{\langle u\rangle}$ can be written in the form
$q_lu^m$  ($p_ku^m$).
\end{example}

Other examples of inverse semigroups are given in the following
\begin{prop}
A projection $p\in \M$ and a unitary element $u\in\M$ generate an inverse
semigroup $\mathcal{S}_{\langle p,u\rangle}\subset \M$  if and only if
\be\label{com} [p,u^kpu^{*k}]=0 \ee for $k\in \N$.
\end{prop}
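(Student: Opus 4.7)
The plan is to bring every word in the alphabet $\{p,u,u^{*}\}$ to a canonical normal form $u^{M}Q$, where $Q$ is built from the ``translated'' projections $p_{k}:=u^{k}pu^{*k}$, $k\in\Z$, and then to read off the inverse semigroup property from commutativity of the $p_{k}$'s. A first remark is that the hypothesis $[p,p_{k}]=0$ for all $k\in\N$ is equivalent to the pairwise commutativity $[p_{j},p_{k}]=0$ for all $j,k\in\Z$: conjugating by $u^{-j}$ gives $[p_{-j},p_{k-j}]=0$, and $k-j$ ranges over all of $\Z$. In particular, under the hypothesis any finite product $p_{j_{1}}\cdots p_{j_{n}}$ is itself a projection of $\M$.

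The second ingredient is a purely algebraic reduction. From $u^{-a}p_{j}u^{a}=p_{j-a}$ one has the sliding identity $p_{j}u^{a}=u^{a}p_{j-a}$; applied repeatedly, with no commutativity of the $p_{j}$'s needed, it pushes every power of $u$ in an arbitrary word $w=u^{a_{0}}pu^{a_{1}}p\cdots pu^{a_{n}}$ to the far left, yielding
\[
w \;=\; u^{a_{0}+a_{1}+\cdots+a_{n}}\cdot p_{-(a_{1}+\cdots+a_{n})}\,p_{-(a_{2}+\cdots+a_{n})}\cdots p_{-a_{n}}.
\]
With these facts the direction ``$\Leftarrow$'' is then immediate: commutativity of the $p_{j}$'s makes $Q:=p_{-(a_{1}+\cdots+a_{n})}\cdots p_{-a_{n}}$ a projection, and $w=u^{M}Q$ satisfies $ww^{*}w=u^{M}QQu^{-M}u^{M}Q=u^{M}Q=w$, so $w\in\mathcal{U}(\M)$. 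The adjoint $w^{*}=Qu^{-M}=u^{-M}(u^{M}Qu^{-M})$ is of the same shape, so $\mathcal{S}_{\langle p,u\rangle}$ is closed under the involution and is an inverse subsemigroup of $\mathcal{U}(\M)$.

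For the direction ``$\Rightarrow$'' I single out, for each $k\in\N$, the particular element $pu^{k}pu^{*k}=p\cdot p_{k}\in\mathcal{S}_{\langle p,u\rangle}$. If the semigroup consists of partial isometries this product of two projections must be a partial isometry; but for two projections $e,f$ in a $W^{*}$-algebra, $ef\in\mathcal{U}(\M)$ forces $ef=fe$ (since then $fef=(ef)^{*}(ef)$ is a projection, hence $fefef=fef$, and the identity $(ef-fef)^{*}(ef-fef)=2(fef-fefef)=0$ forces $ef=fef$; symmetrically $fe=fef$, so $ef=fe$). Applied with $e=p$, $f=p_{k}$ this yields $[p,p_{k}]=0$. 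The only genuinely delicate point in the whole argument is this two-projection lemma; everything else is combinatorial bookkeeping driven by the single sliding rule.
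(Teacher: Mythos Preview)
Your argument is correct and follows the same route as the paper: normalize any word in $p,u,u^{*}$ to the form $u^{M}Q$ with $Q$ a product of the translated projections $p_{k}$, then use pairwise commutativity of the $p_{k}$'s for the ``$\Leftarrow$'' direction and commutativity of idempotents for ``$\Rightarrow$''. The paper's converse is one line shorter --- it simply invokes the standard fact that idempotents of an inverse semigroup commute, rather than proving your two-projection lemma --- and note a small slip in that lemma: $(ef-fef)^{*}(ef-fef)=fef-fefef$, not $2(fef-fefef)$, though the conclusion $ef=fef=fe$ is unaffected.
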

\prf  {Using for any $k\in\N$ the following notation
$$u^k:=\left\{\begin{array}{lcl} u^k&{\rm for}&k\in \No\\
u^{*k}&{\rm for}&-k\in \N  \end{array}\right.$$
we observe that any element $x\in\M$ generated by the products of $p$ and $u$ has the following form
\be\label{x}
x=u^{l_1}pu^{l_2}pu^{l_3}\ ...\ pu^{l_{N-1}}pu^{l_{N}},
\ee
where $l_1,\ l_2\,...,l_N\in \mathbb{Z}$. Let us assume (\ref{com}). Then we have \be\label{com2}
[u^kpu^{-k},u^lpu^{-l}]=0
\ee
for any $k,l\in \mathbb{Z}$. From (\ref{com2}) we obtain that $x^2=x=x^*\ $ if and only if $\ l_1+l_2+...+l_N=0$. Thus $xx^*$ and $x^*x$ are idempotents for any choice of $l_1,\ l_2\,...,l_N\in \mathbb{Z}$. Hence elements of $\M$ given by (\ref{x}) generate the inverse semigroup which we denote by $\mathcal{S}_{\langle p,u\rangle}$. \\
If $\mathcal{S}_{\langle p,u\rangle}\subset \M$ is the inverse semigroup then
the elements $p$ and $u^k p u^{*k}$ commute since they are idempotents of
$\mathcal{S}_{\langle p,u\rangle}$. }

\begin{cor}
The orthogonal projections $p, 1-p \in \M$ and a unitary element $u\in \M$
generate an inverse semigroup $\mathcal{S}_{\langle p,1-p,u\rangle }$ if and
only if the condition (\ref{com}) is fulfilled.
\end{cor}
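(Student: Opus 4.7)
The plan is to reduce the statement to the previous proposition by showing that condition (\ref{com}), which involves only $p$, automatically controls the commutation relations that involve $1-p$ as well. Since $u$ is unitary, $uu^{*}=u^{*}u=\mathbf{1}$, and therefore
\be
u^{k}(1-p)u^{*k}=\mathbf{1}-u^{k}pu^{*k}
\ee
for every $k\in\Z$. Consequently, for any $e_{1},e_{2}\in\{p,1-p\}$ the commutator $[e_{1},u^{k}e_{2}u^{*k}]$ is a signed linear combination of $[p,u^{k}pu^{*k}]$ (plus trivial terms involving $\mathbf{1}$), so condition (\ref{com}) is equivalent to $[e_{1},u^{k}e_{2}u^{*k}]=0$ for all such choices and all $k\in\N$.

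Next, I would observe that an arbitrary word in the generators $p$, $1-p$, $u$, $u^{*}$ has the form
\be
x=u^{l_{1}}e_{1}u^{l_{2}}e_{2}\cdots e_{N-1}u^{l_{N}},
\ee
with $l_{i}\in\Z$ and $e_{i}\in\{p,1-p\}$, entirely analogous to (\ref{x}) in the previous proposition. The extended commutation relations derived in the previous paragraph allow each $e_{i}$ to be transported past any block $u^{l_{j}}\cdots u^{l_{i-1}}$ at the cost of conjugating it; by the same argument as in the proof of the preceding proposition, one concludes that $xx^{*}$ and $x^{*}x$ are idempotents, and hence $x$ is a partial isometry. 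Therefore the set of all such words is closed under the $*$-operation and is an inverse semigroup, which is $\mathcal{S}_{\langle p,1-p,u\rangle}$.

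Conversely, suppose $\mathcal{S}_{\langle p,1-p,u\rangle}$ is an inverse semigroup. Then $p$ and $u^{k}pu^{*k}$ lie in its set of idempotents, which is a commutative semilattice, so they commute; that is exactly condition (\ref{com}). Alternatively, one may simply invoke $\mathcal{S}_{\langle p,u\rangle}\subset\mathcal{S}_{\langle p,1-p,u\rangle}$ together with the previous proposition.

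The only mildly subtle point, and the main obstacle, is verifying that condition (\ref{com}) really does propagate to all four sign choices involving $p$ and $1-p$; once that lemma-like observation is in place, the rest is a transcription of the preceding proof. Nothing new is required beyond the unitarity of $u$ and the trivial identity $p(1-p)=0$.
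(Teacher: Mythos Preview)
Your proposal is correct and follows essentially the same approach as the paper. The paper's own proof is extremely terse---it simply notes that $\mathcal{S}_{\langle 1-p,u\rangle}$ is an inverse semigroup iff (\ref{com}) holds (by the preceding proposition applied to $1-p$), invokes $[p,1-p]=0$, and declares the conclusion---whereas you have made explicit the key observation that $u^{k}(1-p)u^{*k}=\mathbf{1}-u^{k}pu^{*k}$ propagates (\ref{com}) to all four choices $e_{1},e_{2}\in\{p,1-p\}$, which is exactly the content hidden behind the paper's one-line jump.
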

\prf{ The set $\mathcal{S}_{\langle 1-p,u\rangle }$ is an inverse semigroup if
and only if $p$ and $u$ satisfy the condition (\ref{com}). From this and from
$[p,1-p]=0$ we have that $\mathcal{S}_{\langle p,1-p,u \rangle}$ is the inverse
semigroup if and only if  (\ref{com}) is fulfilled. }

Finally let us give an example of inverse semigroup with a
remarkable significance  in quantum physics. To this end let us consider the
sequence of bounded operators  $a_1,a_2,..., a_N \in L^\infty(\H)$, where
$N\in \N$ or $N=\infty$, acting in the separable Hilbert space $\H$ and satisfying
the canonical anticommutation
relations (CAR) \be\label{CAR}\begin{array}{l} a_ia_j^*+a_j^*a_i=\delta_{ij}1\\
a_ia_j+a_ja_i=0\end{array}.\ee It follows from (\ref{CAR}) that the
annihilation operators $a_1,a_2,..., a_N$ and their adjoints
$a_1^*,a_2^*,..., a_N^*$ (i.e. the creation operators) are partial
isometries. The projections $P_i:=a_ia_i^*$, $\ i=1,2,...,N$, and
the complementary projections $Q_j:=1-P_j=a_j^*a_j$, $\ j=1,2,...,n$
commute. So, they generate a semilattice $E\subset \mathcal{L}(\M)$. \\
Any element $x\in \M$ generated by the products of the annihilation and creation operators has the following form
\be\label{elemCAR} x=cP_\alpha Q_\beta a^*_\gamma a_\delta,\ee where  $c\in \{-1,1\}$ and the products
$P_\alpha:=P_{\alpha_1}P_{\alpha_2}...P_{\alpha_k}$, $\ Q_\beta:=Q_{\beta_1}Q_{\beta_2}...Q_{\beta_l}$, $\
a^*_\gamma:=a^*_{\gamma_1}a^*_{\gamma_2}...a^*_{\gamma_m}$, $\ a_\delta:=a_{\delta_1}a_{\delta_2}...a_{\delta_n}$
 are enumerated by the increasing sequences of  indices from $\{1,2,...,N\}$ such that for
$A,B\in\{\alpha,\beta,\gamma,\delta\},\ A\neq B$ implies that $A\cap B=\emptyset$.

The product of the two such elements is given by \be
xx'=c''P_{\alpha''} Q_{\beta''} a^*_{\gamma''} a_{\delta''}, \ee
where $c''=cc'(-1)^{m-i+j-p+r}$ for $\gamma_i=\delta_j'$, $\
\delta_p=\gamma_r'$\\
$\alpha''=\alpha\cup\alpha'\cup(\delta\cap\gamma')\setminus(\delta'\cup\gamma)$\\
$\beta''=\beta\cup\beta'\cup(\delta'\cap\gamma)\setminus(\delta\cup\gamma')$\\
$\gamma''=\gamma\cup\gamma'\setminus(\delta\cup\delta')$\\
$\delta''=\delta\cup\delta'\setminus(\gamma\cup\gamma')$ \\
if $\alpha\cap\gamma'=\emptyset, \ \beta\cap\delta'=\emptyset, \ \alpha\cap\beta'=\emptyset, \
\beta\cap\alpha'=\emptyset,\ \gamma\cap\gamma'=\emptyset,\ \delta\cap\delta'=\emptyset$. In the opposite case one has
$xx'=0$.

 In particular the products of the conjugated elements
take the following forms:

\be \begin{array}{l}xx^*=cP_\alpha Q_\beta a^*_\gamma a_\delta
a^*_\delta a_\gamma Q_\beta P_\alpha
c=P_{\alpha\cup\beta}Q_{\beta\cup\gamma}\\
\\
x^*x=a^*_\delta a_\gamma Q_\beta P_\alpha P_\alpha Q_\beta
a^*_\gamma
a_\delta=P_{\alpha\cup\gamma}Q_{\beta\cup\delta}\end{array}. \ee

Concluding, we see that subset $\mathcal{S}\subset L^\infty(\H)$ consisting of
elements (\ref{elemCAR}) is an inverse semigroup. Let us call it the CAR
inverse semigroup.

\bigskip

Ending this section we go back to the Proposition \ref{propLambda} and describe a faithful representation of the inverse semigroup of local bisections $\mathfrak{B}_{loc}(\U(\M)*_{}\M_*^+)$ of the action groupoid  $\U(\M)*_{}\M_*^+$ in the $W^*$-algebra $\ol{\rho}(\M)'$. To this end let us note the local bisection $\sigma\in\mathfrak{B}_{loc}(\U(\M)*_{}\M_*^+)$ is a map
\be\label{locsemi}\sigma_{\Omega}:\Omega\ \ni\ \omega\mapsto\sigma_\Omega(\omega)=(\tilde{\sigma}(\omega),\omega)\ \in \ \U(\M)*_{}\M_*^+\ee
of $\Omega\subset\M_*^+$ which satisfies
$$\Ss(\sigma(\omega))=\omega,$$
$$\Tt(\sigma(\omega))=I_{*\tilde{\sigma}(\omega)}\omega.$$ Using groupoids monomorphism (\ref{Lambda}) we define the representation \be\label{locrep}\phi:\mathfrak{B}_{loc}(\U(\M)*_{}\M_*^+)\to\ol{\rho}(\M)'\ee
of $\mathfrak{B}_{loc}(\U(\M)*_{}\M_*^+)$ in $\ol{\rho}(\M)'$ as follows
\be\label{locrep2}\phi(\sigma_\Omega):=\sum_{\omega\in\Omega} (\iota\circ\phi)(\tilde{\sigma}(\omega),\omega).\ee
The idempotent corresponding to $\phi(\sigma_\Omega)$ is given by
$$\phi(\sigma_\Omega)^*\phi(\sigma_\Omega)=\sum_{\omega\in\Omega}id_\omega$$
where $id_\omega$ is the identity in $\ol{\mathbb{E}}_\omega\subset L^2\Gamma(\mathbb{E},\M_*^+).$

\section {Topologies and Banach manifold structure of groupoids $\G(\M)$ and $\U(\M)$}

There are the following locally convex topologies considered on  the $W^*$-algebra $\M$: the uniform topology, the
Arens-Mackey topology $\tau(\M,\M_*)$, the strong $^*$-topology
$s^*(\M,\M_*)$, the strong topology $s(\M,\M_*)$, the $\sigma$-weak topology
$\sigma(\M,\M_*)$, see e. g.\cite{sakai}. All these topologies define corresponding topologies on the groupoids $\G(\M)$ and $\U(\M)$. Hence, the natural question  arises for which of the topologies listed above the groupoids are  topological groupoids.

Let us start  from $\G(\M)$.
\begin{prop}
For a infinite-dimmentional $W^*$-algebra $\M$ the groupoid $\G(\M)$ is not a topological groupoid with respect to any  topology of $\M$ mentioned above.
\end{prop}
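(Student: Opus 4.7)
The plan is to exhibit a single sequence in $\G(\M)$ that simultaneously witnesses the failure of one structural map to be continuous for every topology on the list. Each of the Arens--Mackey, $s^*$-, strong and $\sigma$-weak topologies on $\M$ is coarser than the uniform topology, so it is enough to take a norm-convergent sequence $x_n\to x$ in $\G(\M)$ and verify that its image under the chosen structural map already diverges for the weakest topology in the list, the $\sigma$-weak topology; divergence in this topology automatically implies divergence in the finer ones. The natural maps to attack are the inverse $\iota\colon\G(\M)\to\G(\M)$ defined in (\ref{inverse}) and the source map $\Ss$ of (\ref{moment}); both are sensitive to any degeneration of the support projection $s(|x|)$, and a single short example exposes both.

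Concretely I would take $x_n := \tfrac{1}{n}\,\mathbf{1}$ and $x := 0$. Each $x_n\in\G(\M)$ is positive and invertible, with polar decomposition $x_n = \mathbf{1}\cdot x_n$, so $\iota(x_n)=n\,\mathbf{1}$ and $\Ss(x_n)=\Tt(x_n)=\mathbf{1}$. Moreover $0\in\G(\M)$ corresponds to $p=0$, giving $\iota(0)=0$ and $\Ss(0)=0$. Since $\|x_n\|=1/n\to 0$ the sequence $x_n$ converges to $0$ in the norm, and hence in all four weaker topologies. However, for any normal state $\omega\in\M_*^+$ one has $\langle\iota(x_n),\omega\rangle = n\langle\mathbf{1},\omega\rangle = n\to\infty$, so $\iota(x_n)$ does not converge in the $\sigma$-weak topology, let alone to $\iota(0)=0$; equally, $\Ss(x_n)=\mathbf{1}$ does not converge to $\Ss(0)=0$ since these two projections are separated by every normal state. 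Either reading already prevents $\G(\M)$ from satisfying the axioms of a topological groupoid in any of the five topologies listed.

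The only technical point to document with care is that each of the four weaker topologies is coarser than the uniform one, so that norm convergence of $x_n$ implies its convergence in each; this is the standard comparison recorded in \cite{sakai}. There is no genuine obstacle here: the infinite-dimensionality hypothesis plays no real role in the counterexample, and enters only as the overall setting of interest, so the proof reduces to the routine verification above.
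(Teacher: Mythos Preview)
Your argument is correct and is essentially the paper's own proof specialized to the projection $p=0$: the paper takes $x_n=p+\tfrac{1}{n}(1-p)$ for an arbitrary $p\in\mathcal{L}(\M)$, observes $\Ss(x_n)=\Tt(x_n)=\mathbf{1}$ while $x_n\to p$ uniformly with $\Ss(p)=p$, and concludes that the source and target maps are discontinuous in every listed topology. Your additional remark about $\iota(x_n)=n\mathbf{1}$ is a valid alternative witness but is not needed once the discontinuity of $\Ss$ is established, and your observation that the infinite-dimensionality hypothesis plays no role is also accurate.
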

\prf{Let us take $p\in \mathcal{L}(\M)$ and define $x_n\in \G(\M)$ by
$$x_n=p+\frac{1}{n}(1-p),\quad n\in\mathbb{N}.$$
One has $$\Ss(x_n)=\Tt(x_n)=1\quad {\rm and}\quad \Ss(p)=\Tt(p)=p.$$
Since the uniform limit of $x_n$  is $$p=\lim_{n\to\infty} \ x_n,$$
we see that source and target maps of $\G(\M)$ are not continuous. Thus we obtain that $\G(\M)$ is not a topological one. Note that the above consideration does not depend on the choice of topology on $\M$.
}
The case of the groupoid $\U(\M)$ is much better than that of $\G(\M)$. Let us begin our considerations from the uniform topology. Since all algebraic
operations in $\M$ and the $^*$-involution are uniformly continuous  and groupoid maps are expressed by these operations we conclude that  the
groupoid $\mathcal{U}(\M)$ is a topological groupoid  with
respect to the uniform topology. Let us remark also that $\U(\M)$ is uniformly closed in $\M$ and $\mathcal{L}(\M)$ is uniformly
closed in $\mathcal{U}(\M)$. Note also that the  set $\mathcal{U}(\M)^{(2)}=(\Ss\times
\Tt)^{-1}(\{(p,p):\;p\in \mathcal{L}(\M)\})$ is closed in $\mathcal{U}(\M)\times
\mathcal{U}(\M)$.

The groupoid of partial isometries $\mathcal{U}(\M)$ is not topological with
respect to the $\sigma(\M,\M_*)$-topology (the weak $^*$-topology) and with
respect to the $s(\M,\M_*)$-topology (the strong topology). The reason is that the
product map (\ref{prod})
is not continuous with respect to $\sigma(\M,\M_*)$-topology and the involution (\ref{inverse}) is not continuous with respect to the $s(\M,\M_*)$-topology.

The Arens-Mackey topology $\tau(\M,\M_*)$ coincides with the $s^*$-strong
topology $s^*(\M,\M_*)$ on the bounded parts of $\M$, see \cite{sakai}. So
both of them induce on $\mathcal{U}(\M)$ the same topology. So, without
loss of  generality we can restrict our consideration to the
$s^*(\mathcal{U}(\M))$-topology  of $\ \mathcal{U}(\M)$.

Let us take the closed unit ball $\B=\{x\in \M:\; \norm x\leqslant 1\}$ in
$\M$. The product map $\B\times \B\ \ni\ (x,y)\ \mapsto\ xy\ \in\ \B $ restricted
to $\B$  as well as the $^*$-involution are continuous  with respect to $s^*(\B,\M_*)$-topology. From the
above we conclude:

\begin{prop}
The  groupoid $\mathcal{U}(\M)$ of partial isometries is a topological
groupoid  with respect to the
$s^*(\mathcal{U}(\M),\M_*)$-topology.
\end{prop}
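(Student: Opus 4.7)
The plan is to verify the four standard conditions for a topological groupoid (continuity of source, target, inversion, and multiplication) by exploiting the containment $\U(\M)\subset\B$ and invoking the joint $s^*$-continuity of multiplication and the $s^*$-continuity of the $*$-involution on the unit ball, both of which were just recalled above the statement.

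First I would simplify the inverse map on partial isometries. Given $u\in\U(\M)$, its polar decomposition is $u=u\cdot|u|$ with $|u|=u^*u\in\mathcal{L}(\M)$, so $|u|^{-1}$ taken inside the corner $s(|u|)\M s(|u|)$ equals $u^*u$ itself, and (\ref{inverse}) collapses to $\iota(u)=u^*u\cdot u^*=u^*$. Hence $\iota:\U(\M)\to\U(\M)$ is nothing but the restriction of the $*$-involution of $\M$, which is $s^*(\M,\M_*)$-continuous, so $\iota$ is continuous for the induced $s^*(\U(\M),\M_*)$-topology.

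Next I would handle $\Ss$ and $\Tt$. By (\ref{moment}) and the support formulas (\ref{support}), $\Ss(u)=u^*u$ and $\Tt(u)=uu^*$. These are compositions of the $*$-involution with the algebra product, restricted to $\U(\M)\subset\B$. Since the product map $\B\times\B\to\B$ is jointly $s^*$-continuous and the involution is $s^*$-continuous, both $\Ss$ and $\Tt$ are continuous into $\M$, and hence into the subspace $\mathcal{L}(\M)$ with its induced $s^*$-topology.

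For the multiplication $\U(\M)^{(2)}\to\U(\M)$, one only needs to note that $\U(\M)^{(2)}$ carries the subspace topology of $\U(\M)\times\U(\M)\subset\B\times\B$, and the composition is the restriction of the algebra multiplication; joint $s^*$-continuity of the product on $\B\times\B$ yields the claim. The map is indeed well-defined into $\U(\M)$ because on composable pairs $u^*u=vv^*$, which commute trivially, so Remark \ref{rem1} applies.

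There is no serious obstacle here; the only subtlety is recognizing that for partial isometries the abstract inversion formula (\ref{inverse}) degenerates to the $*$-involution, which removes the need to argue continuity of an operator-theoretic inverse (something which would fail for $\G(\M)$, as the previous proposition already showed). Once this collapse is noted, all four groupoid structure maps become polynomial combinations of multiplication and involution restricted to the unit ball, and the result follows from the two continuity facts cited just above the statement.
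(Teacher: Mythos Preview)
Your proof is correct and is essentially the paper's own argument spelled out in full: the paper simply writes ``From the above we conclude'' after recording that the product and the $*$-involution are $s^*$-continuous on the unit ball $\B$, leaving the reader to unpack exactly the verifications you give. Your explicit observation that $\iota(u)=u^*$ on $\U(\M)$ is the key point that makes the inversion continuous (and is precisely what distinguishes $\U(\M)$ from $\G(\M)$, where the previous proposition shows the argument fails).
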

 Let us define on $\M_*\cong\{(p,\omega)\in
\mathcal{L}(\M)\times \M_*;\ \ p=r_*(\omega)\}$ (respectively
 $\M_*\cong\{(p,\omega)\in \mathcal{L}(\M)\times \M_*;\ \ p=l_*(\omega)\}$) the
topology $\mathfrak{T}_{\M_*}$  as the topology inherited from the product topology
of $\mathcal{L}(\M)\times \M_*$.  The moment map $r_*:\M_*\rightarrow \mathcal{L}(\M)$
(respectively  $l_*:\M_*\rightarrow \mathcal{L}(\M)$) is continuous with respect to
$\mathfrak{T}_{\M_*}$. Since the topology $\mathfrak{T}_{\M_*}$  of $\M_*$ is stronger
than the uniform topology of $\M_*$ the action (\ref{rag}) (respectively
(\ref{lag})) is also continuous with respect to  $\mathfrak{T}_{\M_*}$.

Let us define the set \be\label{P}\P:=\{\omega\in\M_*:\quad l_*(\omega)=r_*(\omega)\}.\ee
 We conclude from the Proposition (\ref{pred}) that subsets
 $\M_*^+\subset \M_*^h \subset \P(\M_*)\subset\M$ of positive normal functionals,
 selfadjoint functionals and $\P(\M)$ are  invariant with respect to the predual inner action
 $I_*:\mathcal{U}(\M)\times \M_*\to \M_*$. The groupoid  $\mathcal{U}(\M)$ acts on $\M_*^h$,
 $\M_*^+$ and $\P(\M)$ the actions are continuous with respect to their $\mathfrak{T}_{\M_*}$-topology.
Since
$$s_*(I_{*u} \omega)=s_*(u\omega u^*)=uu^*=\Tt(u)$$
$$
I_{*u}(I_{*v}\omega)=I_{*u}(v\omega v^*)=uv\omega v^*u^*=uv\omega (uv)^*=I_{*uv}\omega$$
$$I_{*\varepsilon(s_*(\omega))}\omega=I_{*s_*(\omega)}\omega=u^*u\omega u^*u=\omega$$
we see that the groupoid $\mathcal{U}(\M)$ acts on $\P(\M_*)$ in the
continuous way with respect to $\mathfrak{T}_{\M_*}$ topology of
$\P(\M_*)$.

Summarizing the above considerations  and applying the construction
presented in the Appendix  we have the following:

\begin{thm}\label{4.3}
\ben[(i)]
\item
The groupoids $\ \mathcal{U}(\M)*_l\M\ $, $\ \mathcal{U}(\M)*_r\M\ $,
$\mathcal{U}(\M)* \mathcal{J}(\M)\ $, $\ \mathcal{U}(\M)*\M^h$ and $\
\mathcal{U}(\M)*\M^+\ $ are topological groupoids with respect to the
relative topology inherited from the product uniform topology of $\
\mathcal{U}(\M)\times \M\ $.
\item
The groupoids $\ \mathcal{U}(\M)*_{l_*}\M_*$ , $\ \mathcal{U}(\M)*_{r_*}\M_*\ $,
$\mathcal{U}(\M)* \P(\M_*)\ $, $\ \mathcal{U}(\M)*\M_*^+\ $ and $\ \mathcal{U}(\M)
*\M_*^h$ are topological groupoids with respect to the  relative topology
inherited from the product uniform topology of $\ \mathcal{U}(\M)\times \M_*$.
\item The groupoids listed above cover the groupoid $\mathcal{U}(\M)$.
\een
\end{thm}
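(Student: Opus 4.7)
The plan is to reduce each continuity assertion to uniform continuity of multiplication and $^*$-involution in $\M$ (or in $\M_*$), combined with the fact, established just above, that $\U(\M)$ itself is a topological groupoid in the uniform topology.

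For part (i), I would first handle $\U(\M)*_l\M$ and $\U(\M)*_r\M$. Their structural maps --- the second-coordinate projection (source), left or right multiplication (target), the partial product, and the inverse map --- are compositions of coordinate projections with multiplication and $^*$-involution in $\M$. Since multiplication is jointly uniformly continuous on norm-bounded subsets and $\U(\M)$ lies in the closed unit ball, continuity in the relative product uniform topology is immediate. For the three remaining groupoids the relevant action is the inner action $I$: by Proposition \ref{2.6}, the subsets $\J(\M)$, $\M^h$ and $\M^+$ are invariant under $I$, and each is uniformly closed in $\M$, so $\U(\M)*\J(\M)$, $\U(\M)*\M^h$ and $\U(\M)*\M^+$ arise as closed topological subgroupoids of the inner-action groupoid on $\M$.

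For part (ii), the argument runs in parallel with $\M_*$ in place of $\M$. By Proposition \ref{pred} (formula (\ref{Bb})), the predual actions $L_*$, $R_*$ and $I_*$ are norm-preserving in the $\M_*$-argument, and since $\|u\|=1$ for $u\in\U(\M)$ the resulting action maps are jointly uniformly continuous on $\U(\M)\times\M_*$. Consequently the structural maps of $\U(\M)*_{l_*}\M_*$ and $\U(\M)*_{r_*}\M_*$ are continuous in the product uniform topology, and the inner-action subgroupoids on $\P(\M_*)$, $\M_*^h$ and $\M_*^+$ are closed topological subgroupoids by the invariance part of Proposition \ref{pred}.

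For part (iii) it suffices to produce, for each $u\in\U(\M)$, an element of each listed action groupoid having $u$ as its first coordinate. For the groupoids in (i) this is immediate: the pair $(u,u^*u)$ works, since $u^*u\in\mathcal{L}(\M)\subset\J(\M)\cap\M^+\cap\M^h\cap\M$ satisfies every support condition in the list. For the predual groupoids the same role is played by any normal positive functional whose support projection equals $u^*u$ (respectively $uu^*$). The main obstacle, if any, lies precisely here: exhibiting for every nonzero $p\in\mathcal{L}(\M)$ a functional $\omega\in\M_*^+$ with $s_*(\omega)=p$, which has to be handled via the separating property of $\M_*^+$ on projections of $\M$. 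Parts (i), (ii) and the $\M$-side of (iii) all reduce cleanly to uniform continuity of the algebraic operations.
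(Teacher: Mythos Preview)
Your approach is in the same spirit as the paper's: the paper's ``proof'' is the single sentence before the theorem, pointing to the preceding discussion (that $\U(\M)$ is a uniform topological groupoid and that the relevant actions are continuous) together with the general action-groupoid criterion of Appendix~C. Your direct check of continuity of source, target, product and inverse is a spelled-out version of this.

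There is, however, one structural map you omit which the paper's discussion does address (on the predual side): the identity section $\tilde\varepsilon(r)=(\mu(r),r)$. By the criterion in Appendix~C, $\tilde\varepsilon$ is an embedding only if the moment map $\mu$ is continuous, and here the moment maps are left/right support maps, which are \emph{not} continuous in the norm topology (e.g.\ $\tfrac1n p\to 0$ while $l(\tfrac1n p)=p$). This is why the paper introduces, just before the theorem, the topology $\mathfrak{T}_{\M_*}$ obtained by identifying $\M_*$ with $\{(p,\omega):p=r_*(\omega)\}\subset\mathcal{L}(\M)\times\M_*$ and taking the product topology --- the moment map is then continuous by construction. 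Your list ``source, target, partial product, inverse'' leaves this point unverified; the paper's route is to equip the \emph{base} with the stronger topology $\mathfrak{T}$ rather than to argue norm-continuity of the support.

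On part~(iii): the paper uses ``cover'' in the technical sense of the covering morphism of Appendix~C (the diagram with $\phi_G=pr_1$ and $\phi_B=\mu$), which is automatic from the action-groupoid construction and does \emph{not} assert that $pr_1$ is onto $\U(\M)$. Your stronger reading --- surjectivity of $pr_1$ --- is exactly what runs into the $\sigma$-finiteness obstruction you identify; the paper itself notes, just after Theorem~\ref{propLambda}, that $pr_1:\U(\M)*\M_*^+\to\U(\M)$ is surjective only when $\M$ is $\sigma$-finite. So your ``main obstacle'' is real for your interpretation, but it is not what~(iii) claims.
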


Now we show that the groupoids $\G(\M)$ and $\U(\M)$ have canonically defined structure of complex and real Banach manifold respectively.
Let us begin from definition of complex Banach manifold structure on the lattice $\mathcal{L}(\M)$ of projections of $W^*$-algebra $\M$. For this reason for any $p\in \mathcal{L}(\M)$ by $\Pi_p\subset \mathcal{L}(\M)$ we denote the subset of projections $q\in \mathcal{L}(\M)$ such that
\be\label{complem} q\wedge(1-p)=0\quad {\rm and} \quad q\vee(1-p)=1,\ee
where $"\wedge"$ and $"\vee"$ are joint- and meet-operations  on the projections in the lattice $\mathcal{L}(\M)$. Since for any pair of projections $e,f\in\mathcal{L}(\M)$ one has
$$(e\vee f)-e\sim f-(e\wedge f),$$
see \cite{takesaki}, taking $e=1-p$ and $f=q$ we obtain that $q\sim p$. So we have $\Pi_p\subset\mathcal{O}_p$ and thus $\Pi_{p'}\cap\Pi_p=\emptyset$ if $p'\not\sim p$. The inverse statement, i.e. that $p'\sim p$ implies $\Pi_{p'}\cap\Pi_p\not=\emptyset$ is not true in general case. For example for infinite $W^*$-algebra we can take $p\not=1$ such that $p\sim 1$. Then $\Pi_1=\{1\}$ and thus $\Pi_1\cap\Pi_p=\emptyset$.\\
Condition (\ref{complem}) is equivalent to existence of Banach splitting
\be\label{dec2}  \M=q\M\oplus(1-p)\M\ee
of $\M$ on the right $W^*$-ideals. Using  (\ref{dec2}) we decompose
\be\label{dekompp} p=x-y\ee
 the projection $p$ on two elements $x\in q\M p$ and $y\in (1-p)\M p$. In such a way we define the map  $\varphi_p:\Pi_p\ \tilde{\to}\ (1-p)\M p$  by
 \be \varphi_p(q):=y.\ee
Let us show that $\varphi_p$ is a bijection of $\Pi_p$ on the Banach space $(1-p)\M p$. To this end for any $y\in (1-p)\M p$ we define $x$ by equality (\ref{dekompp}) and note that
 \be\label{propq} p=px,\quad xp=x\quad {\rm and}\quad x^2=x.\ee
 Thus the left multiplication maps $L_p$ and $L_x$ on $\M$ satisfy
 \be\label{L} L_p=L_p\circ L_x,\quad L_x=L_x\circ L_p\quad {\rm and}\quad L_x\circ L_x=L_x\ee
 and
 \be\label{kerLx} Ker\ L_x=(1-x)\M=(1-p)\M= Ker\ L_p.\ee
 From (\ref{kerLx}) we have
  \be\label{dec1} \M=x\M\oplus(1-p)\M,\ee
  where $x\M$ is right ideal of $W^*$-algebra generated by $x\in \M$. Let us also note that
  \be\label{LL} L_x:p\M\ \tilde{\to}\  x\M \quad {\rm and}\quad L_p:x\M\ \tilde{\to}\ p\M\ee
  are mutually inverse isomorphisms of the corresponding right $W^*$-ideals.

 The left support $l(x)$ of $x\in\M$ is the identity in $W^*$-subalgebra $x\M\cap(x\M)^*$. Thus $l(x)\in x\M$. This shows that $x\M=l(x)\M$ and
  \be\label{decomp3} \M=l(x)\M\oplus(1-p)\M,\ee
  i.e. $l(x)\in \Pi_p$. In such a way we  prove that $\varphi_p$ has the inverse defined by
  \be\label{phiinverse} \varphi^{-1}_p(y):=l(p+y).\ee
  \begin{prop}
  If $x\in q\M p$ is defined by the decomposition (\ref{dekompp}) then $x\in\G(\M)$ and
$\Ss(x)=p$ and $ \Tt(x)=q$. So one has section
$\sigma_p: \Pi_p\to \Tt^{-1}(\Pi_p)\subset \G(\M)$ defined by
\be\label{sigma} \sigma_p(q):=x.\ee
  \end{prop}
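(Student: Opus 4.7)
The plan is to verify the three assertions in order: (a) $x\in\G(\M)$, (b) $\Ss(x)=p$ and $\Tt(x)=q$, (c) $\sigma_p$ is a section of $\Tt:\G(\M)\to\mathcal{L}(\M)$ over $\Pi_p$.

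First I would extract the algebraic relations directly from the decomposition $p=x-y$ with $x\in q\M p$ and $y\in(1-p)\M p$. Writing $x=p+y$, the conditions $py=0$, $yp=y$ (and their adjoint versions $y^{*}p=0$, $py^{*}=y^{*}$) give immediately
\be\label{planxstarx}
x^{*}x=(p+y^{*})(p+y)=p+y^{*}y\in p\M p,
\ee
and $x^{*}x\geq p$ as elements of the $W^{*}$-subalgebra $p\M p$. Since $p$ is the unit of $p\M p$, the inequality $|x|^{2}=x^{*}x\geq p$ shows $|x|^{2}$, and hence $|x|$, is invertible in $p\M p$. Thus $|x|\in G(p\M p)$ with $p=s(|x|)$, which by definition means $x\in\G(\M)$ and gives simultaneously $\Ss(x)=r(x)=s(|x|)=p$.

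Next I would verify $r(x)=p$ independently in order to pin down the source map (one can also just read it off from the identity $x\, p=x$ and the above). The relation $xp=x$ gives $r(x)\leq p$. Conversely, if $f$ is a projection with $xf=0$, then $pf=-yf$; since $pf\in p\M$ and $yf\in(1-p)\M$, the Banach splitting $\M=p\M\oplus(1-p)\M$ forces $pf=0$, so $f\leq 1-p$. Hence $1-r(x)\leq 1-p$, i.e.\ $r(x)\geq p$, giving equality. For $\Tt(x)=l(x)=q$, I would invoke the bijectivity of $\varphi_{p}:\Pi_{p}\to(1-p)\M p$ already established: its inverse is $y\mapsto l(p+y)=l(x)$, so by construction $l(x)=\varphi_{p}^{-1}(\varphi_{p}(q))=q$. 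Alternatively, $qx=x$ yields $l(x)\leq q$, and then $l(x)\in\Pi_{p}$ (from $x\M=l(x)\M$ together with (\ref{dec1})) combined with $l(x)\leq q$ and $l(x)\sim q\sim p$ forces equality in the partial order of $\Pi_{p}$.

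Finally, the section property of $\sigma_{p}$ is automatic: well-definedness is exactly (\ref{dekompp}), which relies only on the Banach splitting $\M=q\M\oplus(1-p)\M$ guaranteed by $q\in\Pi_{p}$; the relation $\Tt\circ\sigma_{p}=\mathrm{id}_{\Pi_{p}}$ is the statement $\Tt(x)=q$ proved above, so $\sigma_{p}(\Pi_{p})\subset\Tt^{-1}(\Pi_{p})$. The main conceptual step is the identity (\ref{planxstarx}) together with the estimate $x^{*}x\geq p$ in $p\M p$, because it is what simultaneously places $x$ in $\G(\M)$ and identifies its source; everything else is a direct consequence of the already-established bijectivity of $\varphi_{p}$ and of the direct-sum decomposition (\ref{dec2}).
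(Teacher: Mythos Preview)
Your argument is correct and actually more transparent than the paper's. The paper's one--line proof invokes the module isomorphism $L_x:p\M\ \tilde{\to}\ x\M$ from (\ref{LL}) together with $x\M=l(x)\M=q\M$; from that one extracts $r(x)=p$, $l(x)=q$, and then $L_{|x|}=L_{u^*}\circ L_x:p\M\to p\M$ is an isomorphism via the polar decomposition. You instead compute $x^*x=p+y^*y\geq p$ in $p\M p$ directly, which gives invertibility of $|x|$ in $p\M p$ and $s(|x|)=p$ in one stroke, without passing through the ideal isomorphisms established earlier. This is a genuinely more elementary route and has the advantage of being self-contained.

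One small caveat: your \emph{alternative} argument for $\Tt(x)=q$ is not quite right as stated. From $l(x)\leq q$ and $l(x)\sim q$ you cannot conclude $l(x)=q$ in a general $W^*$-algebra (this fails for properly infinite $\M$). The correct salvage is the one implicit in the paper: since both $l(x)$ and $q$ lie in $\Pi_p$, both $l(x)\M$ and $q\M$ are direct complements of $(1-p)\M$ in $\M$; together with $l(x)\M\subset q\M$ this forces $l(x)\M=q\M$, hence $l(x)=q$. Your primary argument via $\varphi_p^{-1}(y)=l(p+y)$ is of course already correct, so this does not affect the proof.
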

  \prf{The above follows from (\ref{LL}) and from $x\M=l(x)\M=q\M$.  }

  We conclude from the above that maps $\varphi_p: \Pi_p\to (1-p)\M p$, where $p\in\mathcal{L}(\M)$, define a canonical atlas on $\mathcal{L}(\M)$. Recall that $\bigcup_{p\in\mathcal{L}(\M)}\Pi_p=\mathcal{L}(\M)$. This atlas is modeled by the family of Banach spaces $(1-p)\M p$. If projections $q$ and $p$ are equivalent, i.e. if there exists $x\in \G(\M)$ such that $p=\Ss(x)$ and $p'=\Tt(x)$, then the Banach spaces $(1-p)\M p$ and $(1-p')\M p'$ are isomorphic.

  Now we find the explicite formulae for the transitions maps
  \be\label{trans} \varphi_p\circ\varphi^{-1}_{p'}: \varphi_{p'}(\Pi_p\cap\Pi_{p'})\to \varphi_{p}(\Pi_p\cap\Pi_{p'})\ee
  in the case when $\Pi_p\cap\Pi_{p'}\not= \emptyset$. For this reason let us take for $q\in \Pi_p\cap\Pi_{p'}$ the following splittings
  \be\label{trans1}\begin{array}{l} \M=q\M\oplus(1-p)\M=p\M\oplus(1-p)\M \\
  \M=q\M\oplus(1-p')\M=p'\M\oplus(1-p')\M.\end{array}\ee
 The splittings (\ref{trans1}) lead to  the corresponding decompositions of $p$ and $p'$
  \be\label{trans2}\begin{array}{lll} p=x-y& \qquad & p=a+b\\
  p'=x'-y'&\qquad & 1-p=c+d\end{array}\ee
  where $x\in q\M p$, $\ y\in (1-p)\M p$, $\ x'\in q\M p'$, $\ y'\in (1-p')\M p'$, $\ a\in p'\M p$, $\ b\in (1-p')\M p$, $\ c\in p'\M (1-p)\ $ and  $\ d\in (1-p')\M (1-p)$. Combining equations from (\ref{trans2}) we obtain
  \be\label{trans3} q=\iota(x')+y' \iota(x')\ee
  \be\label{trans4} q=(a+c y)\iota(x)+(b+d y) \iota(x).\ee
  Comparing (\ref{trans3}) and (\ref{trans4}) we find that
  \be\label{trans5} \iota(x')=(a+c y)\iota(x)\ee
  \be\label{trans6} y' \iota(x')=(b+dy)\iota(x).\ee
  After substitution (\ref{trans5}) into (\ref{trans6}) and noting that $\Tt(a+cy)\leqslant p'$ we get
  \be\label{trans7} y'=(\varphi_{p'}\circ\varphi^{-1}_{p})(y)=(b+dy)\iota(a+cy).\ee
  All operations involved in the right-hand-side of equality (\ref{trans7}) are smooth. Thus we conclude that the atlas $\left(\Pi_p,\ \varphi_p\right)$, $p\in \mathcal{L}(\M)$ defines on $\mathcal{L}(\M)$ the structure of a  complex Banach manifold of type $\mathfrak{G}$, see \cite{bou}, where $\mathfrak{G}$ is the set of Banach spaces $(1-{p})\M{p}$ indexed by elements $p\in \mathcal{L}(\M)$. See also \cite{BG} for the investigation of the infinite-dimensional  Grassmannian  as a homogeneous spaces of the Banach-Lie group $U(\M)$. Note, that when $\M$ is finite $W^*$-algebra then the orbits of the inner action of the groupoid $\U(\M)$  and the orbits of the inner action of unitary group $U(\M)$ on the lattice $\mathcal{L}(\M)$ coincide.

  \bigskip

  Now let us introduce a structure of the Banach smooth manifold on the groupoid $\G(\M)$.

 For this reason taking $p,\tilde{p}\in \mathcal{L}(\M)$ we define the covering of $\G(\M)$ by subsets:
 \be\label{OMega} \Omega_{\tilde{p}p}:=\Tt^{-1}(\Pi_{\tilde{p}})\cap\Ss^{-1}(\Pi_p).\ee
 Let us note here that $\Omega_{\tilde{p}p}\not=\emptyset$ if and only if $\tilde{p}\sim p$. Note also that the set $\Omega_{pp}$ is a subgroupoid of $\G(\M)$. If $\Omega_{\tilde{p}p}\not=\emptyset$ then one has the one-to-one map
 \be\psi_{\tilde{p}p}:\Omega_{\tilde{p}p}\to (1-\tilde{p})\M\tilde{p}\oplus\tilde{p}\M p\oplus(1-p)\M p\ee
 of $\Omega_{\tilde{p}p}$ on an open subset of the direct sum of the Banach subspaces of the $W^*$-algebra $\M$. This map we define by
 \be\label{psipp}\psi_{\tilde{p}p}(x):=\left(\varphi_{\tilde{p}}(\Tt(x)),\iota(\sigma_{\tilde{p}}(\Tt(x)))x \sigma_{{p}}(\Ss(x)),\varphi_p(\Ss(x)) \right),\ee
 where $\sigma_p(q)\in q\M p$ and $\varphi_p(q)\in (1-p)\M p$ are obtained from the decomposition
\be\label{dec3}p=\sigma_p(q)+\varphi_p(q)\ee
 of $p$ with respect to (\ref{dec2}). Recall that $\sigma_p:\pi_p\to\Tt^{-1}(\Pi_p)$ is a section defined in (\ref{sigma}).\\
 The map $\psi_{\tilde{p}p}^{-1}:\psi_{\tilde{p}p}(\Omega_{\tilde{p}p})\to \Omega_{\tilde{p}p}$ inverse to (\ref{psipp}) looks as follows
 \be\label{psiinv} \psi_{\tilde{p}p}^{-1}(\tilde{y},z,y):=\sigma_{\tilde{p}}(\tilde{q})z\iota(\sigma_p(q))=(\tilde{p}+\tilde{y})z\iota(p+y)\ee
 where $\tilde{q}=l(\tilde{p}+\tilde{y})$ and $q=l(p+y)$ are left supports of $\tilde{p}+\tilde{y}$ and $p+y$ respectively.
 The transition  maps
 $$\psi_{\tilde{p'}p'}\circ\psi_{\tilde{p}p}^{-1}:\psi_{\tilde{p}p}(\Omega_{\tilde{p'}p'}\cap\Omega_{\tilde{p}p})\to
 \psi_{\tilde{p'}p'}(\Omega_{\tilde{p'}p'}\cap\Omega_{\tilde{p}p})$$
 for $(\tilde{y},z,y)\in \psi_{\tilde{p}p}(\Omega_{\tilde{p'}p'}\cap\Omega_{\tilde{p}p})$ are given by
 \be
 (\psi_{\tilde{p'}p'}\circ\psi_{\tilde{p}p}^{-1})(\tilde{y},z,y):=(\tilde{y'},z',y'),
 \ee
 where
 \be\label{tR1}
 \tilde{y'}=(\varphi_{\tilde{p'}}\circ\varphi^{-1}_{\tilde{p}})(\tilde{y})=(\tilde{b}+\tilde{d}\tilde{y})\iota(\tilde{a}+\tilde{c}\tilde{y}) \ee
 \be\label{tR2} y'=(\varphi_{p'}\circ\varphi^{-1}_{p})(y)=(b+dy)\iota(a+cy)\ee
 and
 \be\label{tR3} z'=\iota(\tilde{p'}+\tilde{y'})(\tilde{p}+\tilde{y})z\iota(p+y)(p'+y').\ee
 We note that all maps in (\ref{tR1}), (\ref{tR2}), (\ref{tR3}) are smooth. Thus we conclude that  \be\label{atlas}\left(\Omega_{\tilde{p}p},\psi_{\tilde{p}p}:\Omega_{\tilde{p}p}\to (1-\tilde{p})\M\tilde{p}\oplus\tilde{p}\M p\oplus(1-p)\M p\right),\ee
 where $(p,\tilde{p})\in \mathcal{L}(\M)\times \mathcal{L}(\M)$ are pairs of equivalent projections,
 form smooth atlas on the groupoid $\G(\M)$ in sense of \cite{bou}. The smooth (analytic) Banach manifold structure of $\G(\M)$ has type $\mathfrak{G}$, where $\mathfrak{G}$ is the set of Banach spaces $(1-\tilde{p})\M\tilde{p}\oplus\tilde{p}\M p\oplus(1-p)\M p$ indexed by the pair of equivalent elements of $\mathcal{L}(\M)$.

 \begin{thm}\label{BLgroupoidthm}
 The groupoid $\G(\M)$ is a Banach-Lie groupoid on base $\mathcal{L}(\M)$ with respect to the smooth (analytic) Banach manifold structure of type  $\mathfrak{G}$ defined by the atlas (\ref{atlas}).
 \end{thm}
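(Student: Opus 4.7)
The strategy is to verify directly that the source, target, inversion and multiplication maps become smooth in the atlas (\ref{atlas}), exploiting that the local coordinate formulas admit very explicit and manageable expressions in the algebra operations of $\M$. Because $\Ss$ and $\Tt$ will turn out to be split surjective submersions in coordinates, the fibred product $\G(\M)^{(2)}$ is automatically an embedded Banach submanifold of $\G(\M)\times\G(\M)$, so what remains is to check smoothness of $\Ss$, $\Tt$, of the identity section $\varepsilon$, of the inversion $\iota$, and of the multiplication on this submanifold.

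First I would read off $\Ss$ and $\Tt$ in charts. From $\psi_{\tilde{p}p}^{-1}(\tilde{y},z,y)=(\tilde{p}+\tilde{y})z\iota(p+y)$ one computes $\Tt(\psi_{\tilde{p}p}^{-1}(\tilde{y},z,y))=l(\tilde{p}+\tilde{y})$ and $\Ss(\psi_{\tilde{p}p}^{-1}(\tilde{y},z,y))=l(p+y)$; composing with the charts $\varphi_{\tilde{p}}$ and $\varphi_{p}$ of $\mathcal{L}(\M)$, these become the linear projections $(\tilde{y},z,y)\mapsto\tilde{y}$ and $(\tilde{y},z,y)\mapsto y$ onto direct summands of $(1-\tilde{p})\M\tilde{p}\oplus\tilde{p}\M p\oplus(1-p)\M p$. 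Hence $\Ss$ and $\Tt$ are smooth split surjective submersions. A short calculation, using $q\,\sigma_p(q)=\sigma_p(q)$ and $\iota(\sigma_p(q))\sigma_p(q)=p$, shows that in $\psi_{pp}$ the identity section reads $y\mapsto(y,p,y)$, an affine smooth embedding.

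For inversion, combining the antimultiplicativity $\iota(abc)=\iota(c)\iota(b)\iota(a)$ with the groupoid identities for the section $\sigma$ yields $\psi_{p\tilde{p}}\circ\iota\circ\psi_{\tilde{p}p}^{-1}(\tilde{y},z,y)=(y,\iota(z),\tilde{y})$. Here $z$ ranges over those elements of $\tilde{p}\M p$ lying in $\G(\M)_p^{\tilde{p}}$, and the polar decomposition $z=u|z|$ gives $\iota(z)=|z|^{-1}u^*=(z^*z)^{-1}z^*$ with $(z^*z)^{-1}$ taken in the unital Banach algebra $p\M p$. Since inversion in a unital Banach algebra is analytic on its open domain, $z\mapsto\iota(z)$ is smooth on its domain, and hence so is $\iota$ in coordinates.

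For multiplication, take $x_1\in\Omega_{\tilde{p}p'}$ and $x_2\in\Omega_{p'p''}$ with coordinates $(\tilde{y}_1,z_1,\hat y)$ and $(\hat y,z_2,y_2)$; the matching condition $\Ss(x_1)=\Tt(x_2)$ forces the common value $\hat y$ in the chart of $\Pi_{p'}$. Using $\iota(p'+\hat y)(p'+\hat y)=p'$ together with $z_1p'=z_1$ and $p'z_2=z_2$, the product collapses to $x_1x_2=(\tilde{p}+\tilde{y}_1)(z_1z_2)\iota(p''+y_2)$, so in coordinates the multiplication is the bilinear map $(\tilde{y}_1,z_1,\hat y,z_2,y_2)\mapsto(\tilde{y}_1,z_1z_2,y_2)$, visibly smooth. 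The only genuinely analytic input I expect to require care is the identification of the domain of $\iota$: the subset $\G(\M)_p^{\tilde{p}}\subset\tilde{p}\M p$ of groupoid-invertible elements coincides with the open set on which $z^*z$ is invertible in $p\M p$. Everything else reduces to polynomial operations in $\M$ and to compatibility across charts with different base points $(\tilde{p},p)$, which is already encoded by the smoothness of the transition formulas (\ref{tR1})--(\ref{tR3}) defining the atlas.
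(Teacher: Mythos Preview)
Your proposal is correct and follows essentially the same approach as the paper: both verify smoothness of $\Ss$, $\Tt$, $\varepsilon$, $\iota$, and the product by writing them explicitly in the charts $(\Omega_{\tilde p p},\psi_{\tilde p p})$, obtaining the projections $(\tilde y,z,y)\mapsto\tilde y$, $(\tilde y,z,y)\mapsto y$ and the formula $(\tilde y,z,y)\mapsto(y,\iota(z),\tilde y)$ for the inversion. Your only deviation is a convenient choice of charts---taking $\tilde p=p$ for the identity section and $p_1=\tilde p_2$ for the product---which collapses the paper's general expressions $(\varphi_{\tilde p}\circ\varphi_p^{-1}(y),\,\iota(\sigma_{\tilde p}(\varphi_p^{-1}(y)))\sigma_p(\varphi_p^{-1}(y)),\,y)$ and $(\tilde y_1,\,z_1\iota(\sigma_{p_1}(\cdot))\sigma_{\tilde p_2}(\cdot)z_2,\,y_2)$ to the simpler affine/bilinear maps $y\mapsto(y,p,y)$ and $(\tilde y_1,z_1,\hat y,z_2,y_2)\mapsto(\tilde y_1,z_1z_2,y_2)$; this is a harmless simplification since smoothness is local and chart-independent.
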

 \prf{ We show that all groupois maps and the groupoid product are smooth (analytic) with respect to the considered Banach manifold structure.
 \ben[(i)]
 \item For source and target map we have
 \be\label{atlas1} (\varphi_p\circ\Ss\circ \psi_{\tilde{p}p}^{-1})(\tilde{y},z,y)=y,\ee
 \be\label{atlas2} (\varphi_{\tilde{p}}\circ\Tt\circ \psi_{\tilde{p}p}^{-1})(\tilde{y},z,y)=\tilde{y}.\ee
 We assumed in (\ref{atlas1}) and (\ref{atlas2}) that  $(\tilde{y},z,y)\in \psi_{\tilde{p}p}(\Omega_{\tilde{p}p})$, $\ \Ss(\psi_{\tilde{p}p}^{-1}(\tilde{y},z,y))\in \Pi_p$ and $\Tt(\psi_{\tilde{p}p}^{-1}(\tilde{y},z,y))\in \Pi_{\tilde{p}}$ respectively. We conclude from (\ref{atlas1}) and (\ref{atlas2}) that $\varphi_p\circ\Ss\circ \psi_{\tilde{p}p}^{-1}$ and $\varphi_{\tilde{p}}\circ\Tt\circ \psi_{\tilde{p}p}^{-1}$ are smooth (analytic) submersions.
 \item For identity section $\varepsilon:\mathcal{L}(\M)\to\G(\M)$ we have
 \be\label{atlas3}(\psi_{\tilde{p}p}\circ\varepsilon\circ\varphi_p^{-1})(y)=(\varphi_{\tilde{p}}\circ\varphi_p^{-1}(y),
 \iota(\sigma_{\tilde{p}}(\varphi^{-1}_p(y)))\sigma_{p}(\varphi^{-1}_p(y)),y),\ee
 where $y\in \varphi_p(\Pi_p)$. Since $\sigma_{\tilde{p}}:\Pi_{\tilde{p}}\to\Tt^{-1}(\Pi_{\tilde{p}})$ and $\sigma_{p}:\Pi_{p}\to\Tt^{-1}(\Pi_{p})$ are  smooth (analytic) sections we obtain that $\psi_{\tilde{p}p}\circ\varepsilon\circ\varphi_p^{-1}$ is smooth (analytic) map too.
 \item The inverse map $\iota:\G(\M)\to\G(\M)$ takes  $\Omega_{\tilde{p}p}$ onto $\Omega_{p\tilde{p}}$ and we have
 \be\label{atlas4}(\psi_{p\tilde{p}}\circ\iota\circ\psi_{\tilde{p}p}^{-1})(\tilde{y},z,y)=({y},\iota({z}),\tilde{y}).\ee
 Thus $\iota$ is a complex smooth (analytic) map.
 \item Let us take $x_1\in \Omega_{\tilde{p}_1p_1}$ and $x_2\in \Omega_{\tilde{p}_2p_2}$ such that $\Ss(x_1)=\Tt(x_2)\in \Pi_{\tilde{p}_2}\cap\Pi_{p_1}$. Assuming $\psi_{\tilde{p}_1p_1}(x_1)=(\tilde{y_1},z_1,y_1)$ and $\psi_{\tilde{p}_2p_2}(x_2)=(\tilde{y_2},z_2,y_2)$ we obtain that
     \be\label{atlas5}\psi_{\tilde{p}_1p_2}(\psi_{\tilde{p}_1p_1}^{-1}(\tilde{y_1},z_1,y_1)\psi_{\tilde{p}_2p_2}^{-1}
     (\tilde{y_2},z_2,y_2))=\ \ \ \ \ \ \ \ \ \ \ \ \ \ \ \ \ \ \ \ \ee
     $\ \ \ \ \ \ \ \ \ \ \ \ =(\tilde{y_1},z_1\iota(\sigma_{p_1}(\varphi^{-1}_{p_1}(y_1)))(\sigma_{\tilde{p}_2}(\varphi^{-1}_{\tilde{p}_2}(\tilde{y}_2)))z_2,y_2).$\\

 \een
 Summing up we conclude that $\G(\M)$ is a Banach-Lie groupoid. }

 In order to investigate the structure of real Banach manifold on $\U(\M)$ we recall that one can define $\U(\M)$ as the set of the fixed points of the groupoid automorphism $J:\G(\M)\to\G(\M)$, see (\ref{J}). Next expressing $J:\Omega_{\tilde p p}\to\Omega_{\tilde p p}$ in the coordinates
 $$\Omega_{\tilde{p}p}\ \ni\  x\longmapsto\psi_{\tilde{p}p}(x)=\left(\tilde y,z,y\right)\ \in\  (1-\tilde{p})\M\tilde{p}\oplus\tilde{p}\M p\oplus(1-p)\M p$$ we find that
 \be\label{mapa} \left(\psi_{\tilde{p}p}\circ J\circ \psi_{\tilde{p}p}^{-1}\right)\left(\tilde y,z,y\right)=\qquad\qquad\qquad\qquad\qquad\qquad\ee $$\qquad\qquad\qquad=\left(\tilde y,\iota(\sigma_{\tilde p}(\varphi^{-1}_{\tilde p}(\tilde y))^*\sigma_{\tilde p}(\varphi^{-1}_{\tilde p}(\tilde y)))\iota(z^*)\sigma_{ p}(\varphi^{-1}_{ p}( y))^*\sigma_{ p}(\varphi^{-1}_{ p}( y)),y\right),$$
 where $z\in \G(\M)_p^{\tilde p}\subset \tilde p\M p$. Note that  $\G(\M)_p^{\tilde p}$ is an open subset of the Banach subspaces $\tilde p\M p$. Since $J^2(x)=x$ for $x\in\U(\M)$ one has \be\label{DJ} (DJ(x))^2=\mathbf{1}\ee for $DJ(x):T_x\G(\M)\to T_x\G(\M)$.
  Thus one obtains a spliting of the tangent space \be T_x\G(\M)=T^+_x\G(\M)\oplus T^-_x\G(\M)\ee defined by the Banach space projections
 \be P_{\underline +}(x):=\frac{1}{2}\left(\mathbf{1}\underline + DJ(x)\right).\ee
 The Frech\'{e}t derivative $D\iota(z)$ of the inversion map
$$ \iota: \G(\M)_p^{\tilde p}\ni \ z\mapsto\ \iota(z)\in \G(\M)^p_{\tilde p}$$ at the point $z$ is given by \be D\iota(z)\vartriangle\! z=-\iota(z)\vartriangle\! z\ \iota(z), \ee
where $\vartriangle\! z\in \tilde p\M p$. Thus for $\vartriangle\!\tilde y\in (1-\tilde p)\M p$, $\vartriangle\! z\in \tilde p\M p$, $\vartriangle\! y\in (1-p)\M p$ we obtain
\be\label{DJ} D\left(\psi_{\tilde{p}p}\circ J\circ \psi_{\tilde{p}p}^{-1}\right)\left(\tilde y,z,y\right)(\vartriangle\!\tilde y,\vartriangle\! z,\vartriangle\! y)=\ee $$=\left(\vartriangle\!\tilde y,\ \ -\tilde g^{-2}(\tilde y)D\tilde g^2(\tilde y)\vartriangle\!\tilde y\tilde g^{-2}(\tilde y)\iota(z^*)g^2(y)-\tilde g^{-2}(\tilde y)\iota(z^*)\vartriangle\! z^*\iota(z^*)g^2(y)+\right.$$  $$\left. +\tilde g^{-2}(\tilde y)\iota(z^*)Dg^2(y)\vartriangle\! y,\ \ \vartriangle\! y\right),$$
where $$\tilde g^(\tilde y):=[\sigma_{\tilde p}(\varphi^{-1}_{\tilde p}(\tilde y))^*\sigma_{\tilde p}(\varphi^{-1}_{\tilde p}(\tilde y))]^{\frac{1}{2}}\quad
{\rm and}\quad g(y):=[\sigma_{ p}(\varphi^{-1}_{ p}( y))^*\sigma_{ p}(\varphi^{-1}_{ p}( y))]^{\frac{1}{2}}$$
are elements of $G(\tilde p\M \tilde p)$ and $G(p\M p)$ respectively.\\
If $x\in \Omega_{\tilde{p}p}\cap \U(\M)$ then from (\ref{mapa}) one has \be\label{Jz}\tilde g^2(\tilde y) z g^{-2}(y)=\iota(z^*).\ee It follows from (\ref{DJ}) that $\vartriangle\! z\in T_x^+\G(\M)$ if and only if it satisfies \be\label{deltaz} \vartriangle\! z=-\tilde g^{-2}(\tilde y)D\tilde g^2(\tilde y)\vartriangle\!\tilde y\tilde g^{-2}(\tilde y)\iota(z^*)g^2(y)+\ee $$-\tilde g^{-2}(\tilde y)\iota(z^*)\vartriangle\! z^*\iota(z^*)g^2(y)+ \tilde g^{-2}(\tilde y)\iota(z^*)Dg^2(y)\vartriangle\! y. $$ In order to simplify the equation (\ref{deltaz})  we introduce a new coordinate
\be\label{gug} u:=\tilde g(\tilde y)zg^{-1}(y)\ee for which the condition (\ref{Jz}) reduces to $u=\iota(u^*)$, i.e. \be u\in \tilde p\M p\cap\U(\M).\ee Substituting $z=\tilde g^{-1}(\tilde y)u g(y)$ and
\be \vartriangle\! z=-\tilde g^{-1}(\tilde y)D\tilde g(\tilde y)\tilde g^{-1}(\tilde y)\vartriangle\!\tilde y u g(y)+\ee $$+\tilde g^{-1}(\tilde y)\vartriangle\! u g(y)+ \tilde g^{-1}(\tilde y)u Dg(y)\vartriangle\! y$$
and \be \vartriangle\! z^*=Dg(y)\vartriangle\! y u^*\tilde g^{-1}(\tilde y)+g(y)\vartriangle\! u^*g^{-1}+\ee $$-g(y)u^*\tilde g^{-1}(\tilde y)D \tilde g(\tilde y)\vartriangle\! \tilde y g^{-1}(y)$$ into the equation (\ref{deltaz}) we obtain the equivalent equation \be u^*\vartriangle\! u+(u^*\vartriangle\! u)^*=0.\ee
 Thus for $x\in \Omega_{\tilde{p}p}\cap \U(\M)$ we have the isomorphisms of the real Banach spaces \be\label{T+} T^+_x\G(\M)\cong  (1-\tilde{p})\M\tilde{p}\oplus \mathfrak{H}_p \oplus(1-p)\M p\ee   where the real Banach space \be \mathfrak{H}_p=\{u^*\vartriangle\! u\in p\M p:\ \ u^*\vartriangle\! u+(u^*\vartriangle\! u)^*=0\}\ee  is  isomorphic with the space $ip\M^h p$ of the anti-hermitian elements of the subalgebra $p\M p$. Note here that fixing  $u_0\in\U(\M)_p^{\tilde p}\subset\G(\M)_p^{\tilde p}$ we obtain the bijection \be\U(\M)_p^{\tilde p} \ni u\ \tilde\longmapsto\  u_0^*u\in U(p\M p).\ee The above allows us to define the one-to-one maps \be \varphi_{\tilde p p}:\U(\M)\cap\Omega_{\tilde p p}\to(1-\tilde{p})\M\tilde{p}\oplus ip\M^h p\oplus(1-p)\M p\ee of $\U(\M)\cap\Omega_{\tilde p p}$ on the open subset in the real Banach space
$(1-\tilde{p})\M\tilde{p}\oplus ip\M^h p\oplus(1-p)\M p$. Summing up the above facts we can formulate the following statement
\begin{thm}\label{real}
\ben[(i)]
\item The groupoid $\U(\M)$ of partial isometries has a natural structure of the real Banach manifold of the type $\mathfrak{G}$, where the family $\mathfrak{G}$ consist of the real Banach spaces $$(1-\tilde{p})\M\tilde{p}\oplus\tilde{p}\M^h p\oplus(1-p)\M p$$ parameterized  by the pairs $(\tilde p, p)\in \mathcal{L}(\M)\times\mathcal{L}(\M)$ of equivalent projections.
    \item The groupoid $\U(\M)$ is a closed real Banach submanifold of $\G(\M)$ with the real Banach manifold structure underlaying its complex Banach manifold structure.\een\end{thm}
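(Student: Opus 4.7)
The plan is to leverage the identification $\U(\M)=\{x\in\G(\M):J(x)=x\}$ as the fixed-point set of the anti-holomorphic groupoid automorphism $J$ and exploit the computations carried out just before the statement. Since $J^2=\id$, the derivative $DJ(x)$ is an involution of $T_x\G(\M)$ at every fixed point $x\in\U(\M)$, and the splitting $T_x\G(\M)=T_x^+\G(\M)\oplus T_x^-\G(\M)$ by the projections $P_\pm(x)=\tfrac12(\mathbf 1\pm DJ(x))$ is bounded. The preceding computation has already identified $T_x^+\G(\M)$ explicitly with the real Banach space $(1-\tilde p)\M\tilde p\oplus ip\M^h p\oplus(1-p)\M p$ in the coordinate $u=\tilde g(\tilde y)zg^{-1}(y)$, so part (i) will follow once we assemble these local pieces into a compatible atlas.

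First I would fix the covering $\U(\M)\cap\Omega_{\tilde p p}$ by the open sets inherited from the complex atlas (\ref{atlas}) of $\G(\M)$, and take as chart map $\varphi_{\tilde p p}$ the composition of $\psi_{\tilde p p}$ with the real-analytic change of variables $(\tilde y,z,y)\mapsto(\tilde y,u,y)$, $u:=\tilde g(\tilde y)zg^{-1}(y)$. The equations $u=\iota(u^*)$ and $u\in \tilde p\M p\cap\U(\M)$ derived above show that, on $\U(\M)\cap\Omega_{\tilde p p}$, this chart takes values in the prescribed real Banach space $(1-\tilde p)\M\tilde p\oplus ip\M^h p\oplus(1-p)\M p$ (using the identification of $\mathfrak{H}_p$ with $ip\M^h p$ via $u\mapsto u^*\vartriangle\! u$ translated around a reference $u_0\in \U(\M)_p^{\tilde p}$). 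Because the maps $\sigma_p$, $\varphi_p$, the spectral square root $g(y)$ and the algebra operations involved are all real-analytic and $J$ is a real-analytic involution on $\G(\M)$, the transition maps between two such charts are obtained by restricting the already-smooth transitions (\ref{tR1})--(\ref{tR3}) to the fixed-point set and applying a smooth change of the middle coordinate; hence they are real-analytic. This supplies the atlas of type $\mathfrak G$ claimed in (i).

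For (ii), the key observation is that for every $x\in\U(\M)$ the projection $P_+(x):T_x\G(\M)\to T_x^+\G(\M)$ is a bounded projector with closed image and kernel, so $T_x^+\G(\M)$ splits $T_x\G(\M)$ in the Banach sense. Combined with the local model above, this exhibits $\U(\M)\cap\Omega_{\tilde p p}$ as the zero set of the smooth submersion $P_-(x)\circ\psi_{\tilde p p}$ near each fixed point, giving a split Banach submanifold chart in the sense of \cite{bou}. Closedness follows at once from the continuity of $J$ with respect to the manifold topology of $\G(\M)$. Finally, the groupoid maps $\Ss,\Tt,\iota,\varepsilon$ and the partial multiplication on $\U(\M)$ are the restrictions of the corresponding smooth maps of $\G(\M)$, whose smoothness was established in Theorem \ref{BLgroupoidthm}; since they preserve the fixed-point set of $J$, their restrictions are automatically smooth in the real-analytic structure, so $\U(\M)$ is a real Banach-Lie subgroupoid of $\G(\M)$.

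The main obstacle I foresee is bookkeeping the change of coordinates $u=\tilde g(\tilde y)zg^{-1}(y)$ well enough to identify $T_x^+\G(\M)$ with $ip\M^h p$ \emph{globally on a chart} (not merely at a single point): one must check that varying $(\tilde y,y)$ together with $u\in\U(p\M p)$ in a neighbourhood of $u_0$ parametrises $\U(\M)\cap\Omega_{\tilde p p}$ bijectively and that the inverse map is smooth. Once the invertibility of $g(y)$ in $G(p\M p)$ is exploited (it is guaranteed because $g(y)^2=\sigma_p(\varphi_p^{-1}(y))^*\sigma_p(\varphi_p^{-1}(y))\in G(p\M p)$) the chart inverse is given by an explicit algebraic formula and its smoothness is routine; the remaining verifications reduce to direct substitution into the transition formulas (\ref{tR1})--(\ref{tR3}).
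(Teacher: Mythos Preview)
Your proposal is correct and follows essentially the same route as the paper: the argument for Theorem~\ref{real} is in fact the discussion immediately preceding the statement, and you have organized exactly those ingredients---the fixed-point description $\U(\M)=\{J=\id\}$, the involutive derivative $DJ(x)$ and the resulting splitting $T_x\G(\M)=T_x^+\G(\M)\oplus T_x^-\G(\M)$, the coordinate change $u=\tilde g(\tilde y)zg^{-1}(y)$ reducing the fixed-point equation to $u=\iota(u^*)$, and the identification of the $+1$-eigenspace with $(1-\tilde p)\M\tilde p\oplus ip\M^h p\oplus(1-p)\M p$---into a self-contained proof. Your additional remarks on the split-submanifold argument via $P_-(x)$ and on the smoothness of the restricted groupoid maps make explicit what the paper leaves to the phrase ``Summing up the above facts''.
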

        We conclude from the Proposition \ref{real} the $\G(\M)$ is the complexification of $\U(\M)$ in sense of the definition given in \cite{bou}.

\section {Groupoids and Banach Lie-Poisson structure of $\M_*$}

In \cite{OR} it  was shown that the predual space $\M_*$ of $W^*$-algebra $\M$
has canonically defined  Lie-Poisson structure. This follows from
$ad^*(\M)$-invariance of Banach subspace $\M_*\subset\M^*$, where
$ad_x(y):=xy-yx$. One  defines the Lie-Poisson bracket of $f,g\in
C^\infty(\M_*,\mathbb{C})$ as follows \be\label{Poisbr}
\{f,g\}:=\langle\omega,[Df(\omega), Dg(\omega)]\rangle.\ee Note that Frech\'{e}t
derivatives $Df(\omega), Dg(\omega)$ belong to $\M$ which allows to take the
commutator of them. The predual space $\M_*$ as well as the Lie-Poisson
bracket (\ref{Poisbr}) is invariant with respect to the $Ad^*$-action of the
Banach group $G(\M)$.

Multiplying the right hand side of definition (\ref{Poisbr}) by $i=\sqrt{-1}$ one
obtains the Lie-Poisson bracket for real valued functions $f,g\in
C^\infty(\M_*^h, \mathbb{R})$ defined on the hermitian part $\M_*^h$ of
$\M_*$. This follows from the paring between $\M_*^h$ and the real
Banach-Lie algebra $i\M^h$ of anti-hermitian elements of $\M$ defined by
$$\M_*^h\times i\M^h\ \ni\ (\omega,x)\ \mapsto \ i\langle\omega,x\rangle\ \in \ \mathbb{R}.$$
As in the complex case the Banach Lie-Poisson structure of $\M_*^h$ is
$Ad^*(U(\M))$-invariant. The orbits of coadjoint representation of $U(\M)$ are
weak symplectic manifold. Thus they give symplectic foliation of  Banach
Lie-Poisson space $(\M^h_*,\{\cdot,\cdot\})$, see \cite{OR}.
More information concerning this interesting subject one can find in \cite{BR} and \cite{OR}.

Now let us apply the definitions of the groupoids structures on the tangent
bundle $TG$ and  cotangent bundle $T^*G$ of a Lie group $G$, e.g. see
\cite{mac}, to the case of Banach-Lie group $G(\M)$. We will do this with some
modification. Namely in our considerations we replace the cotangent bundle
$T^*G(\M)$ by the pre-cotangent bundle $T_*G(\M)$ of $G(\M)$. Note that in
the finite dimensional case the bundles $T^*G$ and $T_*G$ are canonically
isomorphic. In our case the cotangent bundle $T^*G(\M)$, opposite  to the
pre-cotangent bundle $T_*G(\M)$ does not have the symplectic structure
related to the Banach Lie-Poisson structure of $\M_*^h$ defined by
(\ref{Poisbr}).

The groupoid structure on $TG(\M)$ is defined as follows. The  base of
$TG(\M)$ is the tangent space $T_eG(\M)$ at the identity element $e\in G(\M)$.
The source map $\Ss:TG(\M)\ \to\ T_eG(\M)$ and the target map $\Tt:TG(\M)\
\to\ T_eG(\M)$ are defined as follows
\be\label{gru1}\begin{array}{l} \Ss(a):=DL_{\pi(a)^{-1}}(\pi(a))a,\\
 \\
 \Tt(a):=DR_{\pi(a)^{-1}}(\pi(a))a,\end{array}\ee
 where $a\in TG(\M)$ and $\pi:TG(\M)\to G(\M)$ is the canonical projection on the base. The identity section $\varepsilon:\ T_eG(\M) \ \rightarrow\ TG(\M)$ is done by the inclusion of the fibre $T_eG(\M)\subset TG(\M)$. The involution $\iota:\ TG(\M)\ \rightarrow\ TG(\M)$ one defines by
 \be\label{gru2} \iota(a):=DL_{\pi(a)^{-1}}(e)\circ DR_{\pi(a)^{-1}}(\pi(a))a.\ee
 Finally the groupoid product is defined by
 \be\label{gru3} ab:=DL_{\pi(a)}(\pi(b))b\ee
 if and only if $(a,b)\in TG(\M)^{(2)}$, i.e. $\Ss(a)=\Tt(b)$.

 As a base for groupoid structure of $T_*G(\M)$ we
 assume the pre-cotangent bundle $T_{*e}G(\M)$ at $e\in G(\M)$.
 The identity section  $\varepsilon_*:\  T_{*e}G(\M) \rightarrow\ T_*G(\M)$
 we define as an inclusion $T_{*e}G(\M)\subset T_{*}G(\M)$.

 Let us take $\xi\in T_{*}G(\M)$ and let $\pi_*(\xi)\in G(\M)$ be the projection of $\xi$ on the base.
 Then one defines the source and target maps as follows:
 \be\label{gru*1}\begin{array}{l}\Ss_*(\xi):=(DL_{\pi(\xi)}(e))^*\xi,\\
 \\
 \Tt_*(\xi):=(DR_{(\pi(\xi))^{}}(e))^*\xi,\end{array}.\ee
 The inversion  $\iota_*:\ T_*G(\M)\ \rightarrow\ T_*G(\M)$ is
defined by \be\label{gru*2}\iota_*(\xi):=(DL_{\pi(\xi)}(\pi(\xi))^{-1})^*\circ
(DR_{(\pi(\xi))^{}}(e))^*\xi.\ee The product of elements $\xi,\eta\in T_{*}G(\M)$ such
that $\Ss(\xi)=\Tt(\eta)$ is given by \be\label{gru*3}
\xi\eta:=(DL_{(\pi(\xi))^{-1}}(\pi(\xi)\pi(\eta)))^*\eta.\ee

The precotangent bundle $T_*G(\M)$ is the weak symplectic complex Banach manifold with the weak symplectic form defined in the following way
\be\label{symplform}
\Omega_L(g,\rho)((a, \xi), (b,\eta))=
\ee
$$=\langle \eta, DL_{g^{-1}}(g)a\rangle+\langle \xi, DL_{g^{-1}}(g)b\rangle-\langle \rho, [DL_{g^{-1}}(g)a,DL_{g^{-1}}(g)b]\rangle,$$
where $g\in G(\M)$, $a,b\in T_gG(\M)$, $\rho, \xi,\eta\in T_{*e}G(\M)$, see \cite{OR}. Thus defined weak symplectic structure is consistent with the groupoid structure of  $T_*G(\M)$ in the sense of \cite{kara}, \cite{weinstein2}. Hence one can consider $T_*G(\M)$ as a weak symplectic groupoid.

The  definition of  action groupoid  structure on the product  $G\times M$,
where $G$ is a group acting on a set $M$, one finds in Appendix D. From this
general definition one gets action groupoid structures on $G(\M)\times \M$
and $G(\M)\times\M_*$ defined by adjoint $Ad:G(\M)\to Aut \ \M$ and
co-adjoint $Ad^*:G(\M)\to Aut \ \M_*$ representation of Banach-Lie group
$G(\M)$: \be Ad_gx=gxg^{-1}\ee \be \langle
Ad^*_g\omega,x\rangle:=\langle\omega, Ad_{g^{-1}}x\rangle,\ee where $x\in
\M$ and $\omega\in \M_*$ respectively.

 The vector bundles  trivializations $\phi:\ TG(\M)\to
G(\M)\times \M$ and $\phi_*:\ T_*G(\M)\to G(\M)\times\M_*$ defined by
\be\label{fi} \phi(a):=(\pi(a),DL_{\pi(a)^{-1}}(\pi(a))a)\ee \be\label{fi*}
\phi_*(\xi):=(\pi(\xi),(DL_{\pi(\xi)}(e))^*\xi)\ee
give the canonical groupoid isomorphisms
$\phi:TG(\M)\to G(\M)\times\M$ and $\phi_*:T_*G(\M)\to G(\M)\times\M_*$. To this
end we take the identifications $T_eU(\M)\cong \M$ and $T_{*e}G(\M)\cong
\M_*$.

Let us define the  injective immersions of the groupoids $\Lambda:TG(\M)  \to\
\G(\M)*_l\M$ and
 {$\Lambda_*:T_*G(\M)  \to\ \G(\M)*_l\M_*$} by:
 \be\label{lambda} \Lambda(a):=\left(\pi(a)\ l(DL_{\pi(a)^{-1}}(\pi(a))a),DL_{\pi(a)^{-1}}(\pi(a))a\right)\ee
 \be\label{lambda*}  \Lambda_*(\xi):=\left(\pi(\xi)\ l((DL_{\pi(\xi)}(e))^*\xi),DL_{\pi(\xi)}(e)^*\xi\right)\ee
 respectively.

 In order to see that $\Lambda:TG(\M)  \to\
\G(\M)*_l\M$ commutes with source and target maps we notice that

$$(\tilde{\Ss}\circ\Lambda )(a)=\tilde{\Ss}\left(\pi(a)\ l(DL_{\pi(a)^{-1}}(\pi(a))a),DL_{\pi(a)^{-1}}(\pi(a))a\right)=\ \ \ \ \ \ \ \ \ \ \ \ \ \ \ \ \ \ \ \ $$
$$  \ \ \ \ \ \ \ \ \ \ \ \ \ \  \ \ \ \ \ \ \ \ \ \ \ \ \ \ \ \ \ \ =DL_{\pi(a)^{-1}}(\pi(a))a=(id\circ\Ss )(a),$$
$$(\tilde{\Tt}\circ \Lambda)(a)=\tilde{\Tt}\left(\pi(a)\ l(DL_{\pi(a)^{-1}}(\pi(a))a),DL_{\pi(a)^{-1}}(\pi(a))a\right)=\ \ \ \ \ \ \ \ \ \ \ \ \ \ \ \ \ \ $$
$$\ \ \ \ \ \ \ \ =Ad_{\pi(a)\ l(DL_{\pi(a)^{-1}}(\pi(a))a)}DL_{\pi(a)^{-1}}(\pi(a))a=Ad_{\pi(a)}DL_{\pi(a)^{-1}}(\pi(a))a=(id\circ\Tt)(a).$$

Since $l(DL_{\pi(a)^{-1}}(\pi(a))a)=Ad_{\pi(a)}l(DL_{\pi(b)^{-1}}(\pi(b))b)$ the following shows that $\Lambda$ preserves also the groupoid product
$$\Lambda(a)\Lambda(b)=\ \ \ \  \ \ \ \ \ \ \ \ \ \  \ \ \ \ \ \ \ \ \ \ \ \ \ \ \ \ \ \ \ \ \ \  \ \ \ \ \ \ \ \ \ \ \ \ \ \ \ \ \ $$ $$=\left(\pi(a)\ l(DL_{\pi(a)^{-1}}(\pi(a))a),DL_{\pi(a)^{-1}}(\pi(a))a\right)
\left(\pi(b)\ l(DL_{\pi(b)^{-1}}(\pi(b))b),DL_{\pi(b)^{-1}}(\pi(b))b\right)=$$
$$=\left((\pi(a)\ l(DL_{\pi(a)^{-1}}(\pi(a))a)(\pi(b))\ l(DL_{\pi(b)^{-1}}(\pi(b))b),DL_{\pi(b)^{-1}}(\pi(b))b\right)=$$
$$ \ \ \ \ \ \ \ \ \ \  \ \ \ \ \ \ \ \ \ \ =\Lambda(DL_{\pi(a)}(\pi(b))b)=\Lambda(ab).$$

Summarizing the  above facts we obtain the following groupoid monomorphism
 \be\label{duzy}\begin{picture}(11,4.6)
\put(1,4){\makebox(0,0){$TG(\M)$}}
    \put(8,4){\makebox(0,0){$\G(\M)*_l\M$}}
    \put(1,-1){\makebox(0,0){$\M$}}
    \put(8,-1){\makebox(0,0){$\M$}}
    \put(1.2,3){\vector(0,-1){3}}
    \put(0.7,3){\vector(0,-1){3}}
    \put(8.2,3){\vector(0,-1){3}}
    \put(7.7,3){\vector(0,-1){3}}
    \put(2.5,4){\vector(1,0){3.7}}
    \put(2,-1){\vector(1,0){4.9}}
    \put(-0.2,1.4){\makebox(0,0){$\Ss$}}
    \put(2.2,1.4){\makebox(0,0){$\Tt$}}
    \put(9.2,1.4){\makebox(0,0){$\tilde{\Tt}$}}
    \put(6.8,1.4){\makebox(0,0){$\tilde{\Ss}$}}
    \put(4.5,4.6){\makebox(0,0){$\Lambda$}}
    \put(4.5,-0.5){\makebox(0,0){$id$}}
      \end{picture}.\ee

\bigskip

In a similar way we obtain the monomorphism
\be\label{duzy*}\begin{picture}(11,4.6)
    \put(1,4){\makebox(0,0){$T_*G(\M)$}}
    \put(8,4){\makebox(0,0){$\G(\M)*_l\M_*$}}
    \put(1,-1){\makebox(0,0){$\M_*$}}
    \put(8,-1){\makebox(0,0){$\M_*$}}
    \put(1.2,3){\vector(0,-1){3}}
    \put(0.7,3){\vector(0,-1){3}}
    \put(8.2,3){\vector(0,-1){3}}
    \put(7.7,3){\vector(0,-1){3}}
    \put(2.5,4){\vector(1,0){3.5}}
    \put(2,-1){\vector(1,0){4.9}}
    \put(-0.2,1.4){\makebox(0,0){$\Ss$}}
    \put(2.2,1.4){\makebox(0,0){$\Tt$}}
    \put(9.2,1.4){\makebox(0,0){$\tilde{\Tt}$}}
    \put(6.8,1.4){\makebox(0,0){$\tilde{\Ss}$}}
    \put(4.5,4.5){\makebox(0,0){$\Lambda_*$}}
    \put(4.5,-0.5){\makebox(0,0){$id$}}
    \end{picture}\ee
    \bigskip
    \\
    of the predual groupoids $T_*\G(\M)\rightrightarrows\M_*$ into the action groupoid $\G(\M)*_l\M_*\rightrightarrows\M_*$

Now instead of complex Banach-Lie group $G(\M)$ let us consider  the groupoid of partially invertible elements $\G(\M)$. In this case we come to the following statements.

The tangent prolongation $T\G(\M)\too T\mathcal{L}(\M)$ of the groupoid $\G(\M)\too\mathcal{L}(\M)$ is a Banach Lie $\mathcal{VB}$-groupoid, (see e.g. \cite{mac2}), i.e. one has
\unitlength=5mm
\be\label{VB}\begin{picture}(11,4.6)
    \put(1,4){\makebox(0,0){$T\G(\M)$}}
    \put(8,4){\makebox(0,0){$\G(\M)$}}
    \put(1,-1){\makebox(0,0){$T\mathcal{L}(\M)$}}
    \put(8,-1){\makebox(0,0){$\mathcal{L}(\M)$}}
    \put(1.2,3){\vector(0,-1){3}}
    \put(0.7,3){\vector(0,-1){3}}
    \put(8.2,3){\vector(0,-1){3}}
    \put(7.7,3){\vector(0,-1){3}}
    \put(2.5,4){\vector(1,0){4}}
    \put(2.2,-1){\vector(1,0){4.5}}
    \put(-0.2,1.4){\makebox(0,0){$D\Ss$}}
    \put(2.2,1.4){\makebox(0,0){$D\Tt$}}
    \put(9.2,1.4){\makebox(0,0){$\Tt$}}
    \put(6.8,1.4){\makebox(0,0){$\Ss$}}
    \put(4.5,4.6){\makebox(0,0){$\tilde{q}$}}
    \put(4.5,-0.5){\makebox(0,0){$q$}}
    \end{picture}\ee
    \bigskip
    \\
where the bundle vector projections $q$ and  $\tilde{q}$ define the groupoid morphism, the tangent maps $D\Ss, D\Tt, D\iota, D\varepsilon $  are vector bundle morphisms, and the map $$(Dq,D\Ss):T\G(\M)\to \G(\M)*T\mathcal{L}(\M)$$  of tangent groupoid $T\G(\M)$ on the action groupoid $\G(\M)*T\mathcal{L}(\M)$ is a surjective submersion.

The core of $T\G(\M)$ is the algebroid $\A\G(\M)$ of the groupoid $\G(\M)$. See e.g.  \cite{mac2} for the definition of the core of $\mathcal{VB}$-groupoid. The algebroid $\A\G(\M)$ and its predual $\A_*\G(\M)$  are most crucial for the Poisson aspect of the investigated theory. Namely, extending the considerations from the finite dimensional case to the Banach-Lie context we obtain the Banach-Lie  $\mathcal{VB}$-groupoid

\unitlength=5mm
\be\label{VB*}\begin{picture}(11,4.6)
    \put(1,4){\makebox(0,0){$T_*\G(\M)$}}
    \put(8,4){\makebox(0,0){$\G(\M)$}}
    \put(1,-1){\makebox(0,0){$\A_*(\M)$}}
    \put(8,-1){\makebox(0,0){$\mathcal{L}(\M)$}}
    \put(1.2,3){\vector(0,-1){3}}
    \put(0.7,3){\vector(0,-1){3}}
    \put(8.2,3){\vector(0,-1){3}}
    \put(7.7,3){\vector(0,-1){3}}
    \put(2.5,4){\vector(1,0){4}}
    \put(2,-1){\vector(1,0){4.7}}
    \put(-0.2,1.4){\makebox(0,0){$\tilde{\Ss}$}}
    \put(2.2,1.4){\makebox(0,0){$\tilde{\Tt}$}}
    \put(9.2,1.4){\makebox(0,0){$\Tt$}}
    \put(6.8,1.4){\makebox(0,0){$\Ss$}}
    \put(4.5,4.6){\makebox(0,0){$\tilde{q}_*$}}
    \put(4.5,-0.5){\makebox(0,0){$q_*$}}
    \end{picture}\ee
    \bigskip
    \\
precotangent to the one presented in (\ref{VB}), where $q_*$ and $\tilde{q}_*$ are the projections on  the base. One defines the source $\tilde\Ss$ and target $\tilde\Tt$ maps in (\ref{VB*}) as follows. Let $\phi\in T_{*x}\G(\M)$ and $x\in \A_p\G(\M)$, $p\in \mathcal{L}(\M)$, then

\be \langle \tilde\Ss(\phi),x\rangle:=\langle\phi, DL_g(\varepsilon(p))(x-D\varepsilon(p)D\Tt(\varepsilon(p))x)\rangle,
\ee
\be\langle\tilde\Tt(\phi)\rangle:=\langle\phi,DR_g(\varepsilon(p))x\rangle.
\ee
The product $\phi\bullet\psi$ of $\phi\in T_{*x}\G(\M)$ and $\psi\in T_{*y}\G(\M)$, where $\tilde\Ss(\phi)=\tilde\Tt(\psi)\in\A_{*p}\G(\M)$ and $\Ss(x)=\Tt(y)=p\in\mathcal{L}(\M)$, one defines by

\be\langle\phi\bullet\psi, \xi\cdot\eta\rangle=\langle\phi, \xi\rangle+\langle\psi,\eta\rangle,\ee
where $\xi\in T_x\G(\M)$, $\eta\in T_y\G(\M)$ satisfy $D\Ss(\xi)=D\Tt(\eta)$ and $\xi\cdot\eta\in T_{xy}\G(\M)$ is the product of $\xi$ and $\eta$ in the tangent groupoid $T\G(\M)$. The above definitions we obtain as a direct generalization of those accepted in the finite dimensional case, e.g. \cite{mac2}.

The groupoid $T_*\G(\M)\rightrightarrows\A_*\G(\M)$ is a weak symplectic Banach-Lie realization of the Banach-Poisson bundle $ \A_*\G(\M)$, which Poisson structure is determined by the algebroid structure of $\A\G(\M)$. We note here that diagram (\ref{VB*}) is the groupoid version of the diagram
\unitlength=5mm
\be\label{VBgr}\begin{picture}(11,4.6)
    \put(1,4){\makebox(0,0){$T_*G(\M)$}}
    \put(8,4){\makebox(0,0){$G(\M)$}}
    \put(1,-1){\makebox(0,0){$\M_*$}}
    \put(8,-1){\makebox(0,0){$\{\mathbf{1}\}$}}
    \put(1.2,3){\vector(0,-1){3}}
    \put(0.7,3){\vector(0,-1){3}}
    \put(8.2,3){\vector(0,-1){3}}
    \put(7.7,3){\vector(0,-1){3}}
    \put(2.5,4){\vector(1,0){4}}
    \put(2,-1){\vector(1,0){4.7}}
    \put(-0.2,1.4){\makebox(0,0){$\tilde{\Ss}$}}
    \put(2.2,1.4){\makebox(0,0){$\tilde{\Tt}$}}
    \put(9.2,1.4){\makebox(0,0){$\Tt$}}
    \put(6.8,1.4){\makebox(0,0){$\Ss$}}
    \put(4.5,4.6){\makebox(0,0){$\tilde{q_*}$}}
    \put(4.5,-0.5){\makebox(0,0){$q_*$}}
    \end{picture}\ee
    \bigskip
    \\
    valid for the group $G(\M)$.

The proofs of these statements are the direct generalizations of the proofs for the finite dimensional case to the context of the Banach-Lie groupoids theory.

Finally let us mention that all objects considered above belong to the category of complex analytic Banach manifold. They have their real analytic counterparts if we replace  the group $G(\M)$ and the groupoid $\G(\M)$ by $U(\M)$ and $\U(\M)$ respectively, and $\M$ ($\M_*$) by $i\M^h$ ($\M_*^h$).

\bigskip

Authors apologize for the fact that subjects  discussed in this section are treated in the abbreviated way. However, the detailed investigation of Banach -Lie Poisson geometry related to $W^*$-algebras needs longer treatment in a separate paper, which is currently in preparation.

\section {Appendix}

{\large\textbf{A \ Groupoid \ }}\\
 Let us recall that a \textbf{groupoid with the base set $B$} (set of objects) is a set $G$ such that:
\ben[i)]
\item
there is a pair of maps $$\bfig \Atriangle/>`>`/[G` {B}` {B};\operatorname{\Ss}`\operatorname{\Tt}`] \efig $$  called {\bf source} and {\bf target} map respectively;
\item
for set of composable pairs  $$G^{(2)}:=\{(g,h)\in G\times G; \ \ \ {\Ss}(g)={\Tt}(h)\}$$ one has a  \textbf{product map} $m:G^{(2)}\rightarrow G$, denoted by
\be\label{prod-map}m(g,h)=:gh\ee such that
\ben[(a)]
\item ${\Ss}(gh)={\Ss}(h),\ \ \ {\Tt}(gh)={\Tt}(g),$
\item associativity: $k(gh)=(kg)h;$
\een
\item
there is an  iniection  $\varepsilon:{B}\rightarrow G$ called the \textbf{identity section}, such that $$\varepsilon({\Tt}(g))g=g=g\varepsilon({\Ss}(g));$$
\item
there exists an \textbf{inversion} $\iota:G\rightarrow  G$ denoted by
\be\label{inv-map}\iota(g)=:g^{-1},\ee
 such that $$\iota(g)g=\varepsilon({\Ss}(g)),\ \ \ \ \  g\iota(g)=\varepsilon ({\Tt}(g))$$
for all  $g\in G.$
\een
A groupoid $G$ gives rise to a hierarchy of sets\\
$G^{(0)}:=\varepsilon({B})\simeq ({B})$\\
$G^{(1)}:=G$\\
$G^{(2)}:=\{(g,h)\in G\times G; \ \ \ {\Ss}(g)={\Tt}(h)\}$\\
$\vdots$\\
$G^{(k)}=\{(g_1,g_2,...,g_k)\in G\times G\times...\times G;\ \ {\Tt}(g_i)={\Ss}(g_{i-1}),\ \ i=2,3,...,k\}$

In the paper we will consider the topological (differentiable) groupoids.
Because of this let us recall that the groupoid $G$ is called a topological (differentiable) groupoid if $G$ and $B$ have the topologies (differential manifold structure) such that: \ben[i)]
\item
the product map (\ref{prod-map}) and the involution (\ref{inv-map}) are
continuous (differentiable);
\item
the injection $\varepsilon:{B}\rightarrow G$ is an embedding (differentiable
embedding). \een

From ${\Ss}(g)=\varepsilon^{-1}(gg^{-1})$ and
${\Tt}(g)=\varepsilon^{-1}(g^{-1}g)$ it follows that source map and target map
are continuous (differentiable). By definition the topology of $G^{(k)}$, for
$k=0,1,2,...$, is inherited from $G$. In case of differentiable groupoid one
assumes that the source and target maps are submersions.\\
\\
{\large\textbf{B \ Groupoids morphism\ }}\\
A \textbf{morphism} $\phi$ of two groupoids $G_1$ and $G_2$ over bases $B_1$ and $B_2$ one can depict by the following commutative diagram
\unitlength=5mm
\be\label{morph}\begin{picture}(11,4.6)
    \put(1,4){\makebox(0,0){$G_1$}}
    \put(8,4){\makebox(0,0){$G_2$}}
    \put(1,-1){\makebox(0,0){$B_1$}}
    \put(8,-1){\makebox(0,0){$B_2$}}
    \put(1.2,3){\vector(0,-1){3}}
    \put(0.7,3){\vector(0,-1){3}}
    \put(8.2,3){\vector(0,-1){3}}
    \put(7.7,3){\vector(0,-1){3}}
    \put(2.5,4){\vector(1,0){4}}
    \put(2,-1){\vector(1,0){4.7}}
    \put(-0.2,1.4){\makebox(0,0){$\Ss_1$}}
    \put(2.2,1.4){\makebox(0,0){$\Tt_1$}}
    \put(9.2,1.4){\makebox(0,0){$\Tt_2$}}
    \put(6.8,1.4){\makebox(0,0){$\Ss_2$}}
    \put(4.5,4.6){\makebox(0,0){$\phi_G$}}
    \put(4.5,-0.5){\makebox(0,0){$\phi_B$}}
    \end{picture}\ee
\bigskip
\\
By definition one also has
$$\phi_B\circ\Ss_1=\Ss_2\circ\phi_G\ \ \ {\rm\ and}\ \ \phi_B\circ\Tt_1=\Tt_2\circ\phi_G$$
and $$\phi_G(g)\phi_G(h)=\phi_G(gh)$$ for $gh\in G_1^{(2)}$.
If $\phi_G:G_1\hookrightarrow G_2$ and $\phi_B:B_1\hookrightarrow B_2$ are inclusion maps one says that $G_1$ is \textbf{subgroupoid} of $G_2$. The subgroupoid $G_1\subset G_2$ is a \textbf{wide subgroupoid} of $G_2$ if $\Ss_1(G_1)=\Tt_1(G_1)=B_2$.

An example of groupoid morphism is given by
\unitlength=5mm
\be\begin{picture}(11,4.6)
    \put(1,4){\makebox(0,0){$G$}}
    \put(8,4){\makebox(0,0){$B\times B$}}
    \put(1,-1){\makebox(0,0){$B$}}
    \put(8,-1){\makebox(0,0){$B$}}
    \put(1.2,3){\vector(0,-1){3}}
    \put(0.7,3){\vector(0,-1){3}}
    \put(8.2,3){\vector(0,-1){3}}
    \put(7.7,3){\vector(0,-1){3}}
    \put(2.5,4){\vector(1,0){4}}
    \put(2,-1){\vector(1,0){4.7}}
    \put(-0.2,1.4){\makebox(0,0){$\Ss$}}
    \put(2.2,1.4){\makebox(0,0){$\Tt$}}
    \put(9.2,1.4){\makebox(0,0){$pr_2$}}
    \put(6.8,1.4){\makebox(0,0){$pr_1$}}
    \put(4.5,4.6){\makebox(0,0){$(\Ss,\Tt)$}}
    \put(4.5,-0.5){\makebox(0,0){$id$}}
    \end{picture}\ee
\bigskip
\\
where $B\times B$ is the pair groupoid, i.e. $\Ss:=pr_1$, $\ \Tt:=pr_2$, $\ \iota(x,y):=(y,x)$, $\ \varepsilon(x)=(x,x)$ and $\ m((y,z),(x,y))=(x,z)$.\\
 \\
{\large\textbf{C \ Action of groupoid \ }}\\
We recall the definition of the \textbf{ left action of groupoid $G$ on the set $M$}. One assumes for this reason that there exists a map (moment map)
\be \mu: M \rightarrow B\ee
and one defines the space
\be G*_l M:=\{(g,r)\in G\times M:\ \ \ \Ss(g)=\mu (r)\}.\ee
Then the left action of groupoid $G$  on M is defined as a map $G*M \ni (g,r)\mapsto g\cdot r\in M$ with properties:
\be\label{agl}
\begin{array}{l}
(gh)\cdot r=g\cdot(h\cdot r)\\
\mu(g\cdot r)={\Tt}(g)\\
\varepsilon(\mu(r))\cdot r=r.\end{array}\ee
For the \textbf{right action of $G$ on $M$} instead of (\ref{agl}) we have
\be\label{agr}
\begin{array}{l}
 r\cdot(gh)=(r\cdot h)\cdot g\\
 \mu(r\cdot g)={\Ss}(g)\\
 r\cdot\varepsilon(\mu(r))=r,\end{array}\ee
where $(g,r)\in G*_r M:=\{(g,r)\in G\times M:\ \ \ \Tt(g)=\mu (r)\}.$

As an example let us take the canonical left action of $G$ on its base $B$. In this case $M:=B$, $\ \mu:=id$ and \be G*B=\{(g,x):\ \ x={\Ss}(g)\}\ee
The action map is defined by
\be G*B\ \ni\ (g,x)\ \mapsto\ g\cdot x:={\Tt}(g).\ee
The defining properties (\ref{agl}) follow from the corresponding properties of the maps $\Ss,\Tt,\ \varepsilon$ and the product map (\ref{prod-map}).

One equips the set   $\tilde G:=G*M$ with the groupoid structure defined as follows:
\ben[i)]
\item
source map and target map are given by $\tilde \Ss(g,r):=r\in M\ $ and $\ \tilde \Tt(g,r):=g\cdot r\ \in M$;
\item
the set of composable pairs $$\tilde G^{(2)}:=\{((g,r),(h,n))\in \tilde G\times \tilde G;\ \ \Tt(h)=\Ss(g)\}$$ and the product map $\tilde m:\tilde G^{(2)}\rightarrow \tilde G$ is defined as
\be\label{*prod}\tilde m((g,r),(h,n))=(gh,n);\ee
\item
the identity section  $\tilde \varepsilon:M\rightarrow \tilde G$ is defined by
\be\label{*id} \tilde \varepsilon(r)=(\varepsilon(\mu(r)),r);\ee
\item
the involution $\tilde \iota:\tilde G\rightarrow\tilde G$  is defined by
\be\label{*inv} \tilde\iota(g,r)=(\iota(g),g\cdot r).\ee
\een
In the case when $G$ is a topological groupoid and $M$ is a topological space we obtain on $\tilde G$ the structure of the topological groupoid if the moment map $\mu$ and the action $G$ on $M$ are continuous. The topological structure of $\tilde G\subset G\times M$
is inherited from product topology of $G\times M$.

One calls the morphism depicted in (\ref{morph}) a \textbf{covering morphism} if for each $x\in B_1$ the restriction $\phi_G:\Ss^{-1}(x)\ \rightarrow\ \Ss^{-1}(\phi_B(x))$ of $\phi_G$ to the $\Ss$-level of $x$ is bijection.

The diagram
\unitlength=5mm
\be\label{cov morph}\begin{picture}(11,4.6)
    \put(1,4){\makebox(0,0){$G*_l M$}}
    \put(8,4){\makebox(0,0){$G$}}
    \put(1,-1){\makebox(0,0){$M$}}
    \put(8,-1){\makebox(0,0){$B$}}
    \put(1.2,3){\vector(0,-1){3}}
    \put(0.7,3){\vector(0,-1){3}}
    \put(8.2,3){\vector(0,-1){3}}
    \put(7.7,3){\vector(0,-1){3}}
    \put(2.5,4){\vector(1,0){4}}
    \put(2,-1){\vector(1,0){4.7}}
    \put(-0.2,1.4){\makebox(0,0){$\tilde \Ss$}}
    \put(2.2,1.4){\makebox(0,0){$\tilde \Tt$}}
    \put(9.2,1.4){\makebox(0,0){$\Tt$}}
    \put(6.8,1.4){\makebox(0,0){$\Ss$}}
    \put(4.5,4.6){\makebox(0,0){$pr_1$}}
    \put(4.5,-0.5){\makebox(0,0){$\mu$}}
    \end{picture}\ee
\bigskip
\\
where $$\phi_G(g,r):=pr_1(g,r)=g \ \ \ {\rm and}\ \ \ \phi_B(r):=\mu(r), $$ gives an example of the covering morphism of groupoids.\\
\\
{\large\textbf{D \ Action groupoid \ }}\\
If a group $G$ acts on a set $M$
$$G\times M\  \ni\ (g,m)\  \mapsto\ g\cdot m\ \in\ M\ $$
 one can define on the set $G\times M$ the groupoid structure, which is called \textbf{action groupoid} structure.
For this case one defines \ben[i)]
\item
source  and target maps $\Ss', \ \Tt':G\times M\to M$ as
\be\label{'st}\Ss'(g,m):=m\in M\ \quad{\rm and }\quad  \Tt(g,m):=g\cdot m\ ;\ee
\item the groupoid product \be\label{'prod}(g,m)(h,n):=(gh,n)\ee
on the set of composable pairs $$(G\times M) ^{(2)}:=\{((g,m),(h,n))\in (G\times M)\times (G\times M):\ \ m=h\cdot n\}$$
\item
the identity section  $\varepsilon':M\rightarrow G\times M$ by
\be\label{'id}\varepsilon'(m)=(e,m);\ee
\item
the involution $\iota':G\times M \rightarrow G\times M$   by \be\label{'inv}
\iota'(g,m)=(g^{-1},g\cdot m).\ee \een

{\large\textbf{E\ Bisections of groupoid\ }}\\
By a \textbf{bisection } of a groupoid $G$ one understands a subset $\sigma$  such that the restrictions $\Ss:\sigma\to {B}$ and  $\Tt:\sigma\to {B}$ of the source map  and target map to $\sigma$ are bijections of $\sigma$ on $B$. The set $\mathfrak{B}(G)$ of all bisections of the groupoid $G$ form  the group if one defines the product of two bisections $\sigma_1$ and $\sigma_2$ as follows:
\be
\sigma_1\circ \sigma_2:=\{gh\in G;\ \ (g,h)\in (\sigma_1\times \sigma_2)\ \cap\ G^{(2)}\}
\ee
The identity element of the \textbf{bisection group} $(\mathfrak{B}(G),\circ)$ is just the identity section ${B}\cong G^{(0)}$. The map $$\mathfrak{B}(G)\ \ni\ \sigma\ \to\ (\Tt|_\sigma)\circ(\Ss|_\sigma)^{-1}\ \in\ Bij\ {B}$$ is the group monomorphism. Therefore one can consider $\mathfrak{B}(G)$ as a subgroup of the group $Bij\ {B}$ of bijections of ${B}$.

A \textbf{local bisection} of a groupoid $G$ is a subset $\sigma\subset G$ such that $\Ss:\sigma\to {B}$ and $\Tt:\sigma\to {B}$ are injections on $\sigma$ into ${B}$. The maps
\be \Tt\circ \Ss^{-1}:\ \Ss(\sigma)\to \Tt(\sigma) \;\;{\rm and}  \;\;\Ss\circ \Tt^{-1}:\ \Tt(\sigma)\to \Ss(\sigma)\ee
are \textbf{partial bijections} of ${B}$. They define the inverse subsemigroup $\mathfrak{B}_{loc}(G)\subset B_{part}({B})$ of the inverse semigroup $B_{part}({B})$ of all partial bijections of ${B}$. One calls $\mathfrak{B}_{loc}(G)$ the inverse semigroup of local bisections.  Let us recall that the semigroup product in $B_{part}({B})$ is just the superposition of partial bijections.

Let us remark a motivation to use terms "bisection" and "local bisection" is that $(\Ss|_\sigma)^{-1}:{B}\to \sigma\subset G$ ($(\Tt|_\sigma)^{-1}:{B}\to \sigma\subset G$) and $(\Ss|_\sigma)^{-1}:\Ss(\sigma)\to \sigma\subset G$  ($(\Tt|_\sigma)^{-1}:\Tt(\sigma)\to \sigma\subset G$) are section and local section of the bundle $\Ss:G\to{B}$ ($\Tt:G\to {B}$).\\

\begin{center} ACKNOWLEDGEMENTS\end{center}

We thank Kirill Mackenzie for inspiring discussions  within the Lie $\mathcal{VB}$-groupoids and Lie algebroids. We also thank Daniel Beltita for useful remarks which made the paper transparent.

\end{document}